\documentclass{amsart}
\usepackage{mathrsfs,latexsym,amsfonts,amssymb}

\newtheorem{theorem}{Theorem}[section]
\newtheorem{lemma}[theorem]{Lemma}
\newtheorem{corollary}[theorem]{Corollary}
\newtheorem{question}[theorem]{Question}
\theoremstyle{definition}
\newtheorem{definition}[theorem]{Definition}
\newtheorem{problem}[theorem]{Problem}
\newtheorem{proposition}[theorem]{Proposition}
\theoremstyle{remark}

\newtheorem{example}[theorem]{Example}

\begin{document}
\title[Suitable sets for topological groups revisited]
{Suitable sets for topological groups revisited}

\author{Fucai Lin*}
\address{(Fucai Lin): School of mathematics and statistics,
Zhangzhou Normal University, Zhangzhou 363000, P. R. China}
\email{linfucai2008@aliyun.com; linfucai@mnnu.edu.cn}

\author{Jiamin He}
\address{(Jiamin He): School of mathematics and statistics, Minnan Normal University, Zhangzhou 363000, P. R. China}
\email{hjm1492539828@163.com}

\author{Jiajia Yang}
\address{(Jiajia Yang): School of mathematics and statistics, Minnan Normal University, Zhangzhou 363000, P. R. China}
\email{3561333253@qq.com}

\author{Chuan Liu}
\address{(Chuan Liu): Department of Mathematics,
Ohio University Zanesville Campus, Zanesville, OH 43701, USA}
\email{liuc1@ohio.edu}

\thanks{The authors are supported by Fujian Provincial Natural Science Foundation of China (No: 2024J02022) and the NSFC (Nos. 11571158).}

\keywords{suitable set; topological group; free topological group; linearly orderable topological group; $\omega^{\omega}$-base; separable space; $k$-space.}
\subjclass[2000]{22A15, 54F05, 54H11, 54H99}

\begin{abstract}
A  discrete subset $S$ of a topological group $G$ is called a {\it suitable set} for $G$ if $S\cup \{e\}$ is closed in $G$ and the subgroup generated by $S$ is dense in $G$, where $e$ is the neutral element of $G$. In this paper, the existence of suitable sets in topological groups is studied. It is proved that, for a non-separable $k_{\omega}$-space $X$ without
non-trivial convergent sequences, the $snf$-countability of $A(X)$ implies that $A(X)$ does not have a suitable set, which gives a partial answer to \cite[Problem 2.1]{TKA1997}. Moreover, the existence of suitable sets in some particular classes of linearly orderable topological groups is considered, where Theorem~\ref{t4}
provides an affirmative answer to \cite[Problem 4.3]{ST2002}. Then, topological groups with an $\omega^{\omega}$-base are discussed, and every linearly orderable topological group with an $\omega^{\omega}$-base being metrizable is proved; thus it
has a suitable set. Further, it follows that each topological group $G$ with an $\omega^{\omega}$-base has a suitable set whenever $G$ is a $k$-space,
which gives a generalization of a well-known result in \cite{CM}. Finally, some cardinal invariant of topological groups with a suitable set are provided. Some results of this paper give some partial answers to some open problems posed in~\cite{DTA} and~\cite{TKA1997} respectively.
\end{abstract}

\maketitle

\section{Introduction}
Suitable sets were considered in the contexts of Galois cohomology and free profinite groups by Tate (see~\cite{Doa}) and Mel'nikov~\cite{Mel} respectively. Later Hofmann
and Morris promoted the concept of suitable set in their
well-known monograph on compact groups and in their seminal paper~\cite{HM1990}.
Thereafter many topologists studied suitable sets in topological groups,
see, for instance, ~\cite{CG-F}, \cite{CM}, \cite{DTT} and \cite{TKA1997}.
Fundamental results were obtained by Comfort et al. in~\cite{CM}
and Dikranjan et al. in~\cite{DTT} and~\cite{DTT2000}.
Many examples of topological groups with or without suitable sets were constructed in~\cite{TKA1997}.

A subset $S$ of a topological group $G$ is a {\it suitable set} for $G$, if
$S$ is a discrete subset of $G$, $S\cup \{e\}$ is closed, and
the subgroup $\langle S\rangle$ of $G$ generated by $S$ is dense in $G$, see~\cite{HM1990}.
Let $\mathscr{S}$ (respectively, $\mathscr{S}_{c}$) be the class of topological groups $G$
having a suitable (respectively, closed suitable) set.
It turns out that very often a suitable set of a group $G$ generates $G$.
This fact suggests to devote a special attention to classes
$\mathscr{S}_g$ (respectively, $\mathscr{S}_{cg}$) of topological groups $G$
having a suitable (respectively, closed suitable) set which generates $G$.

In this paper, we mainly consider the following three open problems, which were posed more than twenty years ago. We shall give some partial answers to Problems~\ref{pr0} and~\ref{pr1}, and an affirmative answer to Problem~\ref{pr6}.

\begin{problem}\cite[Problem 2.1]{TKA1997}\label{pr0}
Let $X$ be a non-separable compact space without
non-trivial convergent sequences. Is it true that $A(X)\not\in\mathscr{S}$?
\end{problem}

\begin{problem}\cite[Problem 1.5]{TKA1997}\label{pr1}
Does every linearly orderable topological group
have a (closed) suitable set?
\end{problem}

Let the direct product $\prod_{\alpha<\tau}G_{\alpha}$ endowed with the Tychonoff topology, and let $G$ the $\sigma$-product of $\prod_{\alpha<\tau}G_{\alpha}$ which inherits the subspace group topology $\mathscr{F}_{G}$, where each $G_{\alpha}$ is a non-trivial discrete group. For each $g=(g_{\alpha})_{\alpha<\tau}\in \prod_{\alpha<\tau}G_{\alpha}$, we denoted the set $\{g_{\alpha}: g_{\alpha}\neq e_{\alpha}, \alpha<\tau\}$ by $Proj(g)$.

In \cite{ST2002}, M. Sanchis and M. Tkachenko posed the following open problem.

\begin{problem}\cite[Problem 4.3]{ST2002}\label{pr6}
It is true that every subgroup of $(G, \mathscr{F}_{G})$ has a (generating) suitable
set?
\end{problem}

This paper is organized as follows. In Section 2, we introduce the
necessary notations and terminology which are used in
the paper. In Section 3, we mainly give a partial answer to Problem~\ref{pr0}, and consider some particular class of free topological groups with a suitable set. In Section 4, we mainly provide some partial answer to Problem~\ref{pr1}, and give an affirmative answer to Problem~\ref{pr6}. In Section 5, we prove that, for any topological group $G$ with an $\omega^{\omega}$-base, if $G$ is a $k$-space then it has a suitable base. Moreover, we show that every linearly orderable topological group with an $\omega^{\omega}$-base is metrizable; thus it
has a suitable set. In Section 6, we mainly consider the the density and cardinality of a closed suitable set in a topological group.
\maketitle

\section{Preliminaries}
All spaces are Tychonoff unless stated otherwise. We denote by $\omega$ and $\mathbb{N}$ the first infinite ordinal and the set of all natural
numbers respectively. Given a group $G$, let $e_G$ denote the neutral element of $G$; if no
  confusion occurs, we simply use $e$ instead of $e_G$ to denote the neutral
  element of $G$. Readers may refer
\cite{A2008, G1984} for notations and terminology not
explicitly given here.

Let $F(X)$ be the free topological
group and $A(X)$ be the free Abelian topological group over a
space $X$ respectively. Moreover, We always use $G(X)$ to denote
  topological groups $F(X)$ or $A(X)$. Each element $g\in F(X)$ distinct from the neutral element can be uniquely written
in the form $g=x_{1}^{r_{1}}x_{2}^{r_{2}}\cdots x_{n}^{r_{n}}$, where
$n\geq 1$, $r_{i}\in\mathbb{Z}\setminus\{0\}$, $x_{i}\in X$, and
$x_{i}\neq x_{i+1}$ for each $i=1, \cdots, n-1$, and the {\it support}
of $g=x_{1}^{r_{1}} x_{2}^{r_{2}}\cdots x_{n}^{r_{n}}$ is defined as
$\mbox{supp}(g) :=\{x_{1}, \cdots, x_{n}\}$. Given a subset $K$ of
$F(X)$, we put $\mbox{supp}(K):=\bigcup_{g\in K}\mbox{supp}(g)$. Similar
assertions (with the obvious changes for commutativity) are valid for
$A(X)$.

\begin{definition}
Let $\mathscr P$ be a cover of a space $X$ such that (i) $\mathscr P =
  \bigcup_{x\in X}\mathscr{P}_{x}$; (ii) for each point $x\in X$, if $U,V\in
  \mathscr{P}_{x}$, then $W\subseteq U\cap V$ for some $W\in \mathscr{P}_{x}$;
  and (iii) for each point $x\in X$ and each open neighborhood $U$ of $x$ there
  is some $P\in\mathscr P_x$ such that $x\in P \subseteq U$.

$\bullet$  Then, $\mathscr P$
  is called an \emph{sn-network} \cite{Lin1996} for $X$ if, for each point $x\in X$,
  each element of $\mathscr P_x$ is a sequential neighborhood of $x$ in $X$.

$\bullet$  $X$ is called \emph{snf-countable} \cite{Lin1996} if $X$ has an $sn$-network
  $\mathscr P$ such that $\mathscr P_x$ is countable for any $x\in X$.
  \end{definition}

\begin{definition}
Let $X$ be a space.

$\bullet$ The space $X$ is a \emph{$k$-space} provided that a subset $C\subseteq X$ is closed in $X$ if
  $C\cap K$ is closed in $K$ for each compact subset $K$ of $X$.

$\bullet$  The space $X$ is called a {\it $k_{\omega}$-space} if there exists a family of countably many compact subsets $\{K_{n}: n\in\mathbb{N}\}$ of $X$ such that each subset $F$ of $X$ is closed in $X$ provided that $F\cap K_{n}$ is closed in $K_{n}$ for each $n\in\mathbb{N}$. Clearly, each $k_{\omega}$-space is a $k$-space.

$\bullet$ The density $d(X)$ of $X$ is defined as the smallest cardinal number of the form $|A|$ for each dense subset $A$ of $X$. If $d(X)\leq\omega$, then we say that $X$ is {\it separable}.

$\bullet$ The space $X$ is said to be {\it Lindel\"{o}f} if each open cover of $X$ has a countable subcover.

$\bullet$ The {\it pseudocharacter $\psi(X)$} of $X$ is the smallest
infinite cardinal $\kappa$ such that any point of $X$ is an intersection of at most $\kappa$
open subsets of $X$.

$\bullet$ The {\it extent $e(X)$}
is the supremum of cardinalities of closed discrete subspaces of $X$.

\end{definition}

\begin{definition} Let $X$ be space. Then

$\bullet$ A family $\mathcal{P}$ of subsets of
$X$ is called a {\it network} for $X$ if for each $x\in X$ and neighborhood $U$ of $x$ there exists $P\in \mathcal{P}$ such that $x\in P\subset U$.

$\bullet$ Recall that $X$ is a {\it $\sigma$-space} if it is regular and has a $\sigma$-locally finite network.
\end{definition}

\begin{definition}
	A topological space $(X, \mathcal{T})$ is called \emph{linearly orderable} if there exists a total order ``$\le$'' on $X$ for which the set of all open rays
	\[
	\{(a, +\infty) \mid a \in X\} \cup \{(-\infty, b) \mid b \in X\}
	\]
   forms a subbase for the topology $\mathcal{T}$, where $(a,+\infty)=\{x\in X:x>a\}$ and $(-\infty,b)=\{x\in X:x<b\}$.
\end{definition}

A topological group $(G,\mathcal{T})$ is called to be  {\it topologically orderable group} if there is a total order ``$\le$'' on $G$ such that the order topology induced by $\le$ coincides with the topology $\mathcal{T}$.

\maketitle

\section{Free topological groups with a suitable set}
In this section, we mainly give a partial answer to Problem~\ref{pr0}, and consider some particular class of free topological groups with a suitable set. First we give a technical lemma.

By the proof (g) of \cite[Theorem 7.10.13]{A2008}, we have the following lemma.

\begin{lemma}\label{p0}
Let $X$ be a space. If $S$ is dense in $A(X)$, then supp($S$) is dense in $X$.
\end{lemma}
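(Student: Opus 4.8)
The plan is to argue by contradiction, exploiting the universal property of the free abelian topological group together with the Tychonoff separation axiom. Suppose that $\mbox{supp}(S)$ were not dense in $X$. Then there is a point $x_{0}\in X\setminus\overline{\mbox{supp}(S)}$, and since $X$ is Tychonoff and $\overline{\mbox{supp}(S)}$ is closed in $X$, we may fix a continuous function $f\colon X\to\mathbb{R}$ with $f(x_{0})=1$ and $f(x)=0$ for every $x\in\overline{\mbox{supp}(S)}$.

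Next I would regard $\mathbb{R}$ as an abelian topological group under addition and use the universal property of $A(X)$ (which is part of the content of \cite[Theorem 7.10.13]{A2008}) to extend $f$ to a continuous homomorphism $\widehat{f}\colon A(X)\to\mathbb{R}$. For each $g\in S$ write $g=r_{1}x_{1}+\cdots+r_{n}x_{n}$ in additive notation with $x_{i}\in\mbox{supp}(g)\subseteq\mbox{supp}(S)$; then $\widehat{f}(g)=\sum_{i=1}^{n}r_{i}f(x_{i})=0$, so $\widehat{f}(S)\subseteq\{0\}$. Since $S$ is dense in $A(X)$ and $\widehat{f}$ is continuous into the Hausdorff group $\mathbb{R}$, we get $\widehat{f}(A(X))=\widehat{f}(\overline{S})\subseteq\overline{\widehat{f}(S)}=\{0\}$. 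On the other hand $X$ embeds (topologically) in $A(X)$, so $x_{0}\in A(X)$ while $\widehat{f}(x_{0})=f(x_{0})=1\neq0$, a contradiction. Hence $\mbox{supp}(S)$ is dense in $X$.

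There is no serious obstacle in this argument: it is a routine assembly of two standard facts about $A(X)$ — that $X$ embeds topologically in $A(X)$, and that every continuous map from $X$ into an abelian topological group lifts to a continuous homomorphism on $A(X)$ — both contained in \cite[Theorem 7.10.13]{A2008}. The only point deserving a little care is to take the codomain of the separating function to be an (abelian) topological group, so that the lifting applies; I use $\mathbb{R}$, though $\mathbb{T}=\mathbb{R}/\mathbb{Z}$ would do equally well. Everything else is bookkeeping with supports, and the same scheme (with the appropriate noncommutative modifications) would also handle $F(X)$ in place of $A(X)$.
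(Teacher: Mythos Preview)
Your argument is correct and is precisely the standard route: separate a point outside $\overline{\mbox{supp}(S)}$ from that closed set by a continuous real-valued function, lift via the universal property of $A(X)$ to a continuous homomorphism into $\mathbb{R}$, and derive a contradiction from density. The paper does not supply its own proof of this lemma but simply defers to the proof of part~(g) of \cite[Theorem~7.10.13]{A2008}, which establishes $d(A(X))=d(X)$ by exactly this mechanism; your write-up is essentially a faithful reconstruction of that argument.
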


The following theorem gives a sufficient condition such that $A(X)$ does not have a suitable set for any non-separable $k_{\omega}$-space $X$ without
non-trivial convergent sequences.

\begin{theorem}\label{tttt}
Let $X$ be a non-separable $k_{\omega}$-space without
non-trivial convergent sequences. If $A(X)$ is $snf$-countable, then $A(X)$ does not have a suitable set.
\end{theorem}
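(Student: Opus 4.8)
The plan is to argue by contradiction: suppose $A(X)$ admits a suitable set $S$, and trap $S$ between two incompatible size constraints. The \emph{lower} bound on $S$ comes from Lemma~\ref{p0}. Since $\langle S\rangle$ is dense in $A(X)$ and the reduced‑word description of $A(X)$ gives $\mathrm{supp}(\langle S\rangle)\subseteq\mathrm{supp}(S)$ (every element of $\langle S\rangle$ is a product of members of $S$, and reduction only deletes letters), Lemma~\ref{p0} yields that $\mathrm{supp}(S)$ is dense in $X$. Non‑separability of $X$ makes $\mathrm{supp}(S)$ uncountable, and since each element of $A(X)$ has finite support, $S$ itself is uncountable. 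The \emph{upper} bound comes from the topological hypotheses: being a $k_\omega$‑space, $X$ is $\sigma$‑compact, hence so is $A(X)$ — indeed $A(X)$ is again a $k_\omega$‑space (see \cite{A2008}). Fix compact sets $C_n$ $(n\in\omega)$ with $A(X)=\bigcup_{n\in\omega}C_n$. As $S\cup\{e\}$ is closed, each $L_n:=(S\cup\{e\})\cap C_n$ is compact; and since $S=\bigcup_n(S\cap C_n)$ is uncountable, some $S\cap C_n$, and hence $L:=L_n$, is uncountable.

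The heart of the argument is to identify the homeomorphism type of this uncountable compact $L$. Because $S$ is discrete and $A(X)$ is Hausdorff, every $s\in L\setminus\{e\}$ is isolated in $L$: intersect an open set that isolates $s$ within $S$ with an open set containing $s$ but not $e$; the resulting open set meets $L$ only in $\{s\}$. A compact infinite space is never discrete, so $e$ must be the unique non‑isolated point of $L$, and therefore $L$ is homeomorphic to the one‑point compactification $\alpha D$ of the \emph{uncountable} discrete space $D:=L\setminus\{e\}$, with $e$ playing the role of the point at infinity. Now the $snf$‑countability hypothesis does its work. An $sn$‑network restricts to any subspace, so $L\cong\alpha D$ would have to be $snf$‑countable; but in $\alpha D$ a set containing the point at infinity is a sequential neighbourhood of it precisely when it is a genuine neighbourhood of it (any injective sequence lying in the complement of a non‑neighbourhood converges to the point at infinity). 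Hence an $sn$‑network of $\alpha D$ at that point is a neighbourhood base there, and a countable one would force $D$ to be countable — a contradiction. Consequently $A(X)$ has no suitable set.

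As for the obstacles: the support/density bookkeeping and the identification of $L$ with $\alpha D$ are routine, and the $\sigma$‑compactness (in fact $k_\omega$‑ness) of $A(X)$ is a standard, citable consequence of $X$ being a $k_\omega$‑space. The one genuinely load‑bearing step, which I would want to write out carefully, is the incompatibility of $snf$‑countability with $\alpha D$ for uncountable $D$ — specifically the implication ``sequential neighbourhood of the point at infinity $\Rightarrow$ neighbourhood of it'' and the remark that $sn$‑networks pass to subspaces. I note that the hypothesis that $X$ has no non‑trivial convergent sequences does not enter this line of argument directly; it is the setting of Problem~\ref{pr0}, and (since a space with no non‑trivial convergent sequences is automatically $snf$‑countable) it is also what singles out $snf$‑countability of $A(X)$ as the natural condition under which the conclusion persists. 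An alternative route bypassing $snf$‑countability is available if one knows that $A(X)$ inherits the absence of non‑trivial convergent sequences from $X$: the very compact set $L\cong\alpha D$ then supplies a non‑trivial convergent sequence in $A(X)$, a contradiction.
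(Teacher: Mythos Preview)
Your proposal is correct, and the overall structure matches the paper's proof up to the final step: both arguments show $S$ is uncountable via Lemma~\ref{p0}, use the $k_\omega$-structure of $A(X)$ to locate an uncountable compact piece of $S\cup\{e\}$, and identify that piece as the one-point compactification $\alpha D$ of an uncountable discrete space.

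The divergence is in how the contradiction is obtained. The paper observes that $\alpha D$ contains a non-trivial convergent sequence, and then invokes \cite[Theorem~3.1]{FLC} (which needs the hypothesis that $X$ has no non-trivial convergent sequences) to conclude that $A(X)$ can have no such sequence. You instead argue directly that $\alpha D$ with $D$ uncountable fails to be $snf$-countable, and derive the contradiction from the fact that $snf$-countability is hereditary. Your endgame is more elementary and self-contained---no external theorem is needed---and, as you correctly observe, it does not actually use the assumption that $X$ lacks non-trivial convergent sequences; so your argument establishes a slightly stronger statement than the one stated. What the paper's route buys is brevity once the cited result is in hand, and it makes explicit the connection with the absence of convergent sequences in $A(X)$, which is the theme of Problem~\ref{pr0}. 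The two points you flag as needing care (hereditariness of $sn$-networks, and the claim that in $\alpha D$ every sequential neighbourhood of the point at infinity is a genuine neighbourhood) are both straightforward to verify and would be worth writing out.
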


\begin{proof}
Let $A(X)$ have a suitable set $S$. Then $S$ is uncountable. Indeed, it is obvious that $\langle S\rangle$ is dense in $A(X)$, hence supp($S$) is dense in $X$ by Lemma~\ref{p0}. Then $S$ is a uncountable set since $X$ is a non-separable. Therefore, $S$ is uncountable. Since $X$ is a $k_{\omega}$-space, it follows that $A(X)$ is a $k_{\omega}$-space by \cite[Theorem 7.4.1]{A2008}. Then $S\cup\{e\}$ is a $k_{\omega}$-subspace since it is closed in $A(X)$. Then it is easy to check that $S\cup\{e\}$ contains a subspace $S_{0}\cup\{e\}$ which is homeomorphic to one-point compactification of a uncountable discrete space. Hence $A(X)$ contains a non-trivial convergent sequence.
However, since $A(X)$ is $snf$-countable, it follows from \cite[Theorem 3.1]{FLC} that $A(X)$ contains no non-trivial convergent sequences, which is a contradiction. Therefore, $A(X)$ does not have a suitable set.
\end{proof}

It follows from \cite[Theorem 3.3]{FLC} that, for a topological group $G$, the free Abelian topological group $A(G)$ is $snf$-countable if and only if $G$ contains no non-trivial convergent sequences. Therefore, by \cite[Theorems 3.1 and 3.3]{FLC}, the following corollary gives a partial answer to Problem~\ref{pr0}.

\begin{corollary}
Let $X$ be a non-separable $k_{\omega}$-space without
non-trivial convergent sequences. If $X$ is homeomorphic to a topological group, then $A(X)$ does not have a suitable set.
\end{corollary}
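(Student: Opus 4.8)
The plan is to deduce the Corollary directly from Theorem~\ref{tttt} by checking that its hypotheses are met. So suppose $X$ is a non-separable $k_{\omega}$-space without non-trivial convergent sequences, and suppose in addition that $X$ is homeomorphic to a topological group; call that group $G$, so $X \cong G$ and hence $A(X) \cong A(G)$ as topological groups. The only thing Theorem~\ref{tttt} needs beyond what is already assumed is that $A(X)$ be $snf$-countable. So the whole content of the proof is to verify that $A(G)$ is $snf$-countable, after which Theorem~\ref{tttt} immediately gives that $A(X)$ has no suitable set.

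For the $snf$-countability step I would invoke the cited characterization: by \cite[Theorem 3.3]{FLC}, for a topological group $G$ the free Abelian topological group $A(G)$ is $snf$-countable if and only if $A(G)$ contains no non-trivial convergent sequences. Thus it suffices to show $A(G)$ has no non-trivial convergent sequences. Here I would use \cite[Theorem 3.1]{FLC} in the other direction: that result (as used in the proof of Theorem~\ref{tttt}) ties the absence of convergent sequences in $A(X)$ to the absence of convergent sequences in $X$ together with $snf$-countability — but to get started cleanly I would instead first observe that $G$, being a topological group homeomorphic to $X$, has no non-trivial convergent sequences (a purely topological property, preserved by homeomorphism), and then combine \cite[Theorem 3.1]{FLC} and \cite[Theorem 3.3]{FLC} as the remark preceding the Corollary already packages them: these two theorems together say precisely that $A(G)$ is $snf$-countable whenever $G$ is a topological group with no non-trivial convergent sequences. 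Applying that to our $G$ yields that $A(G) \cong A(X)$ is $snf$-countable.

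With $A(X)$ now known to be $snf$-countable, and $X$ non-separable, a $k_{\omega}$-space, and free of non-trivial convergent sequences by hypothesis, all the hypotheses of Theorem~\ref{tttt} hold, so $A(X)$ does not have a suitable set. That completes the argument.

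The only genuine obstacle is making sure the chain of equivalences from \cite{FLC} is being applied to the right object: Theorem~\ref{tttt} is about $A(X)$ for an arbitrary space $X$, whereas \cite[Theorems 3.1 and 3.3]{FLC} are stated for $A(G)$ with $G$ a topological group. The hypothesis ``$X$ is homeomorphic to a topological group'' is exactly what bridges this gap, since $A(-)$ is a topological invariant, so $A(X) \cong A(G)$; I would state this transfer explicitly rather than leave it implicit. Everything else — non-separability of $X$, the $k_{\omega}$ property, and the absence of convergent sequences in $X$ — is simply inherited verbatim from the hypotheses into the application of Theorem~\ref{tttt}, so no further work is needed there.
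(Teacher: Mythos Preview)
Your proposal is correct and follows essentially the same route as the paper: the paper does not spell out a separate proof of the Corollary but derives it from Theorem~\ref{tttt} together with \cite[Theorems~3.1 and~3.3]{FLC}, exactly as you do. Your added remark that $A(X)\cong A(G)$ because $A(-)$ depends only on the homeomorphism type of the base space is the one detail the paper leaves implicit, and it is the right bridge between the hypotheses on $X$ and the cited results about $A(G)$.
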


However, the following question is unknown for us.

\begin{question}\label{q0}
Let $G$ be a non-separable $k_{\omega}$-topological group without
non-trivial convergent sequences. Does $A(G)^{n}$ have a suitable set for some $n\geq 2$?
\end{question}

Recall that a subset $Y$ of a space $X$ is called {\it $C^{\ast}$-embedded} in $X$ if any bounded continuous
real-valued function on $Y$ extends to a continuous real-valued function on $X$. If $X$ is a compact space in which every
countable discrete subset is $C^{\ast}$-embedded, then it follows from \cite[Theorem 2.1.4]{TKA1997} that $X$ does not contain any non-trivial convergent sequences. Therefore, the following result is a partial answer to Question~\ref{q0}.

\begin{theorem}
Let $X$ be a non-separable compact space in which every
countable discrete subset is $C^{\ast}$-embedded. Then $A(X)^{n}$ does not have a suitable set for any $n\in\mathbb{N}$.
\end{theorem}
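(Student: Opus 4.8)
The plan is to reduce the statement about $A(X)^n$ to the single case $n=1$, which has essentially been handled already, and to do this by exploiting the product structure together with the fact that suitable sets pass to dense subgroups and to continuous homomorphic images in a controlled way. First I would recall that if $X$ is a non-separable compact space in which every countable discrete subset is $C^\ast$-embedded, then by \cite[Theorem 2.1.4]{TKA1997} $X$ has no non-trivial convergent sequences; moreover $X$ is a $k_\omega$-space (being compact). The key extra input I expect to need is that $A(X)^n$ is again $snf$-countable, or, equivalently by \cite[Theorem 3.3]{FLC} (applied in the topological-group setting, since $A(X)$ is a topological group), that $A(X)^n$ contains no non-trivial convergent sequences. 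For this one observes that a convergent sequence in a finite product projects to a convergent sequence in each factor, and $A(X)$ has none by \cite[Theorem 3.1]{FLC} together with the fact that $X$ has none; hence $A(X)^n$ has none, so by \cite[Theorem 3.3]{FLC} it is $snf$-countable.

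Next I would argue along the lines of the proof of Theorem~\ref{tttt}. Suppose for contradiction that $A(X)^n$ has a suitable set $S$. The subgroup $\langle S\rangle$ is dense in $A(X)^n$, so its projection to the first factor $A(X)$ is dense there, whence (by Lemma~\ref{p0} applied to that projection) $\mathrm{supp}$ of the projection is dense in $X$; since $X$ is non-separable this forces $S$ to be uncountable. Now $A(X)^n$ is a $k_\omega$-space (a finite product of $k_\omega$-spaces is $k_\omega$), so its closed subspace $S\cup\{e\}$ is a $k_\omega$-space, and as in Theorem~\ref{tttt} an uncountable closed discrete-with-one-limit-point set inside a $k_\omega$-space must contain a copy of the one-point compactification of an uncountable discrete space; in particular $A(X)^n$ contains a non-trivial convergent sequence. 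This contradicts the conclusion of the previous paragraph that $A(X)^n$, being $snf$-countable with $X$ free of convergent sequences, contains no non-trivial convergent sequence (invoking \cite[Theorem 3.1]{FLC}). Therefore no such $S$ exists.

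The step I expect to be the main obstacle — and the one needing the most care — is the verification that $A(X)^n$ is $snf$-countable (equivalently, sequence-free). The cited results \cite[Theorems 3.1, 3.3]{FLC} are phrased for $A(Y)$, not for finite powers of it, so one must make the elementary but slightly delicate observation that the $snf$-countability of $A(X)$ is inherited by $A(X)^n$. The clean route is via convergent sequences: $snf$-countability of a topological group is equivalent to the absence of non-trivial convergent sequences by \cite[Theorem 3.3]{FLC} (recall $A(X)$ is a topological group), the absence of convergent sequences is trivially productive for finite products, and then one reads the equivalence backwards for the group $A(X)^n$. An alternative, if one prefers to stay with $X$ rather than the group $A(X)$: note $A(X)^n$ embeds as a topological subgroup of $A(X \oplus \cdots \oplus X) = A(\bigsqcup_{i<n} X_i)$ (with the $X_i$ disjoint copies of $X$), the disjoint sum $\bigsqcup_{i<n}X_i$ is again a non-separable compact space with every countable discrete subset $C^\ast$-embedded and hence has no non-trivial convergent sequences, so $A(\bigsqcup_{i<n}X_i)$ is $snf$-countable by \cite[Theorem 3.3]{FLC}; $snf$-countability being hereditary, $A(X)^n$ is $snf$-countable. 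Either way, once that point is secured the argument is exactly the one used for Theorem~\ref{tttt}, and the non-separability of $X$ (forcing $S$ uncountable) together with the $k_\omega$-structure (manufacturing a convergent sequence) delivers the contradiction.
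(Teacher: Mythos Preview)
Your argument has a genuine gap at the point you yourself flag as ``the main obstacle'': you never actually establish that $A(X)$ (and hence $A(X)^n$) is $snf$-countable, or equivalently that it contains no non-trivial convergent sequences. Both routes you propose rely on a misreading of the cited results. In the paper's proof of Theorem~\ref{tttt}, \cite[Theorem~3.1]{FLC} is invoked under the standing \emph{hypothesis} that $A(X)$ is $snf$-countable; it does not say ``$X$ has no convergent sequences $\Rightarrow$ $A(X)$ has none'' in isolation. Likewise, \cite[Theorem~3.3]{FLC} (as quoted just before the corollary to Theorem~\ref{tttt}) is an equivalence for $A(G)$ when $G$ is a topological group; it is not the statement that an arbitrary topological group is $snf$-countable iff it has no convergent sequences, so you cannot feed $A(X)^n$ into it. Your Route~B has the same defect: $A(X)^n\cong A(\bigsqcup_{i<n}X_i)$ is fine, but $\bigsqcup_{i<n}X_i$ need not be homeomorphic to a topological group, so \cite[Theorem~3.3]{FLC} again does not apply, and you are back to needing the very conclusion you are trying to prove.

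The theorem in the paper is not proved there; it is quoted from \cite[Theorem~2.1.4]{TKA1997}, with the surrounding sentence only pointing out that the $C^{\ast}$-embedding hypothesis forces $X$ to have no non-trivial convergent sequences (so that the result is indeed a partial answer to Problem~\ref{pr0}). The proof in \cite{TKA1997} uses the $C^{\ast}$-embedding hypothesis in a substantive way---to extend bounded functions off countable discrete subsets of $X$ and thereby build continuous homomorphisms on $A(X)^n$ that detect would-be suitable sets---rather than merely deducing ``$X$ has no convergent sequences'' and then running the $k_\omega$/$snf$-countable machinery of Theorem~\ref{tttt}. If you want to salvage your approach, the missing ingredient is an independent proof that the $C^{\ast}$-embedding hypothesis on $X$ forces $A(X)$ to be free of convergent sequences; that is not a consequence of the lemmas you cite and requires real use of the $C^{\ast}$-extension property.
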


In Theorem~\ref{tttt}, the assumption of ``non-separable'' is necessary since each free topological group over a separable space has a closed suitable set, see \cite{CM}. However, if $X$ is a Lindel\"{o}f space and $F(X)$ or $A(X)$ has a closed suitable set, does $X$ is separable? The following Theorem~\ref{ccc} gives a partial answer to this question. First, we give a lemma.

\begin{lemma}\label{t0}
Let $G$ be a Lindel\"{o}f group. Then the following statements hold.

\smallskip
(a) If $G\in\mathscr{S}_{c}$, then $G$ is separable.

\smallskip
(b) If $G$ is separable and non-compact then $G\in\mathscr{S}_{c}$.

\smallskip
(c) The group $G\in\mathscr{S}_{cg}$ if and only if $G$ is countable.
\end{lemma}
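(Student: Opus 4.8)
The plan is to settle (a) and the forward direction of (c) by a short counting argument, and to derive (b) together with the backward direction of (c) from a single construction. For (a): Lindel\"of spaces have countable extent, so every closed discrete subspace of $G$ is countable; hence a closed suitable set $S$ is countable, and $\langle S\rangle$ --- being generated by countably many elements --- is a countable dense subgroup, so $G$ is separable. If moreover $\langle S\rangle=G$, this shows $G$ is countable, which is the ``only if'' half of (c).

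For (b) and the ``if'' half of (c) it suffices to prove the following: \emph{if $G$ is a non-compact Lindel\"of group and $D=\{d_k:k\in\mathbb N\}\subseteq G$ is countable and dense, then $G$ has a closed discrete subset $S$ with $D\subseteq\langle S\rangle$.} Granting this, (b) follows by taking $D$ dense in the separable group $G$ (the compact case of (b) being vacuous), since then $\langle S\rangle$ is dense and $S$ is a closed suitable set. For the ``if'' half of (c): a countably infinite Hausdorff group is non-compact --- a countably infinite compact Hausdorff space has an isolated point by the Baire category theorem, but a topological group is homogeneous, so it would then be discrete, hence finite --- so for countably infinite Lindel\"of $G$ we apply the claim with $D=G$ to get a closed discrete $S$ with $\langle S\rangle=G$, i.e.\ $G\in\mathscr S_{cg}$; and finite groups lie in $\mathscr S_{cg}$ trivially via $S=G\setminus\{e\}$.

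For the construction, write $S=\bigcup_k\{a_k,b_k\}$ with $b_k:=a_k^{-1}d_k$, so that $d_k=a_kb_k\in\langle S\rangle$ automatically (skipping the pair when $d_k$ already lies in the subgroup generated so far, and perturbing the choices within a coset of that subgroup, or passing to longer words, to keep the points of $S$ distinct). Everything hinges on choosing the $a_k$ so that $S$ has no accumulation point in $G$; this is automatic if $G$ is discrete, and if $G$ is non-discrete and \emph{not precompact} it suffices to use an unbounded continuous left-invariant pseudonorm $f$ on $G$ (a continuous $f\colon G\to[0,\infty)$ with $f(e)=0$, $f(x^{-1})=f(x)$, $f(xy)\le f(x)+f(y)$ and $\sup f=\infty$): choosing $a_k$ with $f(a_k)>f(d_k)+2k$ gives $f(a_k)>2k$ and $f(b_k)\ge f(a_k)-f(d_k)>2k$, so each sublevel set $\{f\le m\}$ meets $S$ in a finite set, and by continuity of $f$ every $x\in G$ has the neighbourhood $\{f<f(x)+1\}$ meeting $S$ finitely; thus $S$ is closed and discrete.

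The hard part will be the \emph{precompact} (hence non-discrete) non-compact case, where no unbounded continuous pseudonorm exists at all --- if $f$ is continuous at $e$ then $\{f<\varepsilon\}$ is a neighbourhood of $e$, and precompactness yields a finite $F$ with $G=F\{f<\varepsilon\}$, whence $f\le\max_F f+\varepsilon$ --- a typical instance being a countable dense subgroup of a non-metrizable compact group. Here I would use the dense embedding of $G$ in its compact Raikov completion $\widehat G$: a set $S\subseteq G$ is closed and discrete in $G$ exactly when all of its limit points in $\widehat G$ lie in the (large) set $\widehat G\setminus G$, and one tries to choose the $a_k$ by a diagonalisation that, at each stage, controls finitely many coordinates (equivalently, finitely many basic neighbourhoods) so as to push the sequence's cluster set entirely outside $G$ while still reaching every element one wants $\langle S\rangle$ to contain. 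This is the same phenomenon that underlies the known constructions of (closed) suitable sets in separable and $\omega$-narrow groups; in fact (b) can be quoted from the results of Comfort--Morris~\cite{CM} and Dikranjan--Tkachenko~\cite{DTT}, and for (c) one only needs the observation that taking $G$ itself as the dense subgroup makes their suitable set generating.
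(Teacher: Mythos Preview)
Your treatment of (a) and the forward half of (c) coincides with the paper's. For (b) and the backward half of (c) you take a more hands-on route: you aim to construct the closed suitable set explicitly, splitting into the non-precompact case (handled via an unbounded continuous left-invariant pseudonorm --- a clean device, though its existence on an arbitrary non-precompact group is asserted rather than proved) and the precompact non-compact case (only sketched, then deferred to \cite{CM,DTT}). The paper instead dispatches (b) in one line: since a regular Lindel\"of pseudocompact space is compact, a non-compact Lindel\"of group is automatically not pseudocompact, and then \cite[Corollary~3.9]{DTT} gives $G\in\mathscr S_c$ directly from separability. For the ``if'' half of (c) the paper simply cites \cite[Theorem~2.2]{CM}.

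Your unification of (b) and (c) through the single claim ``$D\subseteq\langle S\rangle$ for a prescribed countable $D$'' is elegant, and the Baire-category argument that a countably infinite Hausdorff group cannot be compact is a nice self-contained touch the paper does not spell out. On the other hand, the Lindel\"of hypothesis --- which you carry in the statement of your claim --- is never actually used in your construction; its real role is precisely the paper's implication ``Lindel\"of $+$ non-compact $\Rightarrow$ not pseudocompact'', and that is exactly what you would need to match the hypotheses of \cite[Corollary~3.9]{DTT} when you fall back on citing it for the precompact case. Inserting that one observation would let you replace your Ra\u\i kov-completion sketch entirely with the same citation the paper uses.
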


\begin{proof}
(a) Assume $S$ is a closed suitable set for $G$. Then $S$ is Lindel\"{o}f since $S$ is closed in a Lindel\"{o}f group $G$. Hence $S$ is countable, which shows that $\langle S \rangle$ is countable. Thus $G$ is separable.

(b) If $G$ is separable and non-compact, then $G$ is not pseudocompact since a regular Lindel\"{o}f pseudocompact space is compact. Now it follows from \cite[Corollary 3.9]{DTT} that $G\in\mathscr{S}_{c}$.

(c) By the proof of (a), if $G\in\mathscr{S}_{cg}$ then $G$ is countable. The converse follows from \cite[Theorem 2.2]{CM}.
\end{proof}

\begin{theorem}\label{ccc}
Let $X$ be a regular space such that $X^{n}$ is Lindel\"{o}f for each $n\in\mathbb{N}$. Then the following conditions are equivalent:
\begin{enumerate}
\item $G(X)\in\mathscr{S}_{c}$.

\smallskip
\item $X$ is separable.
\end{enumerate}
\end{theorem}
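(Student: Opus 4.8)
The plan is to establish the two implications separately, using two standard facts about $G(X)$: that $X$ is a closed subspace of $G(X)$ which generates it algebraically, and that $G(X)$ is Lindel\"of whenever $X^{n}$ is Lindel\"of for every $n\in\mathbb{N}$. The second holds because $G(X)=\bigcup_{n\in\mathbb{N}}B_{n}$, where $B_{n}$ is a continuous image of the $n$-th power of the space $X\oplus X^{-1}\oplus\{e\}$ ($X^{-1}$ a homeomorphic copy of $X$), and that power is a finite disjoint union of spaces homeomorphic to powers $X^{k}$ with $k\le n$, hence Lindel\"of (see \cite{A2008}).

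For $(1)\Rightarrow(2)$, assume $G(X)\in\mathscr{S}_{c}$. Since each $X^{n}$ is Lindel\"of, $G(X)$ is a Lindel\"of group, so by Lemma~\ref{t0}(a) it is separable, and we may take a countable dense subgroup $H$ of $G(X)$. From $\mbox{supp}(gh)\subseteq\mbox{supp}(g)\cup\mbox{supp}(h)$ and $\mbox{supp}(g^{-1})=\mbox{supp}(g)$ it follows that $\mbox{supp}(H)=\bigcup_{g\in H}\mbox{supp}(g)$ is a countable union of finite sets, hence countable. On the other hand $H$ is dense in $G(X)$, so Lemma~\ref{p0} — whose proof via the universal property (the proof of \cite[Theorem 7.10.13]{A2008}) applies equally to $F(X)$ — shows that $\mbox{supp}(H)$ is dense in $X$. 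Therefore $X$ is separable.

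For $(2)\Rightarrow(1)$, assume $X$ is separable; the case $X=\emptyset$ being trivial, suppose $X\neq\emptyset$ and fix a countable dense $D\subseteq X$. Then $\langle D\rangle$ is a countable subgroup of $G(X)$ whose closure is a closed subgroup containing $\overline{D}=X$ (as $X$ is closed in $G(X)$), hence equals $G(X)$; so $G(X)$ is separable. It is moreover Lindel\"of by the fact above, and non-compact, since the subgroup of $G(X)$ generated by any point of $X$ is a closed discrete copy of $\mathbb{Z}$. Lemma~\ref{t0}(b) now yields $G(X)\in\mathscr{S}_{c}$; alternatively, this implication is immediate from the result of Comfort et al.\ \cite{CM} that free topological groups over separable spaces have closed suitable sets.

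The proof is really just bookkeeping with the lemmas and the two standard facts, and I do not foresee a genuine obstacle. The only place that merits a line of justification is the $F(X)$-version of Lemma~\ref{p0}: given $p\in X\setminus\overline{\mbox{supp}(H)}$, pick a continuous $f\colon X\to\mathbb{R}$ with $f(p)=1$ and $f\equiv 0$ on $\overline{\mbox{supp}(H)}$, extend it to a continuous homomorphism $\widehat{f}\colon F(X)\to\mathbb{R}$, and note that $\widehat{f}$ kills $\langle\mbox{supp}(H)\rangle\supseteq H$ while $\widehat{f}(p)=1$, so $p\notin\overline{H}$ and $H$ is not dense.
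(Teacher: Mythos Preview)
Your proof is correct and follows essentially the same route as the paper's: both directions go through Lemma~\ref{t0} after observing that $G(X)$ is Lindel\"of, separable when $X$ is, and non-compact when $X\neq\emptyset$, with Lemma~\ref{p0} supplying the passage from separability of $G(X)$ back to separability of $X$. Your version is in fact a bit more careful, since you justify the $F(X)$-analogue of Lemma~\ref{p0} (stated in the paper only for $A(X)$) and give an explicit reason for non-compactness rather than citing \cite[Proposition 7.1.12]{A2008}.
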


\begin{proof}
Obviously, we may assume that $X$ is nonempty. Since $X$ is a regular space such that $X^{n}$ is Lindel\"{o}f for each $n\in\mathbb{N}$, it follows from \cite[Corollary 7.1.18]{A2008} that $G(X)$ is Lindel\"{o}f.

(1) $\Rightarrow$ (2). Assume $G(X)\in\mathscr{S}_{c}$. Hence $G(X)$ is separable by (a) of Lemma~\ref{t0}, then $X$ is separable by Lemma~\ref{p0}.

(2) $\Rightarrow$ (1). Assume $X$ is separable, it is easily checked that $G(X)$ is separable. Since $X\neq\emptyset$, it follows from \cite[Proposition 7.1.12]{A2008} that $G(X)$ not compact. Now by (b) of Lemma~\ref{t0}, we conclude that $G(X)\in\mathscr{S}_{c}$.
\end{proof}

Recall that a space $X$ is {\it cosmic} if it has a countable network.

\begin{corollary}
Let $X$ be a cosmic space. Then $G(X)\in\mathscr{S}_{c}$. In particular, if $X$ is a separable metric space, then $G(X)\in\mathscr{S}_{c}$.
\end{corollary}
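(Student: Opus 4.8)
The plan is to derive this straight from Theorem~\ref{ccc}. To invoke that theorem for the space $X$ we need two hypotheses: that $X$ is regular, which is automatic since every space in this paper is Tychonoff, and that $X^{n}$ is Lindel\"of for each $n\in\mathbb{N}$; and then, to pass from condition (2) to condition (1) and conclude $G(X)\in\mathscr{S}_{c}$, we also need $X$ itself to be separable. So the whole proof reduces to checking that a cosmic space satisfies all of these.

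First I would recall the elementary facts about cosmic spaces. If $\mathcal{N}$ is a countable network for $X$, then selecting one point from each nonempty member of $\mathcal{N}$ produces a countable dense subset of $X$, so $X$ is separable; this is condition (2) of Theorem~\ref{ccc}. Likewise a countable network makes $X$ Lindel\"of: from an open cover one extracts, for each point, a network element trapped between the point and some cover member, and the resulting countable subfamily of $\mathcal{N}$ yields a countable subcover. Next, the key point that does require a moment's thought is that cosmicity is countably productive: if each $X_{i}$ has a countable network $\mathcal{N}_{i}$, then the family of all finite "cylinder boxes" $\prod_{i<k}N_{i}\times\prod_{i\geq k}X_{i}$ with $N_{i}\in\mathcal{N}_{i}$ is a countable network for $\prod_{i}X_{i}$. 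In particular $X^{n}$ is cosmic, hence Lindel\"of, for every $n\in\mathbb{N}$.

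With these facts in hand the argument is immediate: $X$ is a regular space all of whose finite powers are Lindel\"of, so Theorem~\ref{ccc} applies; since $X$ is moreover separable, condition (2) holds and therefore $G(X)\in\mathscr{S}_{c}$. For the "in particular" clause, a separable metric space has a countable base, which is a fortiori a countable network, so it is cosmic and the first part applies. I do not anticipate a genuine obstacle here—the statement is essentially a repackaging of Theorem~\ref{ccc}—the only non-cosmetic ingredient being the countable productivity of cosmic spaces (so that Lindel\"ofness survives to all finite powers), which is classical.
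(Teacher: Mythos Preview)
Your proposal is correct and follows exactly the route the paper intends: the corollary is stated without proof immediately after Theorem~\ref{ccc}, and you have supplied precisely the standard facts (cosmic $\Rightarrow$ separable, cosmic is finitely productive, cosmic $\Rightarrow$ Lindel\"of) needed to feed $X$ into that theorem. There is nothing to add.
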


The free topological group $G(X)$ is not a discrete for any non-discrete space $X$, hence $G(X)$ does not have any isolated points. Finally, we prove that each maximal topological group has a closed suitable set, where each maximal topological group is a space without isolated points by the following definition.

Let $X$ be a space without isolated points. Then $X$ is said to be {\it maximal} if it has isolated points in each stronger topology. A topological group is said to be {\it maximal} if its underlying space is maximal.

\begin{theorem}\label{t3}
Each maximal topological group has a closed suitable set.
\end{theorem}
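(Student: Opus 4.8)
The plan is to exploit the two defining features of a maximal topological group $G$: it has no isolated points (being maximal in the sense above, with underlying space without isolated points), and its topology is a maximal element among group topologies — equivalently, every subset that is not closed must already "use up" all the freedom, so non-closed sets are quite rare. More precisely, I would first recall the standard fact that in a maximal space every nowhere dense set is closed (equivalently, every dense set is open, equivalently, the only dense-in-itself subspace is... ); in a \emph{maximal topological group}, a cleaner formulation is available: a maximal topological group is \emph{extremally disconnected} and, crucially, it contains no non-trivial convergent sequences and every countable subset is closed and discrete (these are classical observations going back to Malykhin and to the study of maximal/extremally disconnected groups). So the first step is to record these structural facts, citing the relevant literature, and in particular that $G$ is a $P$-space-like object in the sense that countable sets are closed discrete.

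Next I would build the suitable set directly by transfinite recursion. Fix a base point $e$ and enumerate a base (or a $\pi$-base, or just a family of nonempty open sets whose union is dense — we only need to meet every nonempty open set of $G$) as $\{U_\alpha : \alpha < \kappa\}$ where $\kappa = w(G)$ or the $\pi$-weight. Recursively choose $s_\alpha \in U_\alpha \setminus (\langle \{s_\beta : \beta < \alpha\}\rangle \cup \{e\})$; this is possible because $\langle \{s_\beta : \beta<\alpha\}\rangle$, being generated by a set of size $<\kappa$ together with the fact that $G$ has no isolated points (so no nonempty open set is a single point) and is not, I will arrange, algebraically "covered" by the previously generated subgroup — here one uses that $U_\alpha$ is infinite (no isolated points) while the subgroup generated so far need not be, but one must be careful: if $|\langle \cdot \rangle|$ can equal the whole group we are stuck, so I would instead run the recursion only as long as $\langle S\rangle$ is not yet dense, choosing $s_\alpha$ from a nonempty open set disjoint from the closure of the current subgroup. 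The set $S = \{s_\alpha\}$ then generates a dense subgroup by construction. The point where maximality does the real work is verifying that $S$ (and $S\cup\{e\}$) is closed and discrete: since in a maximal topological group a set of size $< \mathfrak{c}$ — or more robustly, any set not containing a convergent net of a special form — is closed, and since $S$ contains no non-trivial convergent sequence (inherited from $G$), one argues that $S$ has no accumulation point other than possibly $e$, and $S\cup\{e\}$ is closed; discreteness of $S$ follows similarly. The cleanest route: use that in a maximal group every subset with no non-trivial convergent sequences through any of its points, and which is not "large" in a suitable sense, is closed and discrete — I would cite the precise lemma.

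The main obstacle I anticipate is the closedness/discreteness verification, i.e.\ making rigorous the claim that the recursively chosen $S$ is closed discrete in $G$. Naively, an arbitrary subset of a maximal group need not be closed; what saves us is that $S$ can be chosen \emph{strongly discrete} (with a disjoint family of open neighborhoods) by shrinking at each stage, using that $G$ has no isolated points together with regularity to separate $s_\alpha$ from the previously chosen points inside $U_\alpha$; and then in a maximal (hence extremally disconnected, hence with every discrete set closed — this is the key consequence of maximality I would lean on: maximal spaces are \emph{nodec}, every nowhere dense set is closed, and a strongly discrete countable-or-not set is nowhere dense) space such a strongly discrete set is automatically closed. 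Thus the engine is: maximal $\Rightarrow$ nodec $\Rightarrow$ discrete subsets are closed; combine with the recursive construction of a strongly discrete dense-subgroup-generating set. I would present the nodec property as a cited lemma, then give the recursion in half a page, and finish by noting $S\cup\{e\}$ closed follows because adding one point to a closed discrete set keeps it closed, and $S$ is discrete as a subspace, so $S$ is a closed suitable set — in fact a closed generating suitable set, so $G \in \mathscr{S}_{cg}$, which strengthens the statement.
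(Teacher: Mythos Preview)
Your strategy diverges from the paper's and contains a real gap. The paper's argument is a three-line application of Malykhin's theorem: a maximal topological group $G$ contains an open countable Boolean subgroup $H$; since $H$ is countable it has a closed suitable set $S$ by \cite[Theorem~2.2]{CM}; adjoining one representative from each non-trivial coset of $H$ yields a closed suitable set $A\cup S$ for $G$ (the cosets, being open, make $A$ closed and discrete and keep $S\cup\{e_H\}$ closed in $G$). Malykhin's structural result carries the entire weight; no transfinite construction is attempted.

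Your route tries to bypass this via the nodec property. The implication ``maximal $\Rightarrow$ nodec $\Rightarrow$ every discrete subset is closed'' is correct (in a $T_1$ dense-in-itself nodec space a subset is discrete iff it is nowhere dense, and nowhere dense sets are closed). But this only helps \emph{after} you have produced a discrete $S$ with $\langle S\rangle$ dense, and your recursion does not accomplish that. Choosing $s_\alpha\notin\overline{\langle S_{<\alpha}\rangle}$ makes $S$ merely left-separated---each $s_\alpha$ has a neighbourhood missing $S_{<\alpha}$---but nothing prevents $\{s_\beta:\beta>\alpha\}$ from accumulating at $s_\alpha$. Your ``strongly discrete'' repair (also avoid $\bigcup_{\beta<\alpha}W_\beta$ at stage $\alpha$) trades one failure for another: the region $G\setminus\bigl(\overline{\langle S_{<\alpha}\rangle}\cup\bigcup_{\beta<\alpha}W_\beta\bigr)$ can become empty while $\langle S_{<\alpha}\rangle$ is still not dense, so the recursion may halt without a suitable set. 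Neither nodec nor the absence of non-trivial convergent sequences closes this gap; some structural input of Malykhin type is genuinely needed to manufacture a discrete set whose generated subgroup is dense, and that is precisely what the paper invokes.
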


\begin{proof}
Let $G$ be a maximal topological group. Then $G$ has an open countable Boolean subgroup $H$ by \cite{Mal}. Then $H$ has a closed suitable set $S$ by \cite[Theorem 2.2]{CM}. Let $A$ select one point
from each coset of $H$ in $G\setminus H$. Then $A\cup S$ is a closed suitable set for $G$.
\end{proof}

\bigskip
\section{linearly orderable topological groups with a suitable set}
In this section, we mainly provide some partial answers to Problem~\ref{pr1}, and give an affirmative answer to Problem~\ref{pr6}.

Let $G$ be a linearly orderable topological group. If $G$ has a nontrivial component, then $G$ is metrizable by \cite[Theorem 2.4]{VRS}, hence $G$ has a suitable set. Moreover, each totally-disconnected linearly orderable topological group, which is separable or has a countable pseudocharacter, is metrizable by \cite[Theorems 2.6 and 2.7]{VRS}. Therefore, Problem~\ref{pr1} is equivalent to the following Problem~\ref{pr2}.

\begin{problem}\label{pr2}
Let $G$ be a totally-disconnected, linearly orderable topological group. If $d(G)>\omega$ and $\psi(G)>\omega$, does $G$
have a (closed) suitable set?
\end{problem}

In \cite[Theorem 2.4]{ST2002}, the authors proved that each dense subgroup $H$ of a linearly orderable topological group $G$ has a (closed) suitable set if $G$ has a (closed) suitable set. Indeed, there exist a compact topological group $G$ and a dense subgroup $H$ of $G$ such that $H$ does not have any suitable set, see \cite[Theorem 2.12]{DTT}. Therefore, the following question is interesting.

\begin{question}\label{q111}
Find the property $\mathcal{P}$ such that, for any topological group $G$ with $\mathcal{P}$, then $G$ has suitable set if and only if each dense subgroup $H$ of $G$ has a suitable set.
\end{question}

Next we give a simple proof of \cite[Theorem 2.4]{ST2002}. Indeed, we prove a more general fact, which gives a partial answer to Question~\ref{q111}. First, we give some concepts.

Recall that a space is called {\it collectionwise Hausdorff} if each closed
discrete subset of $X$ can be separated by a discrete family of open neighborhoods. Clearly, each paracompact space is collectionwise Hausdorff. A subset $D$ of a space $X$ is said to be {\it transfinite sequentially dense} if each point $x\in X$ is the unique accumulation point of a transfinite sequence of points in $D$.

\begin{theorem}\label{tt56}
Let $G$ be a hereditarily collectionwise Hausdorff topological group with
a (closed) suitable set $S$ and $H$ be a transfinite sequentially dense subgroup of $G$. Then $H$ has a (closed) suitable set.
\end{theorem}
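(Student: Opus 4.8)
The plan is to build a suitable set for $H$ from scratch, replacing each point of $S$ by a closed discrete subset of $H$ that clusters at it. One cannot simply perturb the points of $S$ into $H$: already the finite suitable set $\{1,\sqrt{2}\}$ of $(\mathbb{R},+)$ has no counterpart obtained by moving its two points into the dense subgroup $\mathbb{Q}$. The point of replacing a single $s$ by a whole set is this: if for each $s\in S$ one has a set $T_s\subseteq H$ with $s\in\overline{T_s}^{\,G}$, then $T:=\bigcup_{s\in S}T_s\subseteq H$ satisfies $S\subseteq\overline{\langle T\rangle}^{\,G}$, so $\overline{\langle T\rangle}^{\,G}$ is a closed subgroup of $G$ containing $\langle S\rangle$, hence all of $G$; and since $\langle T\rangle\subseteq H$ and $H$ is dense in $G$, $\langle T\rangle$ is dense in $H$. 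The real work is to keep $T$ discrete and closed (or closed after adjoining $e$) in $H$, and this is where hereditary collectionwise Hausdorffness is used.

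First I would discard $e$ from $S$ (possible since $S$ is discrete, so this affects neither its closedness nor $\langle S\rangle$) and assume $G$ nontrivial. Then $S$ is a closed discrete subset of the open subspace $G\setminus\{e\}$ (and, if $S$ was a closed suitable set, of $G$ itself), which is collectionwise Hausdorff by hypothesis, so there is a discrete family $\{U_s:s\in S\}$ of open sets with $s\in U_s$ for each $s$ — automatically pairwise disjoint, with $\overline{U_s}\cap S=\{s\}$ and $\{\overline{U_s}\}$ again discrete in $G\setminus\{e\}$. (For the ``closed'' variant one works inside $G$ and uses regularity to push all the $U_s$ off a fixed neighbourhood of $e$, so that $\{\overline{U_s}\}$ is discrete in $G$ and misses $e$.) Now for each $s\in S$: if $s\in H$, put $T_s=\{s\}$; if $s\notin H$, then $s\in\overline{H\cap U_s}^{\,G}$, and using that $H$ is transfinite sequentially dense in $G$ I would extract a set $T_s\subseteq H\cap U_s$ which is discrete, closed in $\overline{U_s}\setminus\{s\}$, and still has $s\in\overline{T_s}^{\,G}$; then $\overline{T_s}^{\,G}\subseteq T_s\cup\{s\}$ and $\overline{T_s}^{\,G}\cap H=T_s$. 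Set $T=\bigcup_{s\in S}T_s\subseteq H$.

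Checking the three requirements is then bookkeeping. Discreteness of $T$: a point of $T$ lies in exactly one $T_s$, has a neighbourhood meeting only $U_s$, and is isolated in $T_s$. Closedness: since $\{\overline{U_s}\}$ is discrete, $\overline{T}^{\,G}\subseteq\bigcup_s\overline{T_s}^{\,G}$ (together with $\{e\}$ in the non-closed case), and each $\overline{T_s}^{\,G}$ meets $H$ in exactly $T_s$, so $\overline{T}^{\,H}=T$ (respectively $\overline{T\cup\{e\}}^{\,H}=T\cup\{e\}$). Density: $S\subseteq\overline{\langle T\rangle}^{\,G}$ as above, so $\langle T\rangle$ is dense in $G$, hence in $H$. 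Therefore $T$ is a (closed) suitable set for $H$.

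The one substantial step is the extraction of $T_s$ for $s\notin H$. Transfinite sequential density only gives \emph{some} transfinite sequence in $H$ with unique accumulation point $s$, and its range need not be discrete, nor closed in $\overline{U_s}\setminus\{s\}$: for instance a sequence of length $\omega\cdot 2$ whose first $\omega$-block clusters at a point of $H\cap U_s$. One must thin it to a sub-sequence whose range is closed discrete in $\overline{U_s}\setminus\{s\}$ while still clustering at $s$; the natural device is a transfinite recursion picking the next point of $H\cap U_s$ inside a shrinking neighbourhood of $s$ that avoids the (closed, $H$-valued) set of points already chosen, and terminating once $s$ has entered the closure. The care needed is at limit stages — ensuring the recursion can continue and that the final set has $s$, and no point of $H$ outside it, in its $G$-closure — and here one may invoke the hereditary collectionwise Hausdorffness of $U_s$ to separate off any residual accumulation points. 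Everything outside this thinning step is routine.
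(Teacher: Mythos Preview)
Your approach is essentially the paper's: separate $S$ in $G\setminus\{e\}$ by a discrete open family $\{U_s\}$, replace each $s$ by a subset $T_s\subseteq H\cap U_s$ that accumulates at $s$, and check that $T=\bigcup_s T_s$ is suitable for $H$. The density and discreteness arguments match almost verbatim.

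The final paragraph, however, manufactures a difficulty that is not there. Read the paper's definition again: $H$ is \emph{transfinite sequentially dense} if each $s\in G$ is the \emph{unique accumulation point} of some transfinite sequence of points of $H$. ``Accumulation point'' here is taken in the ordinary set-theoretic sense applied to the range $R\subseteq H$. Since $s\notin H$, we have $s\notin R$; uniqueness says no point of $G$ other than $s$ is an accumulation point of $R$, so in particular no point of $R$ is, i.e.\ $R$ is already discrete, and $\overline{R}^{\,G}=R\cup\{s\}$, so $R$ is closed in $G\setminus\{s\}$. Intersecting with $U_s$ preserves both properties (any accumulation point of $R\cap U_s$ is one of $R$, hence equals $s$; and $s$ remains an accumulation point because every neighbourhood of $s$ meets $U_s$). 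Thus $T_s:=R\cap U_s$ works immediately, with no thinning. Your proposed counterexample --- a sequence of length $\omega\cdot2$ whose first $\omega$-block clusters at some $p\in H\cap U_s$ --- is impossible under this definition: such a $p$ would be a second accumulation point of the range (distinct from $s$, since $p\in H$ and $s\notin H$), contradicting uniqueness. So the ``one substantial step'' and the recursion you sketch are unnecessary; the paper simply takes $T_s$ to be the given sequence intersected with $U_s$ and proceeds.
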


\begin{proof}
Since $G$ is hereditarily collectionwise Hausdorff, it follows that $G\setminus\{e\}$ is collectionwise Hausdorff, hence, for each $x\in S$, there exists an open neighborhood $U_{x}\subset G\setminus\{e\}$ of $x$ such that the family $\{U_{x}: x\in S\}$ is discrete in $G\setminus\{e\}$. Now put $S_{x}=\{x\}$ whenever $x\in H\cap S$. If $x\in S\setminus H$, let $S_{x}$ be a transfinite sequence of $U_{x}\cap H$ such that $x$ is the unique accumulation point of $S_{x}$ in $G$. Put $S^{\ast}=\bigcup_{x\in S}S_{x}$. Then $S^{\ast}$ is closed and discrete in $G\setminus\{e\}$ since the family $\{S_{x}: x\in S\}$ is a locally finite family and each $S_{x}$ is discrete and closed in $G\setminus\{e\}$. It is obvious that $S\subset \overline{\langle S^{\ast} \rangle}$ and $\langle S\rangle$ is dense in $G$, it follows that $\langle S^{\ast} \rangle$ is dense in $H$. Therefore, $S^{\ast}$ is a suitable set for $H$.

Clearly, if $S$ is closed in $G$, then $S^{\ast}$ is closed in $H$.
\end{proof}

\begin{proposition}\label{pp3}
Each dense subgroup of a linearly orderable topological group $G$ is a transfinite sequentially dense subgroup.
\end{proposition}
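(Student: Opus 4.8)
The plan is to show that if $H$ is a dense subgroup of a linearly orderable topological group $G$ (with order $\le$ inducing the topology), then every point $g\in G$ arises as the unique accumulation point of some transfinite sequence of points of $H$. The natural candidate for such a transfinite sequence is a monotone net converging to $g$ from one side (or a pair of such nets, one from each side), built by transfinite recursion out of points of $H$ lying in ever-smaller order intervals around $g$. Since order-open rays form a subbase, a neighborhood base at $g$ is given by intervals $(a,b)$ with $a<g<b$ (with the obvious one-sided modification if $g$ is a left or right endpoint of $G$, or an isolated point, though in a group without isolated points this does not occur), and density of $H$ guarantees each such interval meets $H$.

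First I would fix a cofinal/coinitial structure: let $\kappa$ be a regular cardinal at least the character of $g$ in $G$, and choose a decreasing (in the inclusion order on intervals) $\kappa$-indexed neighborhood base $\{(a_\xi,b_\xi):\xi<\kappa\}$ at $g$ with $a_\xi<g<b_\xi$ and $\bigcap_\xi (a_\xi,b_\xi)=\{g\}$ near $g$ in the sense that every neighborhood of $g$ contains some $(a_\xi,b_\xi)$. Then, by density of $H$, for each $\xi$ pick $h_\xi\in H\cap (a_\xi,b_\xi)$ with $h_\xi\ne g$ if possible; if for some $\xi$ the only point of $H$ in $(a_\xi,b_\xi)$ is $g$ itself, then $g$ is already isolated from the right or left within $H$ and one argues directly (indeed then $g\in H$ and one can take the constant-from-one-side sequence, or handle the endpoint case separately). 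The resulting transfinite sequence $(h_\xi)_{\xi<\kappa}$ converges to $g$: any neighborhood $U$ of $g$ contains some $(a_{\xi_0},b_{\xi_0})$, and since the intervals are decreasing, $h_\xi\in (a_{\xi_0},b_{\xi_0})\subseteq U$ for all $\xi\ge\xi_0$. Finally I would check that $g$ is the \emph{unique} accumulation point: if $y\ne g$, then by Hausdorffness (linearly orderable spaces are $T_2$) there is an order-neighborhood $V$ of $y$ with $g\notin\overline{V}$, equivalently an interval around $y$ disjoint from a tail of the net, so $y$ is not an accumulation point of $\{h_\xi:\xi<\kappa\}$; and any point in a tail-complement is excluded as well by the convergence just established.

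The main obstacle is the bookkeeping at the endpoints and at points where $H\cap(a,b)=\{g\}$ for some interval — i.e., handling the possibility that $g$ has a one-sided discrete neighborhood in $H$, or that $g$ is the maximum or minimum of $G$. In a topological group without isolated points these degeneracies are mild: if $g$ has, say, an immediate predecessor in the order restricted to $H\cup\{g\}$ from the left, one uses a net approaching from the right, and if both sides are discrete then $g$ is isolated in $H$, contradicting density together with $G$ having no isolated points (or, if $G$ does have isolated points, then $G$ is discrete and the statement is trivial since then $H=G$). So the real content is simply to observe that linear orderability plus density forces the existence of a monotone net from $H$ converging to each $g$, and monotone nets have unique accumulation points in Hausdorff LOTS; the rest is the routine case analysis sketched above, which I would carry out in full in the actual proof.
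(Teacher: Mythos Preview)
Your approach is essentially the same as the paper's: the paper fixes a linearly ordered local base $\{U_\alpha:\alpha<\tau\}$ at $e$ and, for $x\notin H$, picks $z(x,\alpha)\in H\cap xU_\alpha$ for each $\alpha<\tau$, then asserts that $x$ is the unique accumulation point --- exactly your nested-interval construction with $(a_\xi,b_\xi)$ playing the role of $xU_\alpha$. One small caution: your closing aside that ``monotone nets have unique accumulation points in Hausdorff LOTS'' is false for the \emph{range} as a set (witness the identity sequence in $\omega_1+1$, whose range has every limit ordinal as an accumulation point), but your main argument, like the paper's, rests only on convergence of the net together with Hausdorffness and does not actually use this claim.
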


\begin{proof}
Let $H$ be a dense subgroup of $G$. If $G$ is metrizable, then it is obvious that $H$ is a transfinite sequentially dense subgroup. Now assume that $G$ is not metrizable. Since $G$ is a linearly orderable topological group, it follows from \cite[Theorem 6]{NR} that the neutral element of $G$ has a totally ordered local base consisting of clopen subgroups $\{U_{\alpha}: \alpha<\tau\}$, where we may assume that $\tau$ is a regular cardinal. For each $x\in G$, define a transfinite sequence in $H$ as follows. If $x\in H$, then put $S_{x}=\{x\}$; otherwise pick a transfinite sequence $S_{x}=\{z(x, \alpha): \alpha<\tau\}$, where each $z(x, \alpha)\in H\cap (xU_{\alpha+1}\setminus xU_{\alpha})$ with $\alpha<\tau$. Clearly, $x$ is the unique accumulation point of $S_{x}$ in $H$. Therefore, $H$ is a transfinite sequentially dense subgroup.
\end{proof}

\begin{corollary}\cite[Theorem 2.4]{ST2002}
Each dense subgroup of a linearly orderable topological group $G$ with a (closed) suitable set has a (closed) suitable set.
\end{corollary}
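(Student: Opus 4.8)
The plan is to obtain this corollary as an immediate consequence of Theorem~\ref{tt56} together with Proposition~\ref{pp3}. The only hypothesis of Theorem~\ref{tt56} that is not already at our disposal is that the ambient group $G$ be hereditarily collectionwise Hausdorff, so the first step is to verify that every linearly orderable topological group has this property. In fact it holds for the underlying space of any linearly orderable space: a linearly ordered topological space is monotonically normal, monotone normality passes to arbitrary subspaces, and every monotonically normal space is collectionwise normal, hence collectionwise Hausdorff. Thus a linearly orderable space is hereditarily collectionwise Hausdorff, and in particular so is $G$.

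Granting this, the argument is a short assembly. Let $H$ be a dense subgroup of a linearly orderable topological group $G$ admitting a (closed) suitable set. By Proposition~\ref{pp3}, $H$ is a transfinite sequentially dense subgroup of $G$. Since $G$ is a hereditarily collectionwise Hausdorff topological group with a (closed) suitable set and $H$ is a transfinite sequentially dense subgroup, Theorem~\ref{tt56} applies verbatim and produces a (closed) suitable set for $H$; the ``closed'' clause is preserved exactly as in the final line of the proof of Theorem~\ref{tt56}.

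I expect the only point requiring genuine care to be the claim that a linearly orderable space is hereditarily collectionwise Hausdorff. If one prefers to avoid invoking monotone normality as a black box, it can be argued directly: given a closed discrete subset $D$ of an arbitrary subspace (equivalently, of an open subspace, which is all the proof of Theorem~\ref{tt56} uses), choose for each $d\in D$ a convex open neighborhood $I_d$ meeting $D$ only in $d$, and then trim the family $\{I_d : d\in D\}$ to a discrete family of open neighborhoods by the standard order-theoretic endpoint argument. Everything else in the deduction is purely formal, so there is no further obstacle.
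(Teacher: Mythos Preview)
Your proposal is correct and follows essentially the same route as the paper: reduce to Theorem~\ref{tt56} and Proposition~\ref{pp3} after checking that a linearly orderable topological group is hereditarily collectionwise Hausdorff. The only cosmetic difference is that the paper obtains this property via hereditary paracompactness of linearly ordered spaces (citing \cite[Theorem 8]{NP1}), whereas you go through monotone normality; both are standard and equally valid.
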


\begin{proof}
Since $G$ is hereditarily paracompact by \cite[Theorem 8]{NP1}, it follows $G$ is hereditarily collectionwise Hausdorff. Now the conclusion is immediate by applying Theorem~\ref{tt56} and Proposition~\ref{pp3}.
\end{proof}

Moreover, we can prove the following theorem.

\begin{theorem}
Let $G$ be a linearly orderable topological group. Then each subgroup of $G$ has a suitable set if and only if each closed linearly orderable subgroup of $G$ has a suitable set.
\end{theorem}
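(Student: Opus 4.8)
The plan is to prove the equivalence in two directions, with one being trivial. If each subgroup of $G$ has a suitable set, then in particular each closed linearly orderable subgroup does, so the forward implication is immediate. The substance is the converse: assuming every closed linearly orderable subgroup of $G$ has a suitable set, I want to conclude that an arbitrary subgroup $H\le G$ has a suitable set. The natural route is to pass from $H$ to its closure $\overline{H}$ in $G$, apply the hypothesis to $\overline{H}$, and then descend back down to $H$ using the density results already proved in this section.

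First I would observe that a subgroup of a linearly orderable topological group is again linearly orderable in the subspace order topology: the restriction of the order to a subgroup, with the subspace topology, is still generated by rays, since $G$ is a linearly ordered topological space in the standard sense (the order topology agrees with $\mathcal{T}$). Hence the closure $\overline{H}$, being a closed subgroup of $G$, is a closed linearly orderable subgroup of $G$, and by hypothesis $\overline{H}$ has a suitable set $S$. Next I would invoke Proposition~\ref{pp3}: since $H$ is dense in $\overline{H}$, which is itself a linearly orderable topological group, $H$ is a transfinite sequentially dense subgroup of $\overline{H}$. Also, $\overline{H}$ is hereditarily paracompact by \cite[Theorem 8]{NP1} (it is linearly orderable), hence hereditarily collectionwise Hausdorff. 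Now Theorem~\ref{tt56}, applied to the group $\overline{H}$ in place of $G$ and to its dense subgroup $H$, yields that $H$ has a suitable set. Since $H$ was an arbitrary subgroup of $G$, the converse is proved.

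The only genuine point requiring care is the first step, namely checking that every subgroup of $G$ — and in particular the closure $\overline{H}$ — inherits the structure of a linearly orderable topological group so that the results of this section apply to it; this is where I expect the main (if modest) obstacle to lie, since one must confirm that the subspace topology on a subgroup coincides with the order topology induced by the restricted order, which uses that $G$ itself is a LOTS. Once that is in hand, the argument is a direct chain: $H$ dense in $\overline{H}$, $\overline{H}$ a closed linearly orderable subgroup with a suitable set by hypothesis, $\overline{H}$ hereditarily collectionwise Hausdorff, $H$ transfinite sequentially dense in $\overline{H}$ by Proposition~\ref{pp3}, and finally Theorem~\ref{tt56} delivers a suitable set for $H$. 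If one wishes to track the "closed suitable set" version as well, the same argument works verbatim, because Theorem~\ref{tt56} preserves closedness of the suitable set.
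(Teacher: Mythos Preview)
Your overall strategy --- pass to $\overline{H}$, apply the hypothesis there, then descend to the dense subgroup $H$ --- is exactly the paper's route, and your use of Theorem~\ref{tt56} together with Proposition~\ref{pp3} in place of a direct citation of \cite[Theorem~2.4]{ST2002} is harmless (the Corollary immediately preceding this theorem shows these are equivalent).

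The gap is precisely where you flagged it, and your proposed resolution does not work. The sentence ``the restriction of the order to a subgroup, with the subspace topology, is still generated by rays, since $G$ is a linearly ordered topological space'' is false as a general principle: a subspace of a LOTS is only a GO-space, and the subspace topology need not coincide with the order topology of the restricted order. Being a \emph{subgroup} does not automatically repair this; some argument using the group structure is required, and you have not supplied one. The paper closes this gap by a case split that you are missing. If $\overline{H}$ is metrizable, then so is $H$, and \cite[Theorem~6.6]{CM} gives $H$ a suitable set outright --- the hypothesis is not even invoked, and no claim about $\overline{H}$ being linearly orderable is needed. If $\overline{H}$ is not metrizable, then $G$ is not metrizable, and the Nyikos--Reichel structure theorem \cite{NP1} furnishes a totally ordered local base $\{U_\alpha:\alpha\in I\}$ of open subgroups at $e$ in $G$; the family $\{U_\alpha\cap\overline{H}:\alpha\in I\}$ is then such a base in $\overline{H}$, whence (by \cite{NP1} again) $\overline{H}$ \emph{is} linearly orderable, and only now does the hypothesis apply. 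In short, the missing ingredient is the Nyikos--Reichel characterization, which is what actually certifies that the closed subgroup $\overline{H}$ is linearly orderable in the non-metrizable case; the metrizable case has to be peeled off and handled separately.
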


\begin{proof}
The sufficiency is obvious, hence is suffices to prove the necessity. Let $H$ be a subgroup of $G$. If $\overline{H}$ is metrizable, then $\overline{H}$ and $H$ all have suitable sets by \cite[Theorem 6.6]{CM}. If $\overline{H}$ is not metrizable, then $G$ is not metrizable, hence it follows from \cite{NP1} that $G$ contains a decreasing chain of open subgroups $\{U_{\alpha}: \alpha\in I\}$
which forms a local base at the identity. Then $\{U_{\alpha}\cap \overline{H}: \alpha\in I\}$ is a decreasing chain of open subgroups in $\overline{H}$
which form a local base at the identity. Hence $\overline{H}$ is a linearly orderable topological subgroup. By our assumption, $\overline{H}$ has a suitable set, then $H$ has a suitable set by \cite[Theorem 2.4]{ST2002}.
\end{proof}

{\bf Note:} By \cite[Corollary 3.3]{DTT}, spaces with a $\sigma$-discrete network (that is, $\sigma$-spaces) belong to $\mathcal{P}$ in Question~\ref{q111}. Therefore, we have the following proposition.

\begin{proposition}
Let $X$ be a paracompact $\sigma$-space. Then each subgroup of $F(X)$ has a suitable set.
\end{proposition}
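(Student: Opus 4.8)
The plan is to reduce the statement to the general principle recorded in the Note above: a topological group whose underlying space is a $\sigma$-space (equivalently, has a $\sigma$-discrete network) lies in the class $\mathcal{P}$ of Question~\ref{q111}, and by \cite[Corollary 3.3]{DTT} it already carries a suitable set. Granting this, the only genuinely new ingredient needed is the structural fact that the free topological group over a paracompact $\sigma$-space is again a paracompact $\sigma$-space; I would invoke this known preservation result for free topological groups. It is obtained, as is usual for such statements, by analysing the subspaces $F_{n}(X)$ consisting of words of reduced length at most $n$: each $F_{n}(X)$ is a continuous image of the $n$-th power of $X\oplus X^{-1}\oplus\{e\}$, which is a paracompact $\sigma$-space, under a map well behaved enough to transport a $\sigma$-locally finite network, and $F(X)=\bigcup_{n\in\mathbb{N}}F_{n}(X)$ then inherits a $\sigma$-locally finite network as well (together with paracompactness).

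With this in hand the argument is purely formal. Let $H$ be an arbitrary subgroup of $F(X)$. A subspace of a $\sigma$-space is a $\sigma$-space, since a $\sigma$-locally finite network restricts to one on any subspace and the separation axioms are hereditary; hence $H$, with the subspace topology, is a topological group whose underlying space is a $\sigma$-space, and \cite[Corollary 3.3]{DTT} supplies a suitable set for $H$. Equivalently, and in line with the Note, one may instead pass to the closure $\overline{H}$ of $H$ in $F(X)$: it is a closed subgroup, hence a paracompact $\sigma$-space, so it has a suitable set and lies in $\mathcal{P}$; since $H$ is dense in $\overline{H}$, the membership $\overline{H}\in\mathcal{P}$ forces $H$ to have a suitable set. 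Either way, as $H$ was arbitrary, every subgroup of $F(X)$ has a suitable set.

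I expect the only real obstacle to be the preservation fact in the first paragraph — that the passage from $X$ to $F(X)$ does not destroy the paracompact $\sigma$-space property. If one did not wish to quote it, the delicate point would be verifying that the canonical surjections $(X\oplus X^{-1}\oplus\{e\})^{n}\to F_{n}(X)$ respect $\sigma$-locally finite networks (they are not perfect in general, so some care with the closed pieces covering $F_{n}(X)$ is required); once that is settled, the heredity to subgroups and the application of \cite[Corollary 3.3]{DTT} are entirely routine.
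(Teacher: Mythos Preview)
Your proposal is correct and follows essentially the same route as the paper: invoke the preservation result that $F(X)$ is a paracompact $\sigma$-space (the paper cites \cite[Theorem~7.6.7]{A2008} for this), use heredity of the $\sigma$-space property to pass to an arbitrary subgroup, and then apply \cite[Corollary~3.3]{DTT}. The paper's proof is just the two-line version of what you wrote, omitting your sketch of the $F_n(X)$ argument and the alternative route via $\overline{H}$.
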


\begin{proof}
By \cite[Theorem 7.6.7]{A2008}, the free topological group $F(X)$ is a paracompact $\sigma$-space, hence each subgroup of $F(X)$ is a $\sigma$-space. The conclusion is derived from \cite[Corollary 3.3]{DTT}.
\end{proof}

\begin{corollary}
Let $X$ be a metrizable space. Then each subgroup of $F(X)$ has a suitable set.
\end{corollary}

Let the direct product $\prod_{\alpha<\tau}G_{\alpha}$ endowed with the Tychonoff topology, and let $G$ the $\sigma$-product of $\prod_{\alpha<\tau}G_{\alpha}$ which inherits the subspace group topology $\mathscr{F}_{G}$, where each $G_{\alpha}$ is a non-trivial discrete group. For each $g=(g_{\alpha})_{\alpha<\tau}\in \prod_{\alpha<\tau}G_{\alpha}$, we denoted the set $\{g_{\alpha}: g_{\alpha}\neq e_{\alpha}, \alpha<\tau\}$ by $Proj(g)$.

\smallskip
Now the following theorem gives an affirmative answer to Problem~\ref{pr6}.

\begin{theorem}\label{t4}
Every subgroup of $(G, \mathscr{F}_{G})$ has a generating suitable
set.
\end{theorem}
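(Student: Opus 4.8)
The plan is to build a generating suitable set for an arbitrary subgroup $H$ of $(G,\mathscr{F}_{G})$ by a transfinite coordinate-by-coordinate recursion. First record the structure of $G$ we need: writing $\mathrm{supp}(g)=\{\alpha<\tau:g_{\alpha}\neq e_{\alpha}\}$ for $g\in G$ (a finite set, since $g$ lies in the $\sigma$-product), the clopen normal subgroups $U_{F}=\{g\in G:g_{\alpha}=e_{\alpha}\text{ for all }\alpha\in F\}$ with $F\in[\tau]^{<\omega}$ form a base at $e$, so that $\{hU_{F}\cap H:F\in[\tau]^{<\omega}\}$ is a neighbourhood base at $h$ in $H$ for every $h\in H$. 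For $H=G$ the theorem is immediate: the set of all $g\in G$ with $|\mathrm{supp}(g)|=1$ is discrete, has closure contained in itself together with $\{e\}$, and generates $G$. Hence the content of the theorem lies in proper --- in particular, non-dense and non-closed --- subgroups.

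Fix $H\le G$ and a well-order of $\tau$, identifying each $\alpha<\tau$ with the set $\{\gamma:\gamma<\alpha\}$. For $\beta\le\tau$ put $H_{\beta}=\{h\in H:\mathrm{supp}(h)\subseteq\beta\}$. Because supports are finite, $(H_{\beta})_{\beta\le\tau}$ is an increasing, limit-continuous chain of (closed and normal) subgroups of $H$ with union $H$, and the homomorphism $h\mapsto h_{\beta}$ identifies $H_{\beta+1}/H_{\beta}$ with a subgroup $A_{\beta}$ of the discrete group $G_{\beta}$. Recursively in $\beta$, choose for each $a\in A_{\beta}\setminus\{e_{\beta}\}$ a representative $t_{\beta}^{a}\in H_{\beta+1}$ of the corresponding coset of $H_{\beta}$; the delicate requirement is that this be done with $\mathrm{supp}(t_{\beta}^{a})$ as concentrated near $\beta$ as possible, say with $\mathrm{supp}(t_{\beta}^{a})\cap\mathrm{supp}(H_{\beta})$ inclusion-minimal over the members of that coset --- so that $\mathrm{supp}(t_{\beta}^{a})=\{\beta\}$ whenever the coset allows it, recovering the construction for $G$. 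Put $T_{\beta}=\{t_{\beta}^{a}:a\in A_{\beta}\setminus\{e_{\beta}\}\}$ and $S=\bigcup_{\beta<\tau}T_{\beta}$. Then $\langle S\rangle=H$: a transfinite induction along the chain gives $\langle\bigcup_{\gamma<\beta}T_{\gamma}\rangle=H_{\beta}$ for every $\beta\le\tau$, using at successor steps that $T_{\gamma}$ meets every nontrivial coset of $H_{\gamma}$ in $H_{\gamma+1}$ and at limit steps that the chain is continuous.

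It remains to show that $S$ is discrete and $S\cup\{e\}$ is closed in $H$; since $G$ is $T_{1}$ this will follow once we prove that for each $h\in H\setminus\{e\}$ the basic neighbourhood $V=hU_{F_{0}}\cap H$, with $F_{0}=\mathrm{supp}(h)$ and $\mu=\max F_{0}$, meets $S$ in a finite set. If $t_{\beta}^{a}\in V$ then $t_{\beta}^{a}$ agrees with $h$ on $F_{0}$, so $F_{0}\subseteq\mathrm{supp}(t_{\beta}^{a})\subseteq\beta+1$ and therefore $\beta\ge\mu$; and if $\beta=\mu$ the coordinate $a=h_{\mu}$ is forced, so $V\cap T_{\mu}$ has at most one element. \textbf{The main obstacle is to control the levels $\beta>\mu$.} For such $\beta$ one has $F_{0}\subseteq\beta$, hence $h\in H_{\beta}$ and $F_{0}\subseteq\mathrm{supp}(H_{\beta})$, so the minimality imposed on $t_{\beta}^{a}$ forces its support to meet $F_{0}$ only when the coset of $a$ unavoidably requires it; the combinatorial heart of the proof is to deduce --- exploiting that the ``newly appearing coordinate sets'' $\mathrm{supp}(H_{\beta+1})\setminus\mathrm{supp}(H_{\beta})$ are pairwise disjoint over $\beta$ --- that only finitely many pairs $(\beta,a)$ with $\beta>\mu$ admit $t_{\beta}^{a}|_{F_{0}}=h|_{F_{0}}$. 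Granting this, $V\cap S$ is finite; since $G$ is $T_{1}$ and $e\notin V$, deleting from $V$ the finitely many members of $V\cap S$ distinct from $h$ shows that $h$ is isolated in $S$ (if $h\in S$) and that $h\notin\overline{S}$ (if $h\notin S$). Hence $S$ is discrete, $S\cup\{e\}$ is closed in $H$, and $S$ is a generating suitable set for $H$.
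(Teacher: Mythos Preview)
Your transfinite coordinate-by-coordinate construction is genuinely different from the paper's argument. The paper stratifies $H\setminus\{e\}$ by the \emph{size} of the support: it sets $S_{1}=\{h\in H:|\mathrm{Proj}(h)|=1\}$ and, inductively, $S_{n}=\{h\in H:|\mathrm{Proj}(h)|=n\text{ and }\mathrm{Proj}(g)\not\subseteq\mathrm{Proj}(h)\text{ for every }g\in\bigcup_{i<n}S_{i}\}$, then takes $S=\bigcup_{n}S_{n}$. This is canonical (no choices are made), and each of the three verifications reduces to the single observation that if $h\notin S\cup\{e\}$ then some $g\in S$ with strictly smaller support satisfies $\mathrm{Proj}(g)\subseteq\mathrm{Proj}(h)$.

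Your approach is workable, but the proof as written has a real gap exactly where you flag it. You assert that only finitely many pairs $(\beta,a)$ with $\beta>\mu$ satisfy $t_{\beta}^{a}|_{F_{0}}=h|_{F_{0}}$, pointing to the disjointness of the sets $\mathrm{supp}(H_{\beta+1})\setminus\mathrm{supp}(H_{\beta})$; but that disjointness does not by itself yield the conclusion, and you give no argument beyond ``Granting this''. In fact the minimality hypothesis you imposed delivers something stronger, namely that \emph{no} such pair exists. If $\beta>\mu$ then $h\in H_{\beta}$, so $t_{\beta}^{a}$ and $t_{\beta}^{a}h^{-1}$ lie in the same $H_{\beta}$-coset; and if $t_{\beta}^{a}|_{F_{0}}=h|_{F_{0}}$ then a coordinatewise check gives $\mathrm{supp}(t_{\beta}^{a}h^{-1})=\mathrm{supp}(t_{\beta}^{a})\setminus F_{0}$, whence $\mathrm{supp}(t_{\beta}^{a}h^{-1})\cap\mathrm{supp}(H_{\beta})$ is a \emph{proper} subset of $\mathrm{supp}(t_{\beta}^{a})\cap\mathrm{supp}(H_{\beta})$ (proper because $\emptyset\ne F_{0}=\mathrm{supp}(h)\subseteq\mathrm{supp}(H_{\beta})$), contradicting inclusion-minimality of $t_{\beta}^{a}$. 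Combined with your observations for $\beta\le\mu$ this yields $|V\cap S|\le 1$, and the rest of your argument goes through. So the architecture is sound; what is missing is this one-line use of the very minimality condition you introduced, not the disjointness heuristic.
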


\begin{proof}
Without loss of generality, we may assume that $G_{\alpha}\cap G_{\beta}=\emptyset$ for any distinct $\alpha, \beta\in \tau$. Let $H$ be a subgroup of $G$. Next we can construct a sequence $\{S_{n}: n\in\mathbb{N}\}$ of subsets of $H$ satisfy the following conditions:

\smallskip
(1) $S_{1}=\{h\in H: |Proj(h)|=1\}$;

\smallskip
(2) for any $n\in\mathbb{N}\setminus\{1\}$, put $$S_{n}=\{h\in H: |Proj(h)|=n, Proj(g)\setminus Proj(h)\neq\emptyset\ \mbox{for any}\ g\in\bigcup_{i=1}^{n-1}S_{i}\}.$$

Now put $S=\bigcup_{n\in\mathbb{N}}S_{n}$. We conclude that $S$ is a generating suitable set of $H$. We divide the proof into the following three steps.

\smallskip
{\bf Step 1}: $S$ is a discrete subset.

\smallskip
Indeed, for any $h\in S$, there exists $n\in\mathbb{N}$ such that $h\in S_{n}$. Let $$Proj(h)=\{h_{\alpha_{1}}, \ldots, h_{\alpha_{n}}\},$$ and put $$U=(\prod_{i=1}^{n}\{h_{\alpha_{i}}\})\times \prod_{\alpha\in\tau\setminus\{\alpha_{i}: 1\leq i\leq n\}}G_{\alpha}.$$ We claim that $U\cap S=\{h\}$. Suppose not, from the construction of $S_{n}$, it follows there exist $m>n$ and $g\in U\cap S$ such that $g\in S_{m}$. Since $g\in U$, we have $Proj(h)\subset Proj(g)$, which is a contradiction because $Proj(s)\setminus Proj(g)\neq\emptyset$ for any $s\in S_{n}$ by the definition of $S_{m}$. Therefore, $U\cap S=\{h\}$.

\smallskip
{\bf Step 2}: $S\cup\{e\}$ is closed in $H$.

\smallskip
Take any $h\in H\setminus(S\cup\{e\})$, and let $Proj(h)=\{h_{\beta_{1}}, \ldots, h_{\beta_{l}}\}$, where $l\in\mathbb{N}$. Obviously, we have $h\not\in S_{l}$, then there exist $j<l$ and $s\in S_{j}$ such that $Proj(s)\subset Proj(h)$. Put $$V=(\prod_{i=1}^{l}\{h_{\beta_{i}}\})\times \prod_{\alpha\in\tau\setminus\{\beta_{i}: 1\leq i\leq l\}}G_{\alpha}.$$ Obviously, we have $V\cap (\bigcup_{i=1}^{l}S_{i})=\emptyset$. Now it suffices to prove that $V\cap (\bigcup_{i>l}S_{i})=\emptyset$. Suppose not, choose any $g\in V\cap (\bigcup_{i>l}S_{i})$. Since $g\in V$, it follows that $Proj(h)\subset Proj(g)$, then $Proj(s)\subset Proj(g)$, which is a contradiction since $s\in S_{j}$ and $g\in \bigcup_{i>l}S_{i}$. Therefore, $V\cap S=\emptyset$.

\smallskip
{\bf Step 3}: $\langle S\rangle$ is a generated set of $H$.

\smallskip
Indeed, take any $h\in H\setminus(S\cup\{e\})$, and let $Proj(h)=\{h_{\beta_{1}}, \ldots, h_{\beta_{n}}\}$, where $n\in\mathbb{N}$. By the proof of Step 2, there exist $j_{1}<n$ and $s_{1}\in S_{j_{1}}$ such that $Proj(s_{1})\subset Proj(h)$. Put $h_{1}=s_{1}^{-1}h$. Then $|Proj(h_{1})|<|Proj(h)|$. If  $h_{1}\in S$, then $h=s_{1}h_{1}$, the result is desired; otherwise, $h_{1}\not\in S$. By a similar method, there exist $j_{2}<|Proj(h_{1})|$ and $s_{2}\in S_{j_{2}}$ such that $Proj(s_{2})\subset Proj(h_{1})$. By induction, since $Proj(h)$ is a finite set, it is easy to see that there exist $s_{1}, s_{2}, \ldots, s_{k}$ for some $k\in \mathbb{N}$ such that $h=s_{1} s_{2} \ldots s_{k}$, where each $s_{i}\in \bigcup_{j=1}^{n-1}S_{j}$ and $\sum_{i=1}^{k}|Proj(s_{i})|=|Proj(h)|$. Therefore, we have $\langle S\rangle=H.$

Therefore, $S$ is a generating suitable set of $H$.
\end{proof}

Suppose that $\tau$ is an infinite cardinal and $G^{\star}$ is the group $\prod_{\alpha<\tau}G_{\alpha}$ endowed with the linear topology such that the neutral element has a base which consists of the subgroups $U_{\alpha}=\prod_{\alpha\leq\beta<\tau}G_{\beta}$, where $\alpha<\tau$. For any $\alpha<\tau$, let $\pi_{\alpha}: G^{\star}\rightarrow G[\alpha]$ be the natural homomorphic retraction of $G$ onto its subgroup $G[\alpha]=\prod_{\beta<\alpha}G_{\beta}$. In \cite{ST2002}, the authors posed the following problem.

\begin{problem}\cite[Problem 4.4]{ST2002}\label{pr7}
It is true that every subgroup of $G^{\star}$ has a suitable set?
\end{problem}

The following theorem gives a partial answer to Problems~\ref{pr1} and~\ref{pr7}.

\begin{theorem}
Assume that the group $G=\prod_{\alpha<\tau}G_{\alpha}$ carries the linear group topology $G^{\star}$, and $H$ is a subgroup of $G$. If $|\pi_{\alpha}(H)|=|H|$ for some $\alpha<\tau$, then $H$ has a closed generating suitable set.
\end{theorem}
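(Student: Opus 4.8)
The plan is to exploit that $U_\alpha$ is an open subgroup of $G^\star$. Then $N:=H\cap U_\alpha$ is an open (hence clopen) subgroup of $H$, and $q:=\pi_\alpha|_H\colon H\to K:=\pi_\alpha(H)$ is a continuous epimorphism with $\ker q=N$; consequently $H=\bigsqcup_{k\in K}q^{-1}(k)$ is a partition of $H$ into clopen cosets of $N$ indexed by a set of size $|K|=|\pi_\alpha(H)|=|H|$. First I would dispose of the trivial case $|H|<\omega$ (then $H$ is discrete and $H\setminus\{e\}$ works), so assume $|H|\ge\omega$ and fix a transversal $R=\{r_k:k\in K\}$ for $N$ in $H$ with $r_e=e$. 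Since $R$ meets each clopen coset in exactly one point, $R$ is closed and discrete in $H$.

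The heart of the argument is the reduction: if $N$ has a closed generating suitable set $T$, then $S:=(R\setminus\{e\})\cup T$ is a closed generating suitable set for $H$. Indeed $T\subseteq N=q^{-1}(e)$, while $(R\setminus\{e\})\cap q^{-1}(e)=\emptyset$, so the trace of $S\cup\{e\}$ on $q^{-1}(e)$ is $T\cup\{e\}$ and its trace on every other clopen coset $q^{-1}(k)$ is the single point $r_k$; since the cosets form a clopen partition of $H$, both the discreteness of $S$ and the closedness of $S\cup\{e\}$ in $H$ follow coset-by-coset from the corresponding properties of $T$ in $N$ (together with the fact that singletons are closed). Finally $\langle S\rangle\supseteq\langle T\rangle=N$ and $q(\langle S\rangle)\supseteq\langle q(R\setminus\{e\})\rangle=\langle K\setminus\{e\}\rangle=K$, so $q(\langle S\rangle)=K$ and $N\subseteq\langle S\rangle$, whence $\langle S\rangle=H$.

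It then remains to produce a closed generating suitable set for $N$. Relabelling the interval $[\alpha,\tau)$, whose order type is $\tau$ because $\tau$ is an infinite cardinal, $U_\alpha$ is again a product of nontrivial discrete groups carrying the linear topology, so $N$ is a subgroup of a group of the same type, and I would proceed by transfinite induction on $|H|$: if $|N|\le\omega$ then $N$ is countable and lies in $\mathscr{S}_{cg}$ by \cite[Theorem 2.2]{CM}; otherwise one iterates the reduction inside $N$ (peeling off a new transversal whenever some projection $\pi'_\gamma$ satisfies $|\pi'_\gamma(N)|=|N|$), and, in the residual case where no projection of $N$ attains $|N|$, one builds the suitable set for $N$ directly from the valuation $g\mapsto\min\mathrm{supp}(g)$, choosing a generating ``staircase'' whose members have pairwise distinct leading coordinates that are cofinal in $\tau$; such a set is automatically discrete (distinct leading coordinates separate its points) and has $e$ as its only accumulation point. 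The main obstacle, and the reason the hypothesis $|\pi_\alpha(H)|=|H|$ is essential, is \emph{generation} rather than discreteness or closedness: one has to verify that every element of $H$ --- including one of infinite support --- is a \emph{finite} product of the chosen generators, and this works precisely because the transversal $R$ absorbs the entire ``initial'' part $\pi_\alpha(h)$ of an element $h$ into a single generator, leaving only the part of $h$ lying in $N$ to be handled recursively.
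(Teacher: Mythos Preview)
Your reduction step is sound: writing $N=H\cap U_\alpha$, the transversal $R$ is closed and discrete in $H$ because it meets each clopen $N$-coset exactly once, and if $T$ is a closed generating suitable set for $N$ then $S=(R\setminus\{e\})\cup T$ is indeed a closed generating suitable set for $H$. The problem is the next step: you now have to produce such a $T$ for $N$, and $N$ need not satisfy the hypothesis of the theorem. Your proposed transfinite induction on $|H|$ does not close up, for two reasons. First, nothing forces $|N|<|H|$ (we only know $|H|=|\pi_\alpha(H)|\cdot|N|$, and $|N|=|H|$ is perfectly possible), so ``iterate the reduction inside $N$'' is not a descent. Second, and more seriously, your ``residual case'' --- no projection of $N$ attains $|N|$ --- is precisely the difficult situation that the theorem's hypothesis is designed to exclude; it is the content of \cite[Problem~4.4]{ST2002}, which the paper treats as open in general and only resolves via a substantial chain of lemmas (the analogues of \cite[Lemmas~3.7--3.12]{ST2002}) under extra assumptions. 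Your ``staircase'' sketch does not address \emph{generation}: a family with pairwise distinct leading coordinates is easy to make discrete with $e$ as its unique accumulation point, but there is no reason an arbitrary $n\in N$, possibly with infinite support scattered across $\tau$, should be a \emph{finite} product of such elements. You yourself identify generation as the obstacle, but then do not overcome it in the residual case.

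The paper's proof sidesteps all of this. It never passes to $N$ at all: from $|\pi_\alpha(H)|=|H/(H\cap U_\alpha)|=|H|$ it reads off directly that $H$ cannot be covered by fewer than $|H|$ translates of the open subgroup $H\cap U_\alpha$, hence $|H|<b(H)$, and then invokes \cite[Lemma~3.5]{ST2002}, which says that $|H|<b(H)$ already suffices for $H\in\mathscr{S}_{cg}$. The point is that the hypothesis $|\pi_\alpha(H)|=|H|$ is exactly a statement about the index of boundedness of $H$ itself, and the cited lemma converts that into a closed generating suitable set in one stroke, without any recursion into subgroups where the hypothesis might fail.
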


\begin{proof}
For each basic open subgroup $U_{\alpha}=\prod_{\alpha\leq\beta\leq\tau}G_{\beta}$, the group $H$ cannot be covered by less than $|\pi_{\alpha}(H)|=|H/(H\cap U_{\alpha})|$ translates of its open subgroup $H\cap U_{\alpha}$; hence we have $|H|=|\pi_{\alpha}(H)|<b(H)$. Now, it follows from \cite[Lemma 3.5]{ST2002} that $H$ has a closed generating
suitable set.
\end{proof}

Indeed, by a similar remark of \cite[Remark 3.1]{ST2002}, we may assume that $\tau$ is regular. Let $\sum_{\alpha<\tau}G_{\alpha}=\{x\in G^{\star}: |\mbox{supp}(x)|\leq\aleph_{0}\}$, where $\mbox{supp}(x)=\{\alpha<\tau: x(\alpha)\neq e_{G_{\alpha}}\}$. In this section, we always assume  $\sum_{\alpha<\tau}G_{\alpha}$ with the subspace topology of $G^{\star}$, which is denoted by $G^{\star}(\omega)$. The following question is natural.

\begin{question}\label{q222}
It is true that every subgroup of $G^{\star}(\omega)$ has a suitable set?
\end{question}

Finally, we give answers to Question~\ref{q222} for the cases $\tau=\omega$ and $\tau>\omega_{1}$ respectively. First, we have the following propositions.

\begin{proposition}\label{pr9}
The group $G^{\star}(\omega)$ has a generating suitable set $S=\bigcup_{\alpha<\tau}G[\alpha]$, where $G[\alpha]=\sum_{\beta<\alpha}G_{\beta}$ for each $\alpha<\tau$.
\end{proposition}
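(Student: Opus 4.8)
\textbf{Proof proposal for Proposition~\ref{pr9}.}

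The plan is to verify the three defining properties of a generating suitable set for $S=\bigcup_{\alpha<\tau}G[\alpha]$, where $G[\alpha]=\sum_{\beta<\alpha}G_{\beta}$, working inside $G^{\star}(\omega)$ with its linear topology inherited from $G^{\star}$. Recall the basic open subgroups at the identity are $W_{\alpha}=U_{\alpha}\cap G^{\star}(\omega)$, where $U_{\alpha}=\prod_{\alpha\leq\beta<\tau}G_{\beta}$, and that $\tau$ may be assumed regular. A convenient observation to record first is that for any $x\in G^{\star}(\omega)$, since $|\mathrm{supp}(x)|\leq\aleph_{0}$ and $\tau$ is regular and uncountable (the case $\tau=\omega$ is handled separately later in the paper), there is a least ordinal $\rho(x)<\tau$ with $\mathrm{supp}(x)\subseteq\rho(x)$; thus each $G[\alpha]$ is itself a subgroup of $G^{\star}(\omega)$, and $x\in G[\rho(x)]\subseteq S$ means in fact $S=G^{\star}(\omega)$ as a set, so $\langle S\rangle=G^{\star}(\omega)$ holds trivially and $S$ generates. (If $\tau$ is merely required regular including possibly $\omega$, one notes $S=G^{\star}(\omega)$ directly from $\bigcup_{\alpha<\tau}G[\alpha]=\bigcup_{\alpha<\tau}\sum_{\beta<\alpha}G_{\beta}$.)

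Next I would check that $S$ is discrete. Fix $x\in S$ and let $\alpha=\rho(x)+1$, so $x\in G[\alpha]$ and $\mathrm{supp}(x)\subseteq\alpha$. Consider the coset neighborhood $xW_{\alpha}=x\cdot(U_{\alpha}\cap G^{\star}(\omega))$. If $y\in xW_{\alpha}\cap S$, then $y=x\cdot w$ with $\mathrm{supp}(w)\subseteq[\alpha,\tau)$, so $y$ agrees with $x$ on all coordinates below $\alpha$ and agrees with the identity nowhere forced below $\alpha$; in particular $y\restriction\alpha=x\restriction\alpha$. But $y\in S$ forces $\mathrm{supp}(y)\subseteq\gamma$ for some $\gamma<\tau$; this alone does not pin down $y=x$. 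So the right neighborhood to use is smaller: I would instead take the neighborhood $N_x$ of $x$ consisting of all $z\in G^{\star}(\omega)$ that agree with $x$ on coordinates in $\mathrm{supp}(x)$ and are $e$ on some fixed larger coordinate — more precisely, pick any $\delta$ with $\rho(x)\le\delta<\tau$, and let $N_x=\{z: z\restriction(\delta+1)=x\restriction(\delta+1)\}$, an open set since it is a union of cosets of $W_{\delta+1}$. Then $N_x\cap S$: any $z\in N_x$ has $\mathrm{supp}(z)\cap(\delta+1)=\mathrm{supp}(x)$, but $z$ may still have support above $\delta$. To kill that, use that $S=\bigcup_\alpha G[\alpha]$ is an \emph{increasing} union and argue that $x$ is isolated \emph{in} $S$ because any element of $G[\beta]$ for $\beta>\delta+1$ lying in $N_x$ must have its extra support in $[\delta+1,\beta)$; the point is that the subspace topology on $S$ makes each $G[\alpha]$ clopen in $S$ (it is a union of cosets of $W_\alpha$ intersected with $G[\alpha]$, and these $G[\alpha]$ are discrete since each $G_\beta$ is discrete and $G[\alpha]$ carries the box-type linear topology in which $W_{\alpha}\cap G[\alpha]=\{e\}$). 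So $\{x\}=N_x\cap G[\rho(x)+1]$ is open in $S$. This is the step I expect to require the most care: correctly identifying that $W_\alpha\cap G[\alpha]=\{e\}$, so that $G[\alpha]$ is discrete, and that $S$ is the increasing union of these clopen-in-$S$ discrete pieces, hence discrete.

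Finally I would verify $S\cup\{e\}=S$ (since $e\in G[\alpha]$ for every $\alpha$) is closed in $G^{\star}(\omega)$. Since we observed $S=G^{\star}(\omega)$ as a set in the relevant regime, closedness is automatic; but if one prefers to treat $S$ abstractly, the argument is: given $x\notin S$ one would need $\mathrm{supp}(x)$ cofinal in $\tau$, contradicting $|\mathrm{supp}(x)|\leq\aleph_0<\mathrm{cf}(\tau)$ when $\tau>\omega$, so there is nothing outside $S$ to separate. In all cases the three conditions hold and $\langle S\rangle=G^{\star}(\omega)$, so $S$ is a generating suitable set. The only genuine content is the discreteness verification, and the key technical fact underlying it is that the linear topology restricts on each truncation $G[\alpha]$ to the discrete topology, because the defining neighborhood filter collapses to $\{e\}$ there.
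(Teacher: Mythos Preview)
Your discreteness argument has a genuine gap. The fallback claim that each $G[\alpha]$ is \emph{clopen} in $S$ is false: $G[\alpha]$ is closed (a discrete subgroup of a Hausdorff group is closed) but not open in $S$. Indeed, by your own observation, for $\tau$ regular uncountable one has $S=G^{\star}(\omega)$, so ``open in $S$'' simply means ``open in $G^{\star}(\omega)$''; and for any $x\in G[\alpha]$ and any $\gamma<\tau$ the basic neighbourhood $xW_{\gamma}$ contains elements whose support meets $[\max\{\alpha,\gamma\},\tau)$, so $G[\alpha]$ has empty interior. More seriously, once you have $S=G^{\star}(\omega)$ you are asking whether $G^{\star}(\omega)$ itself is discrete, and it is not: every basic neighbourhood $W_{\alpha}$ of $e$ is infinite. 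Your first instinct---that $xW_{\alpha}$ fails to isolate $x$---was detecting a real obstruction, not a removable technicality; no refinement of the neighbourhood will make the argument go through in the form you have set up.

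For comparison, the paper does not try to isolate individual points of $S$ directly. It fixes $\alpha<\tau$, notes $G[\alpha]\cap U_{\alpha}=\{e\}$ (so $G[\alpha]$ is discrete, hence closed), then asserts $S\setminus U_{\alpha}\subseteq G[\alpha]$, concludes that $S\setminus U_{\alpha}$ is closed and discrete for every $\alpha$, and hence that the only possible accumulation point of $S$ is $e$. The structural manoeuvre you are missing is this localization through the clopen subgroups $U_{\alpha}$: one controls $S$ by showing each $S\setminus U_{\alpha}$ sits inside a single discrete $G[\alpha]$, rather than by writing $S$ as an increasing union of open discrete pieces. That said, with $S$ exactly as written the inclusion $S\setminus U_{\alpha}\subseteq G[\alpha]$ also fails (any $x$ with $\mathrm{supp}(x)=\{0,\alpha\}$ lies in $S\setminus U_{\alpha}$ but not in $G[\alpha]$), so before attempting to reproduce the paper's line you should first pin down precisely which subset is intended to play the role of the suitable set.
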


\begin{proof}
Clearly, $S$ is algebraically generates the group $G^{\star}(\omega)$. Next, we prove that $S$ is discrete and $S\cup\{e\}$ is closed in $G^{\star}(\omega)$. Indeed, fix any $\alpha<\tau$. Then $G[\alpha]$ is discrete in $G^{\star}(\omega)$ since $G[\alpha]\cap U_{\alpha}=\{e\}$, so $G[\alpha]$ is closed in $G^{\star}(\omega)$. Then, $S\setminus U_{\alpha}\subset G[\alpha]$, hence $S\setminus U_{\alpha}$ is closed and discrete in $G^{\star}(\omega)$. Since $G^{\star}(\omega)$ is Hausdorff, it follows that $S$ has no accumulations points in $G^{\star}(\omega)$ except for $e$. Therefore, $S$ is a generating suitable set for $G^{\star}(\omega)$.
\end{proof}

By a similar proof of \cite[Proposition 3.3]{ST2002}, we have the following proposition.

\begin{proposition}\label{pr8}
The group $G^{\star}(\omega)$ is topologically orderable; thus it is hereditary paracompact.
\end{proposition}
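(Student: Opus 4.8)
Following the proof of \cite[Proposition]{ST2002}, the plan is to equip the underlying set of $G^{\star}(\omega)=\sum_{\alpha<\tau}G_{\alpha}$ with a linear order whose order topology is the topology of $G^{\star}(\omega)$, and then to invoke \cite[Theorem 8]{NP1}, by which topologically orderable topological groups are hereditarily paracompact. I would take a lexicographic order. Fix, for every $\alpha<\tau$, a linear order $\prec_{\alpha}$ on the discrete group $G_{\alpha}$ (the precise requirement on $\prec_{\alpha}$ is discussed below); since $\tau$ is well ordered, for $x\ne y$ in $\sum_{\alpha<\tau}G_{\alpha}$ the ordinal $\delta(x,y)=\min\{\alpha<\tau:x(\alpha)\ne y(\alpha)\}$ is defined, and I set $x\prec y$ iff $x(\delta(x,y))\prec_{\delta(x,y)}y(\delta(x,y))$. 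A routine first-difference argument shows $\prec$ is a linear order. Writing $V_{\alpha}=U_{\alpha}\cap\sum_{\beta<\tau}G_{\beta}$, one has $x+V_{\alpha}=\{z:z(\beta)=x(\beta)\text{ for all }\beta<\alpha\}$, and $\{V_{\alpha}:\alpha<\tau\}$ is a decreasing chain of open subgroups forming a local base at $e$.

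That the order topology equals the group topology splits into two inclusions. One is easy: for $a\in G^{\star}(\omega)$, the ray $\{z:z\succ a\}$ equals the union, over $\gamma<\tau$ and $c\succ_{\gamma}a(\gamma)$, of the cylinders $\{z:z(\beta)=a(\beta)\ (\beta<\gamma),\ z(\gamma)=c\}$, and each such cylinder is a translate of $V_{\gamma+1}$, hence open; similarly $\{z:z\prec b\}$ is open; since these rays subbase the order topology, every order-open set is open in $G^{\star}(\omega)$. For the other inclusion I would check that each basic neighbourhood $x+V_{\alpha}$ is order-convex — if $w$ lay strictly $\prec$-between two elements of $x+V_{\alpha}$ but disagreed with $x$ somewhere below $\alpha$, then at the first coordinate of disagreement $w$ would be simultaneously below, or simultaneously above, both of them, a contradiction — and then that $x+V_{\alpha}$ is order-open.

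The genuinely delicate step, and the one I expect to be the main obstacle, is showing each $x+V_{\alpha}$ is order-open; here the value of $\tau$ matters. The candidate $\prec$-least element of $x+V_{\alpha}$ is $x$ restricted to $[0,\alpha)$ followed by the string of $\prec_{\beta}$-least elements of the $G_{\beta}$ for $\beta\ge\alpha$, and this lies in $\sum_{\beta<\tau}G_{\beta}$ only if that string equals $e_{\beta}$ for all but countably many $\beta$; dually for the greatest element. When $\tau$ is uncountable (in particular $\tau>\omega_{1}$) I would fix a partition $\tau=A\sqcup B$ with $A\cap[\alpha,\tau)$ and $B\cap[\alpha,\tau)$ uncountable for every $\alpha<\tau$, and choose $\prec_{\beta}$ so that $e_{\beta}$ is not the $\prec_{\beta}$-least element of $G_{\beta}$ for $\beta\in A$ and not the $\prec_{\beta}$-greatest for $\beta\in B$ (possible since each $G_{\beta}$ is non-trivial); then no $x+V_{\alpha}$ has a $\prec$-least or $\prec$-greatest element, and a convex set with neither extreme is order-open. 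When $\tau=\omega$ this device is unavailable, since $\sum_{n<\omega}G_{n}=\prod_{n<\omega}G_{n}$ and supports are at most countable; instead I would take each $\prec_{n}$ order-discrete (e.g.\ of type $\mathbb{Z}$, of type $\omega+\omega^{*}$, or $|G_{n}|$ copies of $\mathbb{Z}$ laid end to end for a larger $G_{n}$), which forces the $\prec$-least and $\prec$-greatest elements of a cylinder $x+V_{k}$, when they exist, to have an immediate predecessor, resp.\ successor, in $\prod_{n<\omega}G_{n}$ (or to be global extrema), again making the cylinder order-open. Everything else — well-definedness and transitivity of $\prec$, convexity of the cylinders, the decomposition of the rays, and the passage to hereditary paracompactness via \cite[Theorem 8]{NP1} — is routine. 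More quickly, one may bypass the explicit order altogether: $G^{\star}(\omega)$ carries the decreasing chain $\{V_{\alpha}:\alpha<\tau\}$ of open subgroups as a local base at $e$, hence it is topologically orderable by the result of \cite{NP1} used in Section~4, and then hereditarily paracompact by \cite[Theorem 8]{NP1}.
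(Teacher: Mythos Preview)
Your proposal is correct and aligns with the paper's approach: the paper gives no details at all, merely writing ``By a similar proof of \cite[Proposition]{ST2002}, we have the following proposition,'' so your explicit lexicographic construction is precisely the argument being deferred to. Your closing shortcut --- that the decreasing chain $\{V_{\alpha}:\alpha<\tau\}$ of open subgroups already forces topological orderability by the Nyikos--Reichel characterization \cite{NP1}, bypassing the order construction entirely --- is in fact cleaner than the route the paper points to, and is consistent with how the paper itself invokes \cite{NP1} elsewhere in Section~4.
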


By \cite[Theorem 2,4]{ST2002}, Propositions~\ref{pr9} and~\ref{pr8}, we have the following theorem.

\begin{theorem}
Each dense subgroup of $G^{\star}(\omega)$ has a suitable set.
\end{theorem}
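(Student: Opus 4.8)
The plan is to deduce the theorem by combining the two preceding propositions with the transfer result for dense subgroups of linearly orderable topological groups, so that no new construction is needed.

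First I would record that, by Proposition~\ref{pr8}, the group $G^{\star}(\omega)$ is topologically orderable; in particular its topology is the order topology of a total order, so $G^{\star}(\omega)$ is a linearly orderable topological group, and moreover it is hereditarily paracompact. Next, by Proposition~\ref{pr9}, $G^{\star}(\omega)$ has a (generating) suitable set, namely $S=\bigcup_{\alpha<\tau}G[\alpha]$ with $G[\alpha]=\sum_{\beta<\alpha}G_{\beta}$. Thus $G^{\star}(\omega)$ is a linearly orderable topological group admitting a suitable set, which is exactly the hypothesis we want to exploit.

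It then remains to push the suitable set down to an arbitrary dense subgroup, and for this I would invoke the Corollary following Proposition~\ref{pp3} (that is, \cite[Theorem 2.4]{ST2002}): every dense subgroup of a linearly orderable topological group possessing a suitable set again possesses a suitable set. Applying this with the ambient group $G^{\star}(\omega)$ immediately yields the conclusion. Alternatively, one can argue directly, bypassing \cite{ST2002}: since $G^{\star}(\omega)$ is hereditarily paracompact it is hereditarily collectionwise Hausdorff; by Proposition~\ref{pp3} every dense subgroup $H$ of $G^{\star}(\omega)$ is transfinite sequentially dense in $G^{\star}(\omega)$; and then Theorem~\ref{tt56}, fed with the suitable set $S$ of $G^{\star}(\omega)$, outputs a suitable set $S^{\ast}$ for $H$.

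As for the difficulty, there is essentially no fresh obstacle at this point: the substantive work has already been absorbed into Proposition~\ref{pr8} (the order-theoretic description of $G^{\star}(\omega)$, following the argument of the cited \cite[Proposition]{ST2002}) and into Proposition~\ref{pr9} (checking that $S\cup\{e\}$ is closed and that $S$ is discrete in $G^{\star}(\omega)$, via $G[\alpha]\cap U_{\alpha}=\{e\}$). Given those, the present theorem is a one-line consequence; the only point worth double-checking is that the hypotheses of \cite[Theorem 2.4]{ST2002} (equivalently, of Theorem~\ref{tt56} through Proposition~\ref{pp3}) are genuinely met — namely that one needs only a possibly non-closed suitable set of the ambient group and that ``dense subgroup'' is precisely the class covered — which is indeed the case.
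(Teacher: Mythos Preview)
Your proposal is correct and matches the paper's own argument essentially verbatim: the paper simply states that the theorem follows from \cite[Theorem 2.4]{ST2002} together with Propositions~\ref{pr9} and~\ref{pr8}. Your alternative route via Theorem~\ref{tt56} and Proposition~\ref{pp3} is just the paper's own re-proof of \cite[Theorem 2.4]{ST2002}, so nothing new is being added there either.
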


The following lemma is similar to \cite[Lemma 3.7]{ST2002}, so we omit to the proof.

\begin{lemma}\label{l11}
Let $H$ be a subgroup of $G^{\star}(\omega)$. Then, for each $\alpha<\tau$ and $x\in\pi_{\alpha}(H)$, there exists a closed discrete subset $F_{\alpha, x}$ of $H\cap \pi_{\alpha}^{-1}(x)$ such that $|F_{\alpha, x}|\leq\tau$ and for any $\beta<\tau$, $H\cap xU_{\beta}\neq\emptyset$ if and only if $F_{\alpha, x}\cap xU_{\beta}\neq\emptyset$. Moreover,

\smallskip
(i) if $x\in \overline{H}\setminus H$, then $x$ is the unique accumulation point of $F_{\alpha, x}$ in $G^{\star}(\omega)$.

\smallskip
(ii)  if $H$ is closed, then we can choose $F_{\alpha, x}$ such that $|F_{\alpha, x}|<\tau$.
\end{lemma}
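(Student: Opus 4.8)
The plan is to construct $F_{\alpha,x}$ by hand: for each ``depth'' $\beta$ at which some element of $H$ first departs from the coset $\pi_{\alpha}^{-1}(x)=xU_{\alpha}$, I record exactly one witnessing element. Throughout fix $\alpha<\tau$ and $x\in\pi_{\alpha}(H)$, identify $x$ with its canonical lift in $G^{\star}(\omega)$ (the extension of $x$ by the neutral element on $[\alpha,\tau)$), and recall that the basic neighbourhoods of $x$ are the clopen cosets $xU_{\beta}$, $\beta<\tau$, with $g\in xU_{\beta}$ for $g\in\pi_{\alpha}^{-1}(x)$ precisely when $g(\gamma)=e$ for all $\alpha\le\gamma<\beta$; note also that $\pi_{\alpha}^{-1}(x)$ is clopen in $G^{\star}(\omega)$, so ``closed discrete in $H\cap\pi_{\alpha}^{-1}(x)$'' is the same as ``closed discrete in $H$''. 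If $x\in H$, I would simply take $F_{\alpha,x}=\{x\}$: it is a singleton contained in $H\cap\pi_{\alpha}^{-1}(x)$, it meets every $xU_{\beta}$ (and so does $H$), and (i) is vacuous. So the real work is in the case $x\notin H$, which I assume from now on.

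For $h\in H\cap\pi_{\alpha}^{-1}(x)$ put $\mu(h)=\min\{\gamma\ge\alpha:h(\gamma)\ne e\}$; since $h\ne x$ has countable support this is a well-defined ordinal with $\alpha\le\mu(h)<\tau$, and $h\in xU_{\beta}$ iff $\mu(h)\ge\beta$. Set $V_{\beta}=\{h\in H\cap\pi_{\alpha}^{-1}(x):\mu(h)=\beta\}$ and $\gamma=\gamma(\alpha,x)=\sup\{\mu(h)+1:h\in H\cap\pi_{\alpha}^{-1}(x)\}\le\tau$. The first step is the elementary bookkeeping that $H\cap xU_{\beta}\ne\emptyset$ iff $\beta<\gamma$, and that $V_{\beta}\ne\emptyset$ for cofinally many $\beta<\gamma$. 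For each such $\beta$ I choose $h_{\beta}\in V_{\beta}$ and let $F_{\alpha,x}=\{h_{\beta}:\beta<\gamma,\ V_{\beta}\ne\emptyset\}\subseteq H\cap\pi_{\alpha}^{-1}(x)$; then $|F_{\alpha,x}|\le|\gamma|\le\tau$.

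Next I would verify the witnessing property: for $\alpha<\beta<\gamma$ every $h\in H\cap xU_{\beta}$ has $\beta\le\mu(h)<\gamma$, so $h_{\mu(h)}\in V_{\mu(h)}\subseteq xU_{\beta}$ and hence $F_{\alpha,x}\cap xU_{\beta}\ne\emptyset$; for $\beta\ge\gamma$ both sides are empty; and for $\beta\le\alpha$ both sides contain the nonempty set $F_{\alpha,x}$. The core of the proof is then the closure/discreteness analysis. Discreteness is quick: $h_{\beta}U_{\beta+1}$ isolates $h_{\beta}$ in $F_{\alpha,x}$, since any $h_{\beta'}$ with $\beta'\ne\beta$ differs from $h_{\beta}$ at coordinate $\min\{\beta,\beta'\}<\beta+1$. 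For the closure I would show that $x$ is the only possible accumulation point of $F_{\alpha,x}$ in $G^{\star}(\omega)$: a point $g$ accumulating $F_{\alpha,x}$ lies in $\pi_{\alpha}^{-1}(x)$ (all $h_{\beta}$ agree with $x$ on $[0,\alpha)$ and some $h_{\beta}$ lies in $gU_{\alpha}$); and if $g\ne x$, then for each $\delta>\mu(g)$ at most one $h_{\beta}$ lies in $gU_{\delta}$ --- any such $\beta$ has $\beta\le\mu(g)<\delta$, and two $h_{\beta},h_{\beta'}$ in $gU_{\delta}$ with $\beta,\beta'<\delta$ coincide --- so as $\delta$ ranges over $\tau$ the nested cosets $gU_{\delta}$ pin down one fixed $h_{\beta}$ that must equal $g$, contradicting that every neighbourhood of $g$ meets $F_{\alpha,x}\setminus\{g\}$. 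Finally, $x$ is itself an accumulation point of $F_{\alpha,x}$ iff $\gamma=\tau$, because $xU_{\delta}\cap F_{\alpha,x}=\{h_{\beta}:\beta\ge\delta,\ V_{\beta}\ne\emptyset\}$ is nonempty for all $\delta<\tau$ exactly when $\gamma=\tau$.

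These observations finish everything. If $\gamma<\tau$, then $|F_{\alpha,x}|<\tau$, the set $F_{\alpha,x}$ is closed and discrete already in $G^{\star}(\omega)$, and $x\notin\overline{H}$, so (i) is vacuous. If $\gamma=\tau$, then $x\in\overline{H}\setminus H$ (as $H$ meets every $xU_{\beta}$) is the unique accumulation point of $F_{\alpha,x}$, which is exactly (i); moreover $F_{\alpha,x}$ is closed discrete in $H$ since the only closure point it acquires in $G^{\star}(\omega)$ lies outside $H$. For (ii): if $H$ is closed then $\gamma=\tau$ would force $x\in\overline{H}=H$, contrary to $x\notin H$, so only the case $\gamma<\tau$ survives and $|F_{\alpha,x}|<\tau$. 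The main obstacle I foresee is precisely the accumulation-point step: because $G^{\star}(\omega)$ has no non-trivial convergent sequences when $\tau>\omega$, one cannot argue via sequences and must instead control, coordinate by coordinate, which $h_{\beta}$ can simultaneously sit inside a small neighbourhood of a prospective limit.
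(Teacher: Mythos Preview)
Your proof is correct. The paper itself omits the proof of this lemma entirely, stating only that it ``is similar to \cite[Lemma 3.7]{ST2002}''; your construction---choosing one witness $h_{\beta}$ in each nonempty level set $V_{\beta}=\{h\in H\cap\pi_{\alpha}^{-1}(x):\mu(h)=\beta\}$ and analysing accumulation via the depth function $\mu$---is exactly the natural argument and almost certainly the one intended. Two small remarks: your identification of $x$ with its canonical lift is legitimate because $x=\pi_{\alpha}(h)$ for some $h\in G^{\star}(\omega)$, so $\mbox{supp}(x)\subseteq\mbox{supp}(h)$ is countable; and the inequality $|F_{\alpha,x}|\le|\gamma|<\tau$ in case $\gamma<\tau$ uses that $\tau$ is a cardinal, which the paper assumes throughout.
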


\begin{lemma}\label{l33}
Let $H$ be a closed subgroup of $G^{\star}(\omega)$, where $\tau>\omega_{1}$. Suppose that $|\pi_{\alpha}(H)|<\tau$ for any $\alpha<\tau$ and $D=\{d_{\alpha}: \alpha<\tau\}$ is a subset of $H$ such that $\langle D\rangle=H$. There is a closed unbounded subset $B$ of $\tau$ such that the following conditions are satisfied for any $\alpha\in B$:

\smallskip
(a) $\mbox{supp}(d_{\gamma})\subset \alpha$ whenever $\gamma<\alpha$;

\smallskip
(b) $\pi_{\alpha}(H)=\langle D_{\alpha}\rangle$, where $D_{\alpha}=\{d_{\gamma}: \gamma<\tau\}$.
\end{lemma}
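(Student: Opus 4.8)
\textbf{Proof proposal for Lemma~\ref{l33}.}

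The plan is to build $B$ as the set of ``closure points'' of a natural continuous, non-decreasing ordinal function, using a standard closing-off argument, and then verify (a) and (b) on that club. First I would observe that because $\tau>\omega_{1}$ is regular (we have reduced to $\tau$ regular) and each element $x$ of $G^{\star}(\omega)$ satisfies $|\mathrm{supp}(x)|\leq\aleph_{0}<\tau$, the function $f:\tau\to\tau$ given by $f(\gamma)=\sup(\mathrm{supp}(d_{\gamma}))+1$ is well-defined and maps into $\tau$. The set $C_{1}=\{\alpha<\tau: f(\gamma)<\alpha \text{ for all }\gamma<\alpha\}$ of closure ordinals of $f$ is closed and unbounded in $\tau$; every $\alpha\in C_{1}$ satisfies condition (a), since $\gamma<\alpha$ forces $\mathrm{supp}(d_{\gamma})\subset f(\gamma)\subset\alpha$.

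Next I would handle (b). For each $\alpha<\tau$ set $H_{\alpha}=\pi_{\alpha}(H)$, an abelian (or at least discrete-summand) subgroup with $|H_{\alpha}|<\tau$ by hypothesis; fix an enumeration $H_{\alpha}=\{h(\alpha,\xi):\xi<|H_{\alpha}|\}$. Since $\langle D\rangle=H$ and $\pi_{\alpha}$ is a homomorphic retraction, $\pi_{\alpha}(H)=\langle\pi_{\alpha}(D)\rangle=\langle\{\pi_{\alpha}(d_{\gamma}):\gamma<\tau\}\rangle$, so each $h(\alpha,\xi)$ is a finite product of elements $\pi_{\alpha}(d_{\gamma})^{\pm1}$; let $g(\alpha,\xi)<\tau$ be large enough that all indices $\gamma$ used in one such fixed expression are below $g(\alpha,\xi)$, and put $g(\alpha)=\sup_{\xi<|H_{\alpha}|}g(\alpha,\xi)$, which is $<\tau$ by regularity of $\tau$ because $|H_{\alpha}|<\tau$. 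The set
\[
C_{2}=\{\alpha<\tau:\ g(\beta)<\alpha\ \text{for all}\ \beta<\alpha\}
\]
is again club. I claim every $\alpha\in C_{1}\cap C_{2}$ satisfies (b). Indeed, for $\alpha\in C_{2}$ and any $h\in H_{\alpha}$, write $h=\pi_{\alpha}(h')$ for some $h'\in H$; since $H=\bigcup_{\beta<\tau}\pi_{\beta}^{-1}\pi_{\beta}(H)$ need not be how we argue, I instead use that $\pi_{\beta}\circ\pi_{\alpha}=\pi_{\beta}$ for $\beta\le\alpha$ together with the support condition (a): for $\alpha\in C_{1}$, every $d_{\gamma}$ with $\gamma<\alpha$ has $\mathrm{supp}(d_{\gamma})\subset\alpha$, hence $\pi_{\alpha}(d_{\gamma})=d_{\gamma}$, so $D_{\alpha}:=\{d_{\gamma}:\gamma<\alpha\}\subset H$ and $\langle D_{\alpha}\rangle\subset H\cap G[\alpha]$ maps identically under $\pi_{\alpha}$; conversely, any $h(\alpha,\xi)\in H_{\alpha}$ is, by choice of $g(\alpha,\xi)<\alpha$, a product of $\pi_{\alpha}(d_{\gamma})=d_{\gamma}$ with $\gamma<\alpha$, so $h(\alpha,\xi)\in\langle D_{\alpha}\rangle$. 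Thus $\pi_{\alpha}(H)=\langle D_{\alpha}\rangle$, which is (b). Setting $B=C_{1}\cap C_{2}$, an intersection of two clubs and hence a club, finishes the argument.

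The main obstacle, and the only place requiring care, is the interplay between the \emph{support} condition and the \emph{generation} condition: one must arrange (a) first so that on $C_{1}$ the retraction $\pi_{\alpha}$ fixes every $d_{\gamma}$ with $\gamma<\alpha$, which is exactly what converts ``$h(\alpha,\xi)$ is a product of $\pi_{\alpha}(d_{\gamma})$'s with small indices'' into ``$h(\alpha,\xi)\in\langle\{d_{\gamma}:\gamma<\alpha\}\rangle$'' without any leftover tail coordinates. A secondary point to check is that $|H_{\alpha}|<\tau$ is genuinely used (to keep $g(\alpha)$ below $\tau$), and that in the closed case of Lemma~\ref{l11}(ii) or under our standing hypothesis this bound is available; since it is assumed outright in the statement, no extra work is needed. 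Everything else — that a closure set of a $\tau\to\tau$ function is club, that a finite intersection of clubs is club — is standard for regular uncountable $\tau$, and I would cite it rather than reprove it. (Note: the statement of (b) as printed writes $D_{\alpha}=\{d_{\gamma}:\gamma<\tau\}$, which is surely a typo for $\gamma<\alpha$; the proof above establishes the intended claim.)
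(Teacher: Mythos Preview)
Your closing-off argument contains a genuine gap at the crucial step. You define $C_{2}=\{\alpha<\tau:g(\beta)<\alpha\text{ for all }\beta<\alpha\}$ and then assert that for $\alpha\in C_{1}\cap C_{2}$ and any $h(\alpha,\xi)\in H_{\alpha}$ one has ``by choice of $g(\alpha,\xi)<\alpha$''. But membership in $C_{2}$ bounds $g(\beta)$ only for $\beta<\alpha$; it says nothing about $g(\alpha)$ itself. So you have no control over the indices $\gamma$ appearing in your chosen representation of $h(\alpha,\xi)$ as a word in the $\pi_{\alpha}(d_{\gamma})$'s, and the deduction $h(\alpha,\xi)\in\langle D_{\alpha}\rangle$ breaks down. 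The obvious attempted repair --- restrict to $\alpha$ with $\mathrm{cf}(\alpha)>\omega$, so that the countable support of $h$ lies inside some $\beta<\alpha$ and then invoke $g(\beta)<\alpha$ --- does not work either: the resulting expression for $h$ is in terms of $\pi_{\beta}(d_{\gamma})$ with $\gamma<\alpha$, and these need not equal $d_{\gamma}$ (they may disagree on $[\beta,\alpha)$), nor need they lie in $\langle D_{\alpha}\rangle$. At root, condition (b) forces $\pi_{\alpha}(H)\subset H$, and your construction never produces witnesses in $H$ for the elements of $\pi_{\alpha}(H)$.

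The paper's proof supplies exactly this missing ingredient via Lemma~\ref{l11}: for each $x\in\pi_{\alpha}(H)$ one has a set $F_{\alpha,x}\subset H\cap\pi_{\alpha}^{-1}(x)$ of size $<\tau$ with the approximation property that $H\cap xU_{\beta}\neq\emptyset$ iff $F_{\alpha,x}\cap xU_{\beta}\neq\emptyset$ for every $\beta$. The closing-off is then performed on the sets $\bigcup_{x}F_{\alpha,x}$ along an $\omega_{1}$-chain, so that the limit ordinal $\alpha^{\flat}$ has cofinality $\omega_{1}$. For $x\in\pi_{\alpha^{\flat}}(H)$ one first traps $\mathrm{supp}(x)$ inside some $\alpha_{\gamma_{0}}$ (using $\mathrm{cf}(\alpha^{\flat})=\omega_{1}$), then the approximation property of $F_{\alpha_{\gamma_{0}},x}$ yields a $z\in F_{\alpha_{\gamma_{0}},x}\cap xU_{\alpha^{\flat}}$; since $z\in\langle D_{\alpha^{\flat}}\rangle$ has support in $\alpha^{\flat}$ and agrees with $x$ below $\alpha^{\flat}$, one gets $z=x\in\langle D_{\alpha^{\flat}}\rangle$. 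Both the $F$-sets and the $\omega_{1}$-iteration are essential; a single club intersection of the type you propose does not suffice.
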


\begin{proof}
Let $B$ be the set of all ordinals $\alpha<\tau$ which satisfy (a) and (b) of the lemma. Clearly, $B$ is closed. Next, we prove that $B$ is unbounded. For each $\alpha<\tau$, put $D_{\alpha}=\bigcup\{F_{\alpha, x}: x\in \pi_{\alpha}(H)\}$, where each $F_{\alpha, x}$ satisfies (ii) of Lemma~\ref{l11}. First, following claim is obvious since $|\pi_{\alpha}(H)|<\tau$.

{\bf Claim:} For each $\alpha<\tau$, the cardinality of $D_{\alpha}$ is less than $\tau$ and there exists an ordinal $\alpha^{\ast}$ with $\alpha<\alpha^{\ast}<\tau$ such that $D_{\alpha}\subset \langle D_{\alpha^{\ast}}\rangle$ and $\mbox{supp}(d_{\beta})\subset\alpha^{\ast}$ for each $\beta\leq\alpha$.

Now take any ordinal $\alpha_{0}<\tau$. By Claim, we can define a sequence $\{\alpha_{\gamma}: \gamma\in\omega_{1}\}$ such that the following conditions hold:

\smallskip
(1) $\alpha_{\gamma}<\tau$ for each $\gamma\in\omega_{1}$;

\smallskip
(2) if $\gamma$ has a predecessor, then $\alpha_{\gamma}=\alpha_{\gamma-1}^{\ast}$;

\smallskip
(3) if $\gamma$ is limit ordinal, then $\alpha_{\gamma}=(\bigcup_{\kappa<\gamma}\alpha_{\kappa}^{\ast})^{\ast}$.

Clearly, we have $\alpha_{\beta}>\alpha_{\gamma}$ if $\beta>\gamma$. Now put $\alpha^{\flat}=\sup\{\alpha_{\gamma}: \gamma\in\omega_{1}\}$. Then $\alpha_{0}<\alpha^{\flat}<\tau$ and $\alpha^{\flat}$ satisfies (a) of the lemma. Next we only need to prove that $\alpha^{\flat}$ satisfies (b). Indeed, take any $x\in\pi_{\alpha^{\flat}}(H)$. Then there exists $h\in H$ such that $x=\pi_{\alpha^{\flat}}(h)$. Clearly, we have $\mbox{supp}(x)\subset \alpha^{\flat}$. Since $\alpha^{\flat}=\sup\{\alpha_{\gamma}: \gamma\in\omega_{1}\}$ and $\mbox{supp}(x)$ is countable, there exists $\gamma_{0}<\omega_{1}$ such that $\mbox{supp}(x)\subset\alpha_{\gamma_{0}}$. Therefore, $x=\pi_{\alpha_{\gamma_{0}}}(h)\in \pi_{\alpha_{\gamma_{0}}}(H)$. Then, since $y\in H\cap xU_{\alpha^{\flat}}\neq\emptyset$, it follows from our choice of the set $F_{\alpha_{\gamma_{0}}, x}$ that $F_{\alpha_{\gamma_{0}}, x}\cap xU_{\alpha^{\flat}}\neq\emptyset$. Choose any point $z\in F_{\alpha_{\gamma_{0}}, x}\cap xU_{\alpha^{\flat}}$. Hence $z\in D_{\alpha_{\gamma_{0}}}\subset\langle D_{\alpha_{(\gamma_{0}+1)}}\rangle\subset\langle D_{\alpha^{\flat}}\rangle$, then $\mbox{supp}(z)\subset \alpha^{\flat}$ by (a). Moreover, since $z\in xU_{\alpha^{\flat}}$, we have $x=\pi_{\alpha^{\flat}}(z)$, thus $x=z$. Hence we conclude that $\pi_{\alpha^{\flat}}(H)=\langle D_{\alpha^{\flat}}\rangle$. Then $\alpha^{\flat}\in B$ and $\alpha_{0}<\alpha^{\flat}$, hence $B$ is unbounded in $\tau$.
\end{proof}

\begin{lemma}\label{l22}
Let $H$ be a subgroup of $G^{\star}(\omega)$ such that $|\pi_{\alpha}(H)|<|H|$ for each $\alpha<\tau$. Then, for each $\alpha<\tau$, we have $b(H\cap U_{\alpha})=|H|=b(H)$.
\end{lemma}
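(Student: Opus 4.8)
The plan is to reduce the lemma to the single assertion that $b(G)=|G|$ for an appropriate class of subgroups, and then to settle that by computing $b$ from the completion. For the reduction: for each $\beta<\tau$ the subgroup $H\cap U_{\beta}$ is open in $H$, the assignment $h(H\cap U_{\beta})\mapsto\pi_{\beta}(h)$ identifies $H/(H\cap U_{\beta})$ with $\pi_{\beta}(H)$, and $\{H\cap U_{\beta}:\beta<\tau\}$ is a base of open subgroups at $e$ in $H$ while $\{H\cap U_{\beta}:\alpha\le\beta<\tau\}$ is one in $H\cap U_{\alpha}$. Since $[H:H\cap U_{\alpha}]=|\pi_{\alpha}(H)|<|H|$, writing $H$ as the union of its $|\pi_{\alpha}(H)|$ cosets modulo $H\cap U_{\alpha}$ gives $|H\cap U_{\alpha}|=|H|$; and for $\alpha\le\beta<\tau$ one has $|\pi_{\beta}(H\cap U_{\alpha})|\le|\pi_{\beta}(H)|<|H|=|H\cap U_{\alpha}|$. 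Thus $H\cap U_{\alpha}$ again satisfies the hypothesis of the lemma with $|H\cap U_{\alpha}|$ in place of $|H|$, so it suffices to prove $b(G)=|G|$ for every subgroup $G$ of $G^{\star}(\omega)$ all of whose projections $\pi_{\gamma}(G)$ have cardinality $<|G|$; applying this to $H$ and to $H\cap U_{\alpha}$ yields both stated equalities.

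To prove $b(G)=|G|$ I would use the facts about $b$ recorded in \cite{ST2002}: that $|G|\le b(G)$ always, and that $b(G)$ equals the cardinality of the Raikov completion $\widetilde G$ of $G$ (in particular $b(G)=|G|$ whenever $G$ is complete). Using that $\tau>\omega_{1}$ is regular, one checks that $G^{\star}(\omega)$ is closed in $G^{\star}$: an element whose support has size $\ge\omega_{1}$ has an initial segment of support of size $\omega_{1}$ lying below some ordinal $<\tau$, and this separates it from $G^{\star}(\omega)$. Hence $\widetilde G$ is the closure of $G$ in $G^{\star}$, it still consists of countably supported elements, and $\pi_{\gamma}(\widetilde G)=\pi_{\gamma}(G)$ for every $\gamma<\tau$ (since $\pi_{\gamma}(G)$ lies in the discrete closed subgroup $\{x:\mbox{supp}(x)\subseteq\gamma\}$ of $G^{\star}$). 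Now split into cases. If the supports of the elements of $G$ are bounded in $\tau$, then $G$ is contained in one of these discrete subgroups, so $G$ is discrete and $b(G)=|\widetilde G|=|G|$. If they are cofinal in $\tau$, then $|G|\ge\mathrm{cf}(\tau)=\tau$; writing $\widetilde G=\bigcup_{\gamma<\tau}\{x\in\widetilde G:\mbox{supp}(x)\subseteq\gamma\}$ as an increasing $\tau$-indexed union of pieces, each of which injects via $\pi_{\gamma}$ into $\pi_{\gamma}(G)$ and hence has cardinality $<|G|$, gives $|\widetilde G|\le\tau\cdot\sup_{\gamma<\tau}|\pi_{\gamma}(G)|\le|G|$. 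In either case $b(G)=|\widetilde G|=|G|$.

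I expect the substantive point to be the identification $b(G)=|\widetilde G|$ — that is, isolating exactly which properties of the invariant $b$ from \cite{ST2002} are used and confirming they are available in this form; the support/cofinality bookkeeping is then routine. A secondary subtlety is the case split: the estimate $|\widetilde G|\le\tau\cdot\sup_{\gamma}|\pi_{\gamma}(G)|$ only delivers $\le|G|$ once $\tau\le|G|$ is known, and this is precisely what cofinality of the supports provides, while in the opposite case $G$ collapses to a discrete group and the conclusion is immediate.
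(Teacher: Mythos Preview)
Your reduction step is correct and actually cleaner than the paper's organization: verifying that $H\cap U_{\alpha}$ again satisfies the projection hypothesis (with $|H\cap U_{\alpha}|=|H|$) and then proving a single statement ``$b(G)=|G|$'' is a tidy way to handle both equalities at once.

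The gap is in the second paragraph. Neither of the two ``facts about $b$'' you invoke is true. The inequality $|G|\le b(G)$ fails for every infinite precompact group (where $b(G)=\omega$), and the identity $b(G)=|\widetilde G|$ fails for the same reason; it even fails inside the ambient group $G^{\star}$ itself. For instance, with $\tau=\omega_{2}$ and each $G_{\alpha}=\mathbb{Z}/2$, one has $[G^{\star}:U_{\alpha}]=2^{|\alpha|}\le 2^{\aleph_{1}}$ for all $\alpha<\omega_{2}$, so $b(G^{\star})\le(2^{\aleph_{1}})^{+}$, while $|G^{\star}|=|\widetilde{G^{\star}}|=2^{\aleph_{2}}$. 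Thus the bridge from your (correct) computation of $|\widetilde G|$ to the value of $b(G)$ is missing, and the direction $|G|\le b(G)$ --- which is the substantive one --- remains unproved.

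The paper's argument avoids the completion entirely and works directly with the definition of $b$. Since $\{H\cap U_{\alpha}:\alpha<\tau\}$ is a base of open subgroups at $e$, the very definition of $b$ gives $|\pi_{\alpha}(H)|=[H:H\cap U_{\alpha}]<b(H)$ for every $\alpha$, whence $\sup_{\alpha}|\pi_{\alpha}(H)|\le b(H)$; and conversely every open $U\supseteq H\cap U_{\beta}$ is covered by $|\pi_{\beta}(H)|<|H|$ translates, giving $b(H)\le|H|$. The remaining point is that $|H|=\sup_{\alpha}|\pi_{\alpha}(H)|$, which is exactly where your support/cofinality bookkeeping is relevant --- but it should feed into this equality, not into a completion-cardinality formula. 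Once you know each $|\pi_{\alpha}(H)|<b(H)$, your case split (discrete vs.\ cofinal supports) combined with $|H\cap G[\alpha]|\le|\pi_{\alpha}(H)|$ yields $|H|\le b(H)$ directly, without ever mentioning $\widetilde G$.
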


\begin{proof}
We first prove that $b(H)=|H|$. Indeed, for each $\alpha<\tau$, we conclude that $|\pi_{\alpha}(H)|<b(H)$ since $\pi_{\alpha}(H)$ is topologically isomorphic to $H/(H\cap U_{\alpha}).$  Hence it follows that $|H|=\sup_{\alpha<\tau}|\pi_{\alpha}(H)|\leq b(H)$. Now take any open neighborhood $U$ of $e$ in $H$. Then there exists $\beta<\tau$ such that $H\cap U_{\beta}\subset U$. Choose $D\subset H$ such that $\{d(H\cap U_{\beta}): d\in D\}$ forms a (faithfully indexed) partition of $H$. Then $H=D\cdot (H\cap U_{\beta})\subset D\cdot U\subset H$, whence it follows that $b(H)\leq H$. Therefore, $b(H)=|H|$.

By the above proof, we have $b(H\cap U_{\alpha})=|H\cap U_{\alpha}|$ for each $\alpha<\tau$. Since $|H|=|H\cap U_{\alpha}|\cdot |\pi_{\alpha}(H)|$ and $|\pi_{\alpha}(H)|<|H|$, it concludes that $|H\cap U_{\alpha}|=|H|$. Therefore, $b(H\cap U_{\alpha})=|H|=b(H)$.
\end{proof}

By Lemmas~\ref{l11},~\ref{l33},~\ref{l22}, we can prove the following lemma by the similar methods in \cite[Lemmas~3.10 and~~3.12]{ST2002}, hence we omit the proof.

\begin{lemma}\label{l44}
Let $H$ be a closed subgroup of $G^{\star}(\omega)$, where $\tau>\omega_{1}$. Then there exist a closed unbounded set $B\subset \tau$ and a representation $S=\bigcup_{\alpha<\tau}S_{\alpha}$ such that $S$ is a generating suitable set for $H$ which satisfy the following conditions for any $\alpha, \beta\in B$:
\begin{enumerate}
\item $S_{\alpha}$ is closed and discrete in $H$;

\item $S_{\alpha}\subset S_{\beta}$ if $\alpha<\beta$;

\item $S_{\alpha}=\bigcup_{\beta\in B\cap \alpha}S_{\beta}$ if $\beta$ is limit in $B$;

\item $\langle S_{\alpha}\rangle=\pi_{\alpha}(\langle S_{\alpha}\rangle)=\pi_{\alpha}(H)$;

\item if $\alpha<\beta$ and $x\in S_{\beta}\setminus S_{\alpha}$, then $\mbox{supp}(x)\cap \alpha=\emptyset$.
\end{enumerate}
\end{lemma}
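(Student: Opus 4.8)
The plan is to reproduce, in the $G^{\star}(\omega)$ setting, the transfinite constructions of \cite[Lemmas 3.10 and 3.12]{ST2002}, feeding in Lemmas~\ref{l11},~\ref{l33} and~\ref{l22} of the present paper in place of the analogous tools used there. First I would split into two cases. If $|\pi_{\alpha}(H)|=|H|$ for some $\alpha<\tau$, then $|H|<b(H)$ (already an open subgroup $H\cap U_{\beta}$ with $\beta\geq\alpha$ requires at least $|\pi_{\beta}(H)|\geq|H|$ translates to cover $H$), so \cite[Lemma 3.5]{ST2002} supplies a closed generating suitable set $S$ for $H$; a routine closing-off argument then gives a club $B\subseteq\tau$ along which $S_{\alpha}:=\{s\in S:\mbox{supp}(s)\subseteq\alpha\}$ satisfies (1)--(5). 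So I may assume $|\pi_{\alpha}(H)|<|H|$ for every $\alpha<\tau$; by Lemma~\ref{l22} this gives $b(H)=|H|$, and, arguing as in \cite{ST2002}, I reduce further to the situation $|\pi_{\alpha}(H)|<\tau$ for all $\alpha$, so that $|H\cap G[\alpha]|<\tau$ and $H$ carries a generating set $D=\{d_{\alpha}:\alpha<\tau\}$.

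With $D$ fixed I would run the recursion essentially as in \cite[Lemma 3.12]{ST2002}. Apply Lemma~\ref{l33} to obtain a club $B_{0}\subseteq\tau$ on which $\mbox{supp}(d_{\gamma})\subseteq\alpha$ for $\gamma<\alpha$ and $\pi_{\alpha}(H)=\langle D_{\alpha}\rangle$. For each $\alpha$ and each $x\in\pi_{\alpha}(H)$ take the closed discrete set $F_{\alpha,x}\subseteq H\cap\pi_{\alpha}^{-1}(x)$ of cardinality $<\tau$ furnished by Lemma~\ref{l11}(ii), put $E_{\alpha}:=\bigcup\{F_{\alpha,x}:x\in\pi_{\alpha}(H)\}$, and note that $E_{\alpha}$ is closed and discrete in $H$, that $|E_{\alpha}|<\tau$ (using that $\tau$ is regular and $|\pi_{\alpha}(H)|<\tau$), and that $E_{\alpha}$ meets every coset $xU_{\beta}$ met by $H$; in particular $\mbox{supp}(E_{\alpha})$ is bounded in $\tau$. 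Next, passing to a further club $B\subseteq B_{0}$ closed under the operation that sends $\alpha$ to an ordinal past $\alpha$ capturing $\mbox{supp}(E_{\alpha})$ and the supports needed to express the $d_{\gamma}$ ($\gamma\leq\alpha$) and the elements of $E_{\alpha}$ through elements of small support, I would define, for $\alpha\in B$, the set $S_{\alpha}$ as the appropriate ``$\alpha$-truncation'' assembled from the $E_{\gamma}$ with $\gamma\in B$, $\gamma\leq\alpha$. Along $B$ this produces a continuous increasing chain of closed discrete subsets of $H$ with $\langle S_{\alpha}\rangle=\pi_{\alpha}(H)$ and with the elements of $S_{\beta}\setminus S_{\alpha}$ having support disjoint from $\alpha$, so that $S:=\bigcup_{\alpha\in B}S_{\alpha}$ works: (2), (3) and (5) are built in, (1) and (4) are stagewise, $\langle S\rangle=\bigcup_{\alpha\in B}\pi_{\alpha}(H)=H$ since $B$ is unbounded and $h\in\pi_{\alpha}(H)$ whenever $\mbox{supp}(h)\subseteq\alpha$, $S$ is discrete because by (5) only one stage contributes near any given point, and $S\cup\{e\}$ is closed because an accumulation point $x\neq e$ of $S$ would already be an accumulation point of some $S_{\alpha}$ inside the closed subgroup $H\cap G[\alpha]$, contradicting (1).

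The hard part will be keeping (4) and (5) compatible through limit stages of $B$ and, relatedly, checking that $S$ stays discrete and $S\cup\{e\}$ stays closed: each $S_{\alpha}$ must be rich enough to generate all of $\pi_{\alpha}(H)$ --- which, via Lemma~\ref{l11}, forces it to witness every neighborhood met by $H$ --- yet support-stratified and small enough (cardinality $<\tau$) that the union does not acquire spurious limit points. Closedness is the real crux, and it is precisely where Lemma~\ref{l11}(ii) is used: because each witness set has size $<\tau$, every stage is genuinely small, and together with (5) any net converging to a point of $H$ is eventually trapped inside a single $G[\alpha]$, where $H\cap G[\alpha]$ is closed and $S_{\alpha}$ is closed discrete. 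Since \cite[Lemmas 3.10 and 3.12]{ST2002} carry all of this out for $G^{\star}$ using the full-support analogues of Lemmas~\ref{l11},~\ref{l33} and~\ref{l22}, I would transcribe those arguments with the obvious changes.
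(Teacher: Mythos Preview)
Your proposal is correct and follows exactly the approach the paper takes: the paper's own proof is omitted, with the remark that the lemma is proved ``by the similar methods in \cite[Lemmas~3.10 and~3.12]{ST2002}'' using Lemmas~\ref{l11},~\ref{l33},~\ref{l22}, and your sketch amounts precisely to transcribing those arguments with the obvious changes for $G^{\star}(\omega)$.
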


By Lemma~\ref{l44}, we can prove the following lemma by the similar method in \cite[Theorem 3.16]{ST2002}, hence we omit the proof.

\begin{lemma}\label{l55}
Each non-closed subgroup of $G^{\star}(\omega)$ has a closed generated suitable set, where $\tau>\omega_{1}$.
\end{lemma}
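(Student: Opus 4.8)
The plan is to run the argument that establishes Lemma~\ref{l44} in the closed case, but with the building blocks chosen to lie inside the (non-closed) subgroup $H$ itself rather than inside its closure; the device that makes this possible is Lemma~\ref{l11}(i), which for a point $x\in\overline{H}\setminus H$ still produces a closed discrete subset $F_{\alpha,x}$ of $H$ having $x$ as its unique accumulation point. Two preliminary remarks: first, since $G[\alpha]=\sum_{\beta<\alpha}G_{\beta}$ meets the basic neighborhood $U_{\alpha}$ only at $e$, it is discrete and hence closed in $G^{\star}(\omega)$, so the retraction $\pi_{\alpha}$ has discrete image and $\pi_{\alpha}(\overline{H})\subseteq\overline{\pi_{\alpha}(H)}=\pi_{\alpha}(H)$, whence $\pi_{\alpha}(\overline{H})=\pi_{\alpha}(H)$ for every $\alpha<\tau$; secondly, we may assume $|\pi_{\alpha}(H)|<|H|$ for all $\alpha<\tau$, since otherwise $H\in\mathscr{S}_{cg}$ by the theorem stated immediately after Problem~\ref{pr7}, and a routine argument using the regularity of $\tau$ and the countability of supports then gives $|H|\geq\tau$. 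The construction itself builds a set $S=\bigcup_{\alpha\in B}S_{\alpha}$ along a closed unbounded set $B\subseteq\tau$, just as in the proof of Lemma~\ref{l44} (whose combinatorial heart is Lemma~\ref{l33}): at a stage $\alpha\in B$ one assembles $S_{\alpha}$ out of the sets $F_{\alpha,x}\subseteq H\cap\pi_{\alpha}^{-1}(x)$ of Lemma~\ref{l11}(i), taken over $x\in\pi_{\alpha}(H)$, while securing along the way the coherence expressed by conditions (1)--(5) of Lemma~\ref{l44}, so that the $S_{\alpha}$ increase continuously, each $S_{\alpha}$ is closed and discrete in $G^{\star}(\omega)$, and every point of $S_{\beta}\setminus S_{\alpha}$ has support disjoint from $\alpha$.

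With $S$ so constructed, I would verify the three requirements. For generation, $\langle S\rangle=H$: for $\alpha\in B$ the sets $F_{\alpha,x}$, $x\in\pi_{\alpha}(H)$, generate $\pi_{\alpha}(H)$ — this is the analogue of Lemma~\ref{l33}(b) — so $\langle S_{\alpha}\rangle\supseteq\pi_{\alpha}(H)$; given $h\in H$, since $\mbox{supp}(h)$ is countable, $\tau>\omega_{1}$ is regular and $B$ is club in $\tau$, there is $\alpha\in B$ with $\mbox{supp}(h)\subseteq\alpha$, and then $h=\pi_{\alpha}(h)\in\pi_{\alpha}(H)\subseteq\langle S_{\alpha}\rangle\subseteq\langle S\rangle$. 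For discreteness of $S$: separate its points by the basic neighborhoods $sU_{\alpha}$, using condition (5) to keep the contributions of distinct stages apart. For closedness of $S\cup\{e\}$ in $H$: each $S_{\alpha}$ is closed and discrete in $G^{\star}(\omega)$, the family $\{S_{\alpha}:\alpha\in B\}$ is locally finite away from $e$ by condition (5) and the linearity of the topology, and the only accumulation points the blocks $F_{\alpha,x}$ contribute are the points $x\in\overline{H}\setminus H$, which lie outside $H$; hence $S\cup\{e\}$ is closed in the subspace $H$.

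The step I expect to be the main obstacle is the cardinal-and-support bookkeeping inside the recursion, which is exactly where non-closedness makes a difference. In the closed case Lemma~\ref{l11}(ii) bounds $|F_{\alpha,x}|$ by a cardinal strictly below $\tau$, whereas here Lemma~\ref{l11}(i) only yields $|F_{\alpha,x}|\leq\tau$; one must therefore organize $B$ and the order in which the cosets of the open subgroups $H\cap U_{\alpha}$ are processed so that no single stage adds $\tau$-many points while the projections $\pi_{\alpha}(H)$ are nevertheless exhausted cofinally, and one must check that the analogues of Lemmas~\ref{l33} and~\ref{l22} survive this weakening. All of this is carried out in \cite[Theorem~3.16]{ST2002}, and the present proof is a transcription of that argument; once the bookkeeping is in place, the verification above is routine.
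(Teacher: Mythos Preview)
Your proposal is correct and follows essentially the same approach as the paper: the paper omits the proof entirely, saying only that it follows ``by the similar method in \cite[Theorem 3.16]{ST2002}'' together with Lemma~\ref{l44}, and your sketch is precisely an outline of how that transcription goes, with the key modification being the use of Lemma~\ref{l11}(i) in place of (ii) and the attendant cardinal bookkeeping you flag at the end.
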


Now we can prove the second main theorem in this section.

\begin{theorem}
Every subgroup of $G^{\star}(\omega)$ has a suitable set, where $\tau=\omega$ or $\tau>\omega_{1}$.
\end{theorem}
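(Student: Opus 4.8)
The plan is to split the argument into the two cases for which we have already assembled the machinery. For the case $\tau = \omega$, the space $G^{\star}(\omega)$ coincides with $G^{\star}$ itself (since every element of a countable product automatically has countable support), so the direct product carries the linear topology with the chain of open subgroups $U_n = \prod_{n \le k < \omega} G_k$. I would first recall that the $\sigma$-product group $(G, \mathscr{F}_G)$ from Theorem~\ref{t4} is, in this countable setting, essentially this same group, and more directly I would invoke Proposition~\ref{pr9}: $G^{\star}(\omega)$ has a generating suitable set, hence in particular lies in $\mathscr{S}$. Then, to pass to an arbitrary subgroup $H$, I would use Proposition~\ref{pr8}, which tells us $G^{\star}(\omega)$ is topologically orderable and hereditarily paracompact, and hence hereditarily collectionwise Hausdorff; combining Theorem~\ref{tt56} with Proposition~\ref{pp3} (or with the general fact that subgroups of linearly orderable topological groups are transfinite sequentially dense in their closures) reduces the problem for $H$ to the problem for its closure $\overline{H}$. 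So I would argue: $\overline{H}$ is a closed subgroup of the linearly orderable group $G^{\star}(\omega)$, hence itself linearly orderable, hence metrizable (it is second countable, since $\tau = \omega$), hence has a suitable set by \cite[Theorem 6.6]{CM}; then $H$ is dense in $\overline{H}$ and transfinite sequentially dense there, so $H$ has a suitable set by Theorem~\ref{tt56}.

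For the case $\tau > \omega_1$, I would split according to whether the subgroup $H$ is closed or not. If $H$ is non-closed, then Lemma~\ref{l55} directly gives that $H$ has a closed generating suitable set, so we are done. If $H$ is closed, I would split further according to the behaviour of the projections $\pi_\alpha(H)$. If $|\pi_\alpha(H)| = |H|$ for some $\alpha < \tau$, then the preceding theorem (the partial answer to Problems~\ref{pr1} and~\ref{pr7}) applies — more precisely its $G^{\star}(\omega)$-analogue obtained by the same counting argument via Lemma~\ref{l22} and \cite[Lemma 3.5]{ST2002} — to yield a closed generating suitable set. If instead $|\pi_\alpha(H)| < |H|$ (equivalently, by Lemma~\ref{l22}, $|\pi_\alpha(H)| < \tau$ when $|H| = \tau$, and the small-$H$ cases where $|H| < \tau$ fall under metrizability) for all $\alpha < \tau$, then Lemma~\ref{l44} produces a closed unbounded $B \subset \tau$ and a generating suitable set $S = \bigcup_{\alpha < \tau} S_\alpha$ with the listed coherence properties; this is precisely the conclusion we want. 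Finally I should dispose of the edge case $|H| < \tau$ (and $|H| \le \omega_1$, say): here $H\cap U_\alpha$ eventually stabilizes or $H$ is contained in some $G[\alpha]$-like discrete piece, and in any case $H$ is metrizable (it has a countable chain of open subgroups as a neighbourhood base at $e$, being bounded in the linear topology), so \cite[Theorem 6.6]{CM} applies.

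I would organize the write-up as: ``Let $H$ be a subgroup of $G^{\star}(\omega)$. \textbf{Case 1: $\tau = \omega$.} \dots \textbf{Case 2: $\tau > \omega_1$.} If $H$ is non-closed, apply Lemma~\ref{l55}. If $H$ is closed and $|H| < \tau$, then $H$ is metrizable and we apply \cite[Theorem 6.6]{CM}. If $H$ is closed with $|H| = \tau$ and $|\pi_\alpha(H)| = \tau$ for some $\alpha$, apply the counting argument of Lemma~\ref{l22} together with \cite[Lemma 3.5]{ST2002}. Otherwise $H$ is closed with $|\pi_\alpha(H)| < \tau$ for all $\alpha$, and Lemma~\ref{l44} gives the suitable set directly.'' Each case is then a one-line citation of the relevant statement above.

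The main obstacle I anticipate is the bookkeeping in Case 2 — making sure the trichotomy on closed subgroups $H$ (large projection / all-small projections / small $H$) is genuinely exhaustive, and that the hypotheses of Lemmas~\ref{l33}, \ref{l44} (which require $|\pi_\alpha(H)| < \tau$ for all $\alpha$) line up exactly with the complementary case to ``large projection.'' There is a subtlety when $\omega < |H| \le \omega_1 < \tau$: one must check $H$ is metrizable there, which follows because $H$ cannot require uncountably many cosets of any $H \cap U_\alpha$ (else $|H| > \omega_1$ would be forced through $\sup_\alpha |\pi_\alpha(H)| \le b(H) \le |H|$ as in Lemma~\ref{l22}), so $H$ has a countable base at $e$. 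Everything else is a matter of quoting the lemmas already proved; no new ideas are needed, which is exactly why the lemmas were set up as they were.
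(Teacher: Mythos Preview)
Your proposal is correct and follows essentially the same route as the paper, citing the same lemmas (\ref{l44}, \ref{l55}, and the counting argument via \cite[Lemma 3.5]{ST2002}) for the case $\tau>\omega_1$. Two minor points: for $\tau=\omega$ the paper simply observes that $G^{\star}(\omega)$ itself is metrizable (the chain $\{U_n:n<\omega\}$ is a countable neighbourhood base at $e$), so every subgroup is metrizable and \cite[Theorem~6.6]{CM} applies directly---your detour through Theorem~\ref{tt56} and Propositions~\ref{pr8}, \ref{pp3} is unnecessary; and in your edge case $|H|<\tau$ with $\tau>\omega_1$ regular, the union of the (countable) supports of elements of $H$ has size $|H|\cdot\omega<\tau$, so $H\subset G[\alpha]$ for some $\alpha<\tau$ and is therefore \emph{discrete}, which is cleaner than the ``countable chain'' heuristic you sketch.
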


\begin{proof}
Let $H$ be a subgroup of $G^{\star}(\omega)$. If $\tau=\omega$, then $G^{\star}(\omega)$ is metrizable, hence $H$ is metriable, then $H$ has a suitable set. Assume$\tau>\omega_{1}$.  By \cite[Remark 3.1]{ST2002}, we may assume that $\tau$ is a regular uncountable cardinal. By Proposition~\ref{pr9}, it suffices to consider the case when $\pi_{\alpha}(H)<|H|$ for each $\alpha<\tau$, which implies that $|H|\geq\tau$. If $H$ is closed in $G$, then the conclusion follows from Lemma~\ref{l44}. Now assume that $H$ is non-closed, then it follows from Lemma~\ref{l55} that $H$ has a closed generated suitable set.
\end{proof}

The following question is still unknown for us.

\begin{question}
If $\tau=\omega_{1}$, does every subgroup of $G^{\star}(\omega)$ have a suitable set?
\end{question}

\bigskip
\section{The existence of suitable sets in topological groups with an $\omega^\omega$ }
In this section, we consider the following Question~\ref{q444} and give some partial answers to this question. We prove that, for any topological group $G$ with an $\omega^{\omega}$-base, if $G$ is a $k$-space then it has a suitable set. Moreover, we show that every linearly orderable topological group with an $\omega^{\omega}$-base is metrizable; thus it
has a suitable set.

\begin{question}\label{q444}
Let $G$ be a topological group with an $\omega^{\omega}$-base. Does $G$ have a suitable set?
\end{question}

Let $\omega^\omega$ be the set all the functions from $\omega$ to $\omega$ with the natural partial order defined
by $\alpha\leq\beta$ iff $\alpha(n)\leq \beta(n)$ for all $\alpha, \beta\in \omega^\omega$. A topological space $(X, \tau)$ is said to be have an {\it $\omega^\omega$-base} if for each point $x\in X$ there exists a neighborhood base $\{U_{\alpha}[x]: \alpha\in\omega^\omega\}$ such that $U_{\beta}[x]\subset U_{\alpha}[x]$ for all $\alpha\leq\beta$ in $\omega^\omega$. Clearly, each first-countable space with an $\omega^\omega$-base.

Clearly, there exists a topological group does not has an $\omega^{\omega}$-base. Indeed, the $\sum$-product $G$ of uncountably many copies of $\mathbb{Z}(2)$ is a Fr\'{e}chet-Urysohn topological group that is not metrizable, hence $G$ does not has an $\omega^{\omega}$-base by \cite[Theorem 2.12]{GKL2015}. The following theorem gives a generalization of \cite[Theorem 6.6]{CM}.

\begin{theorem}
Let $G$ be a topological group with an $\omega^{\omega}$-base. If $G$ is a $k$-space, then $G$ has a suitable set.
\end{theorem}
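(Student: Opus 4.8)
The plan is to use the structure theory of topological groups with an $\omega^{\omega}$-base together with the $k$-space hypothesis to reduce to a situation where the known result \cite[Theorem 6.6]{CM} (metrizable groups have suitable sets) applies, or where a suitable set can be built directly. The key external fact about $\omega^{\omega}$-bases is that a topological group $G$ with an $\omega^{\omega}$-base is \emph{strictly angelic} and, more to the point, that it has countable $cs^{*}$-character and is an $\aleph$-space-like object; in particular, the compact subsets of $G$ are metrizable and $G$ is dominated by a directed family indexed by $\omega^{\omega}$ of such metrizable compacta. Since $G$ is assumed to be a $k$-space, the topology of $G$ is determined by this family of metrizable compact subsets.

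First I would record that a $k$-space topological group with an $\omega^{\omega}$-base is either metrizable or contains a nice cofinal structure of compact metrizable sets $\{K_{\alpha}:\alpha\in\omega^{\omega}\}$ whose union is $G$ and which determines the topology; in the metrizable case \cite[Theorem 6.6]{CM} already gives a suitable set, so assume we are in the non-metrizable case. Next I would fix a countable dense subset $D_{\alpha}$ of each $K_{\alpha}$ (each $K_{\alpha}$ is compact metrizable, hence separable) and attempt to assemble a suitable set from these: the natural candidate is to choose, inside each "new layer" $K_{\alpha}\setminus\bigcup_{\beta<\alpha}K_{\beta}$, a discrete set whose closure of the generated subgroup picks up that layer, and take the union over a cofinal chain in $\omega^{\omega}$. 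The point of indexing by $\omega^{\omega}$ (as opposed to an arbitrary directed set) is that $\omega^{\omega}$ has a cofinal chain of order type $\mathfrak{b}$ which is $\omega_1$-like in the relevant sense, so one can run a transfinite recursion of length $\mathfrak{b}$ and control discreteness and closedness using the $k$-space property: a set $S$ meeting each $K_{\alpha}$ in a relatively discrete set with at most one accumulation point (namely $e$) is automatically closed and discrete off $e$ by the $k$-space hypothesis, provided the $K_{\alpha}$ are cofinal.

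The hard part will be the simultaneous bookkeeping: ensuring that the union $S=\bigcup_{\alpha} S_{\alpha}$ is discrete (no point of $S$ is an accumulation point of the rest) while also ensuring $\langle S\rangle$ is dense, and that $S\cup\{e\}$ is closed. Discreteness and closedness will come from arranging that $S\cap K_{\alpha}$ is finite-to-locally-finite and that each new $S_{\alpha}$ is chosen to avoid a prescribed open neighborhood of the previously built part, which is possible because $K_{\alpha}$ is compact metrizable and we only ever add countably many points at each stage; the $k$-space property then upgrades "closed on each $K_{\alpha}$" to "closed in $G$". Density of $\langle S\rangle$ is easier: since $\bigcup_{\alpha}D_{\alpha}$ is dense in $G$ (the $K_{\alpha}$ cover $G$ and each $D_{\alpha}$ is dense in $K_{\alpha}$), it suffices to arrange that each chosen $D_{\alpha}$ lies in $\langle S\rangle$, which one guarantees by making $S_{\alpha}\supseteq D_{\alpha}$ modulo the elements already captured — here one uses that at a limit stage of the recursion the accumulated part is still closed and discrete off $e$, so there is "room" to add $D_{\alpha}$ pushed slightly to be discrete. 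I expect the delicate point to be verifying at limit stages $\gamma$ of the length-$\mathfrak{b}$ recursion that $\overline{\bigcup_{\beta<\gamma}S_{\beta}}\setminus\bigcup_{\beta<\gamma}S_{\beta}\subseteq\{e\}$; this is where the precise interaction between the $\omega^{\omega}$-base (giving a well-behaved cofinal family) and the $k$-space property (giving closedness from the compact pieces) must be exploited, likely via the observation that any convergent sequence in $G$ lies in a single $K_{\alpha}$ and hence can only converge to a point of that $K_{\alpha}$, controlled by the metric there.
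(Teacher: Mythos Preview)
Your proposal has a genuine structural gap and misses a much shorter route.

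The main gap is the treatment of cofinality in $\omega^{\omega}$. You assert that $\omega^{\omega}$ has a cofinal chain of order type $\mathfrak{b}$ and plan to run the recursion along it. But a well-ordered chain of length $\mathfrak{b}$ in $(\omega^{\omega},\leq^{*})$ is only \emph{unbounded}, not \emph{dominating}; the least length of a cofinal (dominating) chain, if one exists at all, is $\mathfrak{d}$, and in general there need not be any cofinal chain. Consequently the sets $K_{\alpha}$ chosen along your chain need not cover $G$ and need not determine its topology, so the $k$-space argument you want (``closed on each $K_{\alpha}$ implies closed in $G$'') breaks down, and the density argument (``$\bigcup_{\alpha}D_{\alpha}$ is dense in $G$'') fails as well. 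Even if you switch to a dominating family of size $\mathfrak{d}$, you lose the linear order, so the ``layer'' sets $K_{\alpha}\setminus\bigcup_{\beta<\alpha}K_{\beta}$ are not meaningful and the limit-stage control you describe evaporates. The remainder of the plan (choosing discrete $S_{\alpha}$, pushing $D_{\alpha}$ into $\langle S\rangle$, verifying closedness at limits) is left at the level of intentions rather than an actual construction, and the acknowledged ``delicate point'' at limit stages is exactly where the argument would have to do real work.

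By contrast, the paper avoids all of this by invoking a sharp structure theorem: a topological group which is a $k$-space and has an $\omega^{\omega}$-base is either metrizable or contains an open submetrizable $k_{\omega}$-subgroup $H$ (this is \cite[Corollary 3.13]{GKL2015}). The metrizable case is \cite[Theorem 6.6]{CM}. In the other case, $H$ is a submetrizable $k_{\omega}$-space, hence has a countable network, so $H$ has a suitable set $S$ by \cite[Corollary 3.10]{DTT}; one then chooses one point $d$ from each nontrivial coset of $H$ to form a closed discrete set $D\subset G\setminus H$, and $S\cup D$ is a suitable set for $G$. The whole proof is a few lines. The lesson is that the right external input here is not the $\omega^{\omega}$-indexed compact cover but the open $k_{\omega}$-subgroup dichotomy, which immediately reduces to already-solved cases.
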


\begin{proof}
Since $G$ is a $k$-space with an $\omega^{\omega}$-base, it follows from \cite[Corollary 3.13]{GKL2015} that $G$ is metrizable or contains a submetrizable open $k_{\omega}$-subgroup. If $G$ is metrizable, then $G$ has a suitable set by \cite[Theorem 6.6]{CM}. Now assume that $G$ contains a submetrizable open $k_{\omega}$-subgroup $H$. Then $H$ has a countable network by \cite{G1984}, hence $H$ has a suitable set $S$ by \cite[Corollary 3.10]{DTT}. Choose $D\subset G\setminus H$ such that $\{dH: d\in D\}$ forms a (faithfully indexed) partition of $G\setminus H$. Then it is easily checked that $S\cup D$ is a suitable set for $G$.
\end{proof}

From well-known Birkhoff and Kakutani's Theorem, it follows that each first-countable topological group is metrizable, hence we have the following corollary.

\begin{corollary}\cite[Theorem 6.6]{CM}
Each metrizable topological group has a suitable set.
\end{corollary}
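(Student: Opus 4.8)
The plan is to obtain this as an immediate special case of the preceding theorem, so all that needs to be done is to verify that an arbitrary metrizable topological group $G$ satisfies the two hypotheses there: that $G$ is a $k$-space and that $G$ carries an $\omega^{\omega}$-base.

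For the $\omega^{\omega}$-base, I would first note that $G$, being metrizable, is first-countable, and then invoke the observation recorded above that every first-countable space has an $\omega^{\omega}$-base — concretely, if $\{V_{n}[x] : n \in \omega\}$ is a decreasing countable base at a point $x$, then setting $U_{\alpha}[x] = V_{\alpha(0)}[x]$ for $\alpha \in \omega^{\omega}$ yields a neighbourhood base with $U_{\beta}[x] \subseteq U_{\alpha}[x]$ whenever $\alpha \le \beta$. (The Birkhoff--Kakutani theorem enters only if one prefers to phrase this hypothesis as first-countability; for a group that is already metrizable, first-countability is automatic.) For the $k$-space property, I would recall that every metrizable space is sequential and hence a $k$-space.

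Once both hypotheses are in place, the theorem immediately produces a suitable set for $G$, which finishes the argument. I do not expect any genuine obstacle here: the entire substance of the corollary is carried by the theorem, and the proof reduces to the routine remark that metrizability implies the two structural properties it uses. If one wished, one could instead cite \cite[Theorem 6.6]{CM} directly; presenting the statement as a corollary is meant only to display it as an instance of the $\omega^{\omega}$-base result.
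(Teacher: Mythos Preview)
Your proposal is correct and takes essentially the same approach as the paper: both derive the corollary by observing that a metrizable topological group is first-countable (hence has an $\omega^{\omega}$-base, as the paper explicitly records) and is a $k$-space, so the preceding theorem applies. The paper's one-line justification invokes Birkhoff--Kakutani to emphasize that first-countability and metrizability coincide for topological groups, which you correctly note is not strictly needed here; your parenthetical remark about circularity (the theorem itself cites \cite[Theorem 6.6]{CM}) is apt and matches the spirit in which the paper presents the corollary.
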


The following example shows that our generalization is significant.

\begin{example}
There exists a topological group $G$ which is a $k$-space with an $\omega^{\omega}$-base, but $G$ is not metrizable, hence it is not first-countable.
\end{example}

\begin{proof}
Let $X$ be a non-discrete submetrizable $k_{\omega}$-space. Then $F(X)$ and $A(X)$ are $k$-spaces with $\omega^{\omega}$-bases by \cite[Theorem 4.6]{LRZ2020} and \cite[Theorem 2.2]{LL2025} respectively. However, $F(X)$ and $A(X)$ are not metrizable since $X$ is non-discrete by \cite[Theorem 7.1.20]{A2008}.
\end{proof}

By \cite{TKA1997}, neither separability
nor $\sigma$-compactness imply
the existence of a suitable set in topological groups. However, for any topological group $G$ with an $\omega^{\omega}$-base, any one of the conditions of  separability, $\sigma$-compactness implies that $G$ has a suitable set.

Recall that a topological group $G$ is {\it precompact}
if for each neighborhood $U$ of the identity of $G$,
there exists a finite subset $F$ of $G$ such that $FU=G$.

\begin{theorem}
Let $G$ be a separable topological group with an $\omega^{\omega}$-base. Then $G$ has a suitable set. Further, if $G$ is not precompact, it has a closed suitable set.
\end{theorem}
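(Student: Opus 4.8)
The plan is to reduce both assertions to the structural dichotomy for $k$-spaces with an $\omega^{\omega}$-base already used in the previous theorem, together with the classical results of Comfort--Morris and Dikranjan--Tkachenko--Tkachuk. First I would observe that a separable topological group with an $\omega^{\omega}$-base, being of countable cellularity hence of countable tightness (by the general theory of $\omega^{\omega}$-bases, e.g. \cite{GKL2015}), cannot contain an uncountable discrete subspace; consequently it is automatically a $k$-space is \emph{not} what one gets for free, so instead I would proceed by cases directly. By \cite[Corollary 3.13]{GKL2015} (or the relevant metrizability criterion) a separable group with an $\omega^{\omega}$-base is either metrizable or contains a submetrizable open $k_{\omega}$-subgroup; but an open subgroup of a separable group has countably many cosets, so in the second case $G$ is a countable union of translates of a submetrizable $k_{\omega}$-space, hence cosmic, hence has a countable network.

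In the metrizable case, $G$ has a suitable set by \cite[Theorem 6.6]{CM}. In the cosmic case, $G$ has a countable network, and then \cite[Corollary 3.10]{DTT} applies to give a suitable set. This disposes of the first assertion. For the second assertion, assume in addition that $G$ is not precompact. In the metrizable case, a metrizable non-precompact group has a closed suitable set by \cite[Corollary 3.9]{DTT} (the Dikranjan--Tkachenko--Tkachuk criterion that non-precompactness together with suitable covering hypotheses yields a \emph{closed} suitable set); a separable metrizable space is in particular Lindel\"of and hereditarily separable, so the hypotheses of that corollary are met. In the cosmic case, $G$ is Lindel\"of and separable; if it were also compact it would be precompact, contradicting our assumption, so $G$ is Lindel\"of, separable and non-compact, and part (b) of Lemma~\ref{t0} yields $G\in\mathscr{S}_{c}$, i.e.\ a closed suitable set.

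The main obstacle I anticipate is making the reduction to the cosmic case fully rigorous: one must check that ``separable $+$ open submetrizable $k_{\omega}$-subgroup'' really forces countably many cosets (this is immediate, since the cosets form a discrete family and separability bounds the size of a discrete family of nonempty open sets) and then that a countable union of cosets of a cosmic open subgroup is cosmic (a union of countably many cosmic subspaces is cosmic, being a countable union of sets each with a countable network, whose union is a countable network for the whole). A secondary point requiring care is citing the exact form of \cite[Corollary 3.9]{DTT} or \cite[Corollary 3.10]{DTT}: the former is typically stated for non-pseudocompact (equivalently, in the Lindel\"of regular setting, non-compact) groups with a suitable covering property, and one should confirm that a separable metrizable group, or more generally a cosmic non-precompact group, satisfies those hypotheses; in our situation non-precompactness of a Lindel\"of group gives non-compactness, hence non-pseudocompactness, and cosmic spaces are hereditarily Lindel\"of, so all covering conditions hold. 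Once these bookkeeping points are settled, the argument is a short case split with no genuine computation.
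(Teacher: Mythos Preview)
Your argument has a genuine gap at the very first reduction. You correctly note that separability does \emph{not} give the $k$-space property for free, and then immediately invoke \cite[Corollary~3.13]{GKL2015} as if it did: that corollary is the dichotomy for \emph{$k$-space} groups with an $\omega^{\omega}$-base, and nothing in your proposal establishes that a separable group with an $\omega^{\omega}$-base is a $k$-space (it need not be). Without that, you have no way to reach the ``metrizable or open submetrizable $k_{\omega}$-subgroup'' alternative, and the cosmic case never gets off the ground. The subsequent bookkeeping about countably many cosets and countable networks is fine, but it is built on an unjustified structural step.

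The paper avoids this entirely by splitting on precompactness rather than on the $k$-space dichotomy. If $G$ is precompact, then \cite[Corollary~3.11]{GKL2015} (precompact $+$ $\omega^{\omega}$-base $\Rightarrow$ metrizable) applies and \cite[Theorem~6.6]{CM} gives a suitable set. If $G$ is not precompact, then one does not need the $\omega^{\omega}$-base at all: the classical \cite[Corollary~5.8]{CM} says that every separable non-precompact topological group already has a \emph{closed} suitable set. This handles both assertions in two lines, with no appeal to $k$-spaces, $k_{\omega}$-subgroups, cosmic spaces, or Lemma~\ref{t0}. If you want to repair your route, you would need an independent argument (or a different citation) yielding the open-$k_{\omega}$-subgroup alternative from separability alone; absent that, the precompactness split is both correct and strictly shorter.
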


\begin{proof}
If $G$ is precompact, then $G$ is metrizable by \cite[Corollary 3.11]{GKL2015}, hence $G$ has suitable set by \cite[Theorem 6.6]{CM}. Now assume that $G$ is not precompact, then $G$ has a closed suitable set by
\cite[Corollary 5.8]{CM}.
\end{proof}

\begin{theorem}
Let $G$ be a $\sigma$-compact topological group with an $\omega^{\omega}$-base. Then $G$ has a suitable set.
\end{theorem}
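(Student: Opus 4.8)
The plan is to reduce the statement to the separable case established in the preceding theorem, by proving that a $\sigma$-compact topological group with an $\omega^{\omega}$-base is automatically separable. Write $G=\bigcup_{n\in\mathbb{N}}K_{n}$ with each $K_{n}$ compact. The first observation I would record is that an $\omega^{\omega}$-base is inherited by subspaces: if $\{U_{\alpha}[x]:\alpha\in\omega^{\omega}\}$ is a neighbourhood base at a point $x$ with $U_{\beta}[x]\subseteq U_{\alpha}[x]$ whenever $\alpha\leq\beta$, then for any $Y\subseteq G$ with $x\in Y$ the traces $\{U_{\alpha}[x]\cap Y:\alpha\in\omega^{\omega}\}$ form a neighbourhood base at $x$ in $Y$ with the same monotonicity. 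Hence each $K_{n}$, with the subspace topology, is a compact space carrying an $\omega^{\omega}$-base.

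The key input is that a compact space with an $\omega^{\omega}$-base is metrizable (equivalently, every compact subset of a topological group with an $\omega^{\omega}$-base is metrizable); this is known, see \cite{GKL2015}. Consequently each $K_{n}$ is compact metrizable, hence second countable, hence separable; fix a countable dense $D_{n}\subseteq K_{n}$. Then $D=\bigcup_{n\in\mathbb{N}}D_{n}$ is countable, and it is dense in $G$: given a nonempty open $U\subseteq G$, choose $x\in U$ and $n$ with $x\in K_{n}$; then $U\cap K_{n}$ is nonempty and relatively open in $K_{n}$, so it meets $D_{n}$, whence $U$ meets $D$. Thus $G$ is separable. Since $G$ also has an $\omega^{\omega}$-base, the preceding theorem applies and produces a suitable set for $G$ (and a closed suitable set when $G$ is not precompact), which proves the theorem.

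The only non-routine ingredient here is the metrizability of compact spaces carrying an $\omega^{\omega}$-base; everything else is an inheritance-plus-gluing argument. If one wishes to avoid invoking that result directly, an alternative is to use that a compact space with an $\omega^{\omega}$-base has a countable network together with the elementary fact that a countable union of subspaces with countable networks again has a countable network (hence is separable); but this rests on the same circle of ideas. Accordingly, the main obstacle I anticipate is not a new argument but pinning down, in a form already available, the fact that compact subsets of groups with an $\omega^{\omega}$-base are metrizable (or cosmic); once that is in hand, the reduction above is immediate.
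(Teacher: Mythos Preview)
Your argument is correct but takes a different route from the paper. The paper does not pass through separability and the preceding theorem at all: it invokes \cite[Theorem~1.3]{GKL2015} together with \cite{G1984} to conclude directly that a $\sigma$-compact topological group with an $\omega^{\omega}$-base has a countable network, and then applies \cite[Corollary~3.10]{DTT} (spaces with a countable network have suitable sets). Your approach instead shows each compact piece $K_{n}$ inherits an $\omega^{\omega}$-base and is therefore metrizable, hence separable, making $G$ separable, and then feeds this into the preceding theorem on separable groups with an $\omega^{\omega}$-base. Both arguments rest on essentially the same structural input from \cite{GKL2015} (metrizability/cosmicity of compact pieces in the presence of an $\omega^{\omega}$-base); the paper's version is a one-line citation of the global cosmic result, while yours is a hands-on reduction via the pieces. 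Your detour buys nothing extra here, but it is self-contained modulo the metrizability fact you flag, and your ``alternative'' via countable networks on the $K_{n}$ is in fact exactly the paper's argument carried out locally rather than globally.
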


\begin{proof}
By \cite[Theorem 1.3]{GKL2015} and \cite{G1984}, $G$ has a countable network, hence $H$ has a suitable set $S$ by \cite[Corollary 3.10]{DTT}.
\end{proof}

The following theorem gives a partial answer to Problem~\ref{pr1}.

\begin{theorem}
Every linearly orderable topological group with an $\omega^{\omega}$-base is metrizable; thus it
has a suitable set.
\end{theorem}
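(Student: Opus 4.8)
The plan is to show that a linearly orderable topological group $G$ with an $\omega^{\omega}$-base must in fact be first-countable at the identity, and then invoke Birkhoff--Kakutani. The key structural fact about linearly orderable (equivalently, by the discussion after Problem~\ref{pr2}, totally-disconnected orderable) topological groups that I would exploit is the one already cited from \cite{NP1}: if $G$ is not metrizable, then $G$ possesses a decreasing chain $\{U_{\alpha}:\alpha\in I\}$ of open subgroups forming a local base at the identity. So I would argue by contradiction: assume $G$ is not metrizable, fix such a well-ordered (or at least linearly ordered, of uncountable cofinality) chain of open subgroups, and derive a contradiction with the existence of an $\omega^{\omega}$-base.

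The heart of the argument is a cofinality/cardinality clash. An $\omega^{\omega}$-base at $e$ is indexed by $\omega^{\omega}$, whose cofinality with respect to the coordinatewise order is $\mathfrak{d}\ge\omega_1$ but which is $\sigma$-directed in the strong sense that every countable subset has an upper bound. First I would note that the chain $\{U_{\alpha}:\alpha\in I\}$ cannot have countable cofinality: if it did, a cofinal $\omega$-sequence of open subgroups would intersect to an open subgroup (the intersection of countably many members of a neighbourhood base of subgroups), forcing that intersection to be $\{e\}$ only if $G$ were metrizable -- more precisely, a decreasing $\omega$-chain of open subgroups that is a local base makes $G$ first-countable, hence metrizable, contradiction. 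So $\operatorname{cf}(I)\ge\omega_1$. On the other hand, given the $\omega^{\omega}$-base $\{V_{\beta}:\beta\in\omega^{\omega}\}$ at $e$, each $U_{\alpha}$ contains some $V_{\beta(\alpha)}$ and each $V_{\beta}$ contains some $U_{\alpha(\beta)}$; this sets up an order-reversing interleaving between the chain $I$ and the poset $\omega^{\omega}$. The crucial point is that the chain, being linearly ordered of uncountable cofinality, cannot be refined by a $\sigma$-directed base in this way: I would pick a strictly increasing $\omega_1$-sequence $\{\alpha_{\xi}:\xi<\omega_1\}$ cofinal-free in $I$, look at the corresponding $V_{\beta_{\xi}}\subseteq U_{\alpha_{\xi}}$, take (using $\sigma$-directedness of $\omega^{\omega}$) an upper bound $\gamma$ of any countably many of the $\beta_{\xi}$'s, and observe that $V_{\gamma}$ is then contained in all but countably many... no: more carefully, $V_{\gamma}\subseteq V_{\beta_{\xi}}\subseteq U_{\alpha_{\xi}}$ for the chosen countably many $\xi$, which pins $V_{\gamma}$ below a cofinal-in-$\omega_1$ collection of the $U_{\alpha_{\xi}}$, forcing $V_{\gamma}\subseteq\bigcap_{\xi<\omega_1}U_{\alpha_{\xi}}$ after taking suprema appropriately -- and this intersection is a proper open subgroup strictly below the tail of the base, contradicting that $\{U_{\alpha}\}$ was a base with the $U_{\alpha_{\xi}}$ cofinal. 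I would organize this so that the contradiction is: $\{V_{\beta}:\beta\in\omega^{\omega}\}$ being an $\omega^{\omega}$-base and $\sigma$-directed forces the chain of open subgroups to have a $\subseteq$-smallest element among a cofinal subfamily, i.e. forces countable cofinality, contradicting the first step.

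Once $G$ is shown to be metrizable, the conclusion that $G$ has a suitable set is immediate from \cite[Theorem 6.6]{CM} (the corollary stated just above), so nothing further is needed there.

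\medskip
The step I expect to be the main obstacle is the cardinal-arithmetic heart of the contradiction: making precise exactly how the $\sigma$-directedness of $\omega^{\omega}$ is incompatible with a base consisting of a linearly ordered chain of uncountable cofinality. A cleaner route, which I would try first, is to avoid re-proving this by hand and instead quote \cite[Theorem 1.3]{GKL2015} or \cite[Corollary 3.11]{GKL2015} (used elsewhere in this section): if those results say that a topological group with an $\omega^{\omega}$-base is either metrizable or contains a (submetrizable, $k_{\omega}$, or at least non-metrizable in a controlled way) open subgroup structure incompatible with being orderable, then the proof collapses to: a non-metrizable orderable group has a chain of open subgroups as its local base, but an $\omega^{\omega}$-base forces a first-countable (hence metrizable) local structure at $e$ because the relevant completeness/calibre condition $\mathfrak{q}$-like property of $\omega^{\omega}$-bases rules out uncountable-cofinality chains. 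So the real work is locating the right lemma from \cite{GKL2015} or \cite{NP1} and checking it applies; the rest is bookkeeping.
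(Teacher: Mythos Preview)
Your overall framework matches the paper's: assume $G$ is not metrizable, invoke Nyikos--Reichel to obtain a strictly decreasing chain $\{H_\alpha:\alpha<\omega_1\}$ of clopen subgroups forming a local base at $e$, and seek a contradiction with the $\omega^\omega$-base. The divergence is in how the contradiction is produced, and here your mechanism has a genuine gap. The purely order-theoretic claim you aim for---that $\sigma$-directedness of $(\omega^\omega,\le)$ is incompatible with the neighbourhood filter having a well-ordered base of uncountable cofinality---is not a ZFC theorem. Under $\mathfrak{d}=\omega_1$ one can build a monotone cofinal map $f:\omega^\omega\to\omega_1$ (fix a dominating family $\{\delta_\alpha:\alpha<\omega_1\}$ and set $f(\gamma)=\min\{\alpha:\gamma\le\delta_\alpha\}$); then $V_\beta:=H_{f(\beta)}$ is a perfectly good $\omega^\omega$-base for the \emph{same} neighbourhood filter, and no contradiction appears from cofinality alone. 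Your suggested fallback of quoting \cite{GKL2015} will not rescue this either, since the results used from that paper elsewhere in this section concern $k$-space, $\sigma$-compact, or precompactness hypotheses, not chain bases.

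The paper's route is genuinely different and uses orderability beyond the mere existence of a chain base. It invokes Banakh's theorem \cite[Theorem~6.4.1]{B2017} that an $\omega^\omega$-base yields a \emph{countable} cs$^*$-network $\{P_n:n\in\mathbb{N}\}$ at $e$, and then \cite[Theorem~2.2.4]{MD} to produce a non-trivial convergent sequence $a_n\to e$ in the orderable space $G$. A pigeonhole finishes: for each $\alpha<\omega_1$ some $P_{n(\alpha)}\subset H_\alpha$ contains infinitely many $a_n$; a single $P_{n_0}$ then lies inside uncountably many---hence, by monotonicity of the chain, all---$H_\alpha$, forcing $P_{n_0}\subset\bigcap_\alpha H_\alpha=\{e\}$, contradicting that $P_{n_0}$ contains $a_n\neq e$. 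The two ingredients you are missing are precisely the countable cs$^*$-network and the convergent sequence; neither is visible from a Tukey-type analysis of $\omega^\omega$ versus $\omega_1$.
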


\begin{proof}
Let $G$ be a linearly orderable topological group with an $\omega^{\omega}$-base. Now assume that $G$ is not metrizable. Indeed, since $G$ has an $\omega^{\omega}$-base, it follows from \cite[Theorem 6]{NR} that $G$ has a totally ordered local base $\{H_{\alpha}: \alpha<\omega_{1}\}$ consisting of clopen subgroups, where we may assume that $H_{\alpha}\neq H_{\beta}$ for any distinct $\alpha, \beta\in\omega_{1}$. Moreover, $G$ is an infinite space. Since $G$ has an $\omega^{\omega}$-base, it follows that $G$ has a countable cs$^{\ast}$-network $\{P_{n}: n\in\mathbb{N}\}$ at $e$ by \cite[Theorem 6.4.1]{B2017}. By \cite[Theorem 2.2.4]{MD}, each infinite orderable space contains a non-trivial convergent sequence. Take any non-trivial convergent sequence $\{a_{n}: n\in\mathbb{N}\}$ which converges to $e$, where $a_{n}\neq a_{m}$ for distinct $n, m\in\mathbb{N}$. Then there exist $n_{0}\in\mathbb{N}$ and a uncountable subset $A\subset \omega_{1}$ such that $P_{n_{0}}\subset H_{\alpha}$ for each $\alpha\in A$ and $P_{n_{0}}$ contains a non-trivial subsequence of $\{a_{n}: n\in\mathbb{N}\}$. Clearly, we have $\bigcap_{\alpha\in A}H_{\alpha}=\{e\}$, which implies $P_{n_{0}}=\{e\}$. This is a contradiction. Therefore, $G$ is metrizable. Thus it has a suitable set by \cite[Theorem 6.6]{CM}.
\end{proof}

However, the following question is still unknown for us.

\begin{question}
If $G$ is a generalized orderable topological group with an $\omega^{\omega}$-base, does $G$
have a suitable set?
\end{question}

Finally, the following two theorems show that some particular topological group with an $\omega^{\omega}$-base has a suitable set.

\begin{theorem}
Let $X$ be a Tychonoff space. If $C_{p}(X)$ has an $\omega^{\omega}$-base, then $C_{p}(X)$ has a suitable set.
\end{theorem}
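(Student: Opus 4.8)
The plan is to use how strongly an $\omega^{\omega}$-base constrains the structure of $C_{p}(X)$ in order to reduce to a situation already handled in this paper. Three elementary facts set the stage: with pointwise addition $C_{p}(X)$ is an abelian topological group; it is metrizable if and only if $X$ is countable; and (for $X\neq\emptyset$) it is never precompact, since the constant functions form a closed subgroup topologically isomorphic to $\mathbb{R}$. Consequently, if one shows that the hypothesis forces $X$ to be countable, then $C_{p}(X)\subseteq\mathbb{R}^{X}\cong\mathbb{R}^{\omega}$ is a separable metrizable topological group, and \cite[Theorem 6.6]{CM} gives a suitable set at once; indeed, being separable, metrizable and non-precompact, $C_{p}(X)$ then even has a \emph{closed} suitable set by the theorem on separable topological groups with an $\omega^{\omega}$-base proved above. (The case $X=\emptyset$ is trivial.)

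So the first step I would take is to invoke the known characterization of the Tychonoff spaces $X$ for which $C_{p}(X)$ carries an $\omega^{\omega}$-base: this holds precisely when $X$ is countable. One direction is immediate — if $X$ is countable then $C_{p}(X)$ is a subspace of the metrizable space $\mathbb{R}^{\omega}$, hence first countable, hence has an $\omega^{\omega}$-base — and the reverse direction is the one used here. Applying it to our $C_{p}(X)$ yields that $X$ is countable, and then the reduction above finishes the proof.

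An alternative, should one prefer not to quote that characterization verbatim, is to route through the earlier theorem on $k$-spaces with an $\omega^{\omega}$-base: any proof that an $\omega^{\omega}$-base makes $C_{p}(X)$ a $k$-space would suffice, because $C_{p}(X)$ is connected and hence has no proper open subgroup, so the option ``$C_{p}(X)$ contains a submetrizable open $k_{\omega}$-subgroup'' would force $C_{p}(X)$ itself to be $\sigma$-compact; but a $\sigma$-compact $C_{p}(X)$ has $X$ finite, contradicting that $C_{p}(X)$ is infinite, so one again lands in the metrizable case.

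The substantive content — and thus the expected main obstacle — is exactly the structural reduction: passing from ``$C_{p}(X)$ has an $\omega^{\omega}$-base'' to countability of $X$ (or at least to separability of $C_{p}(X)$, which would already suffice via the separable $\omega^{\omega}$-base theorem). If the characterization of $C_{p}(X)$-spaces with an $\omega^{\omega}$-base may be cited, the argument is only a few lines; if it is to be argued directly, the heart of the matter is a Tukey/cardinal-invariant estimate showing that an $\omega^{\omega}$-indexed neighbourhood base at $0$ cannot be cofinal among the standard basic neighbourhoods — which, up to cofinal equivalence, are indexed by the directed family of finite subsets of $X$ — once $X$ is uncountable, with $|X|=\omega_{1}$ the delicate case.
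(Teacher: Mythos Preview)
Your proposal is correct and follows essentially the same route as the paper: the paper's proof consists of citing \cite[Proposition~4.3]{GKL2015} (precisely the characterization you invoke, that $C_{p}(X)$ has an $\omega^{\omega}$-base iff $X$ is countable, hence $C_{p}(X)$ is metrizable) together with \cite[Theorem~6.6]{CM}. Your additional remarks about non-precompactness, closed suitable sets, and the alternative $k$-space route are sound but go beyond what the paper records.
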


\begin{proof}
By \cite[Proposition 4.3]{GKL2015} and \cite[Theorem 6.6]{CM}, the result is desired.
\end{proof}

We say that a topological group $G$ is {\it MAP} iff the family of continuous homomorphisms to
unitary groups separate its points.

\begin{theorem}
If $G$ is a MAP Abelian group with an $\omega^{\omega}$-base, then $G$ has a suitable set.
\end{theorem}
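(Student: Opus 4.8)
The plan is to combine the known structure theory of MAP Abelian groups with $\omega^\omega$-bases with the main dichotomy for $k$-spaces (or metrizability) already invoked in this section, and then to reduce the general case to a countable precompact piece where \cite[Theorem~2.2]{CM} applies directly. First I would recall that, by \cite[Corollary~3.13]{GKL2015}, a topological group with an $\omega^\omega$-base that is a $k$-space is either metrizable or contains a submetrizable open $k_\omega$-subgroup; if $G$ happens to be a $k$-space we are done by the first theorem of this section. So the real content is the non-$k$-space case, and the MAP hypothesis should be what saves us there: being MAP forces $G$ to embed (as a topological group) into a product of unitary groups, and Abelianness refines this to an embedding into a power of the circle group $\mathbb{T}$. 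The key structural input I would use is that an Abelian topological group with an $\omega^\omega$-base has its Bohr compactification (equivalently the closure of the image in $\mathbb{T}^\kappa$) metrizable, since $\omega^\omega$-bases are inherited by subgroups and a \emph{compact} group with an $\omega^\omega$-base is metrizable by \cite[Corollary~3.11]{GKL2015}.

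Granting that, here is the route I would take. Let $b\colon G\to bG$ be the Bohr compactification. Because $G$ is MAP, $b$ is injective, and because $G$ is Abelian with an $\omega^\omega$-base, $bG$ is a compact metrizable (hence second countable) Abelian group. Now $G$, carrying a topology finer than the subspace topology from $bG$ and still admitting an $\omega^\omega$-base, is separable: its image $b(G)$ is dense in the second-countable space $bG$, so $b(G)$ is separable, and a finer topology with an $\omega^\omega$-base does not destroy separability here because an $\omega^\omega$-base guarantees countable tightness and, together with the countable network weight coming from $bG$, forces $nw(G)\le\omega$. At that point $G$ has a countable network, and \cite[Corollary~3.10]{DTT} (already used twice above) gives a suitable set. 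Alternatively, if one prefers to stay with the separable-group machinery: once $G$ is separable with an $\omega^\omega$-base, the theorem on separable groups with an $\omega^\omega$-base proved earlier in this section applies verbatim and yields a suitable set (a closed one when $G$ is not precompact).

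The step I expect to be the genuine obstacle is showing that MAP plus Abelian plus an $\omega^\omega$-base actually forces a \emph{metrizable} (or at least second-countable) Bohr compactification, and then that this pulls back to give $G$ a countable network. The first half is where one must be careful: $\omega^\omega$-bases pass to subgroups and to continuous images only under hypotheses (e.g. open maps, or the target being again nice), so one cannot naively say ``$bG$ has an $\omega^\omega$-base.'' The correct argument is that the \emph{weight} of $bG$ equals the minimal cardinality of a separating family of characters of $G$, and having an $\omega^\omega$-base bounds this by $\mathfrak{d}\le\mathfrak{c}$ at worst; to get it down to $\omega$ one invokes that a compact group of uncountable weight cannot receive a continuous injection from a group with an $\omega^\omega$-base without that group failing to be Lindel\"of, combined with the fact that $\omega^\omega$-based groups are Lindel\"of (indeed $\aleph_0$-bounded-type arguments via \cite[Theorem~1.3]{GKL2015} in the $\sigma$-compact case, and a Baire-category / cofinal-$\omega^\omega$ argument in general). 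Once second countability of $bG$ is secured, the pullback of a countable network is routine, and the appeal to \cite[Corollary~3.10]{DTT} closes the proof.
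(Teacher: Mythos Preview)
Your proposal has a genuine gap. The paper's proof is a one-line citation: \cite[Corollary~5.6]{GKL2015} shows directly that a MAP Abelian group with an $\omega^{\omega}$-base is metrizable, and then \cite[Theorem~6.6]{CM} finishes. You instead try to argue through the Bohr compactification, and the central claim you rely on---that for an Abelian group $G$ with an $\omega^{\omega}$-base the Bohr compactification $bG$ is metrizable---is false. The justification you offer (``$\omega^{\omega}$-bases are inherited by subgroups and a compact group with an $\omega^{\omega}$-base is metrizable'') is a non sequitur: $bG$ is not a subgroup of $G$, and $\omega^{\omega}$-bases do not pass along continuous homomorphisms in general. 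A concrete counterexample is $G=\mathbb{Z}$ with the discrete topology: it is MAP, Abelian, and trivially has an $\omega^{\omega}$-base, yet $b\mathbb{Z}$ has weight $\mathfrak{c}$ and is not metrizable. You acknowledge this step as the ``genuine obstacle,'' and your fallback sketch (bounding the weight of $bG$ via separating characters and a Lindel\"of argument) does not repair it, since the same example shows the weight need not drop to~$\omega$.

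Even if $bG$ were second countable, your next move fails too: you assert that a countable network on $b(G)$ (in the Bohr topology) combined with an $\omega^{\omega}$-base on $G$ forces $nw(G)\le\omega$. But the original topology on $G$ is finer than the one inherited from $bG$, and a countable network for the coarser topology says nothing about network weight in the finer one; countable tightness does not bridge this gap. In short, the Bohr-compactification route does not lead to a proof here. The correct path is the metrizability result from \cite{GKL2015}, whose proof uses the locally quasi-convex structure available in the MAP Abelian setting rather than anything about $bG$ being small.
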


\begin{proof}
By \cite[Corollary 5.6]{GKL2015} and \cite[Theorem 6.6]{CM}, the result is desired.
\end{proof}

\bigskip
\section{The cardinal invariant of topological groups with an suitable set}
In this section, we mainly consider the following two questions about the the density and cardinality of closed suitable set. First, we give a notation.

For $G\in\mathscr{S}_{c}$ denote by $s_{c}(G)$ the minimum
cardinality of a closed suitable set for $G$.

\begin{question}
If $G\in\mathscr{S}_{c}$, under what conditions we have $d(G)= s_{c}(G)+\aleph_{0}$?
\end{question}

\begin{question}\label{q1}\cite[Question 1.6]{DTA}
Can the implication $G\in\mathscr{S}_{c}\Rightarrow d(G)\leq s_{c}(G)+\aleph_{0}$ be strengthened?
\end{question}

\begin{theorem}\label{t1}
If $G$ is a topological group with $G\in\mathscr{S}_{c}$ and $d(G)<b(G)$, then $d(G)= s_{c}(G)+\aleph_{0}$.
\end{theorem}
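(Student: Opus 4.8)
The plan is to prove the two inequalities $d(G)\le s_{c}(G)+\aleph_{0}$ and $s_{c}(G)+\aleph_{0}\le d(G)$ separately; only the second will use the hypothesis $d(G)<b(G)$. The first is exactly the implication recorded in Question~\ref{q1} and is immediate: for \emph{any} closed suitable set $S$ of $G$ the subgroup $\langle S\rangle$ is dense and its elements are finite words over $S\cup S^{-1}$, so $|\langle S\rangle|\le |S|+\aleph_{0}$ and hence $d(G)\le|\langle S\rangle|\le |S|+\aleph_{0}$; taking the infimum over all closed suitable sets gives $d(G)\le s_{c}(G)+\aleph_{0}$. Note this uses neither $d(G)<b(G)$ nor anything about $G\in\mathscr{S}_{c}$ beyond the existence of one closed suitable set.

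For the reverse inequality we may assume $G$ is infinite (the finite case is trivial or vacuous under the hypothesis), so $d(G)\ge\aleph_{0}$; it then suffices to produce a closed suitable set of cardinality at most $d(G)$, since that yields $s_{c}(G)\le d(G)$ and hence $s_{c}(G)+\aleph_{0}\le d(G)$. The hypothesis $d(G)<b(G)$ enters through the following consequence of the definition of the boundedness number: there is a neighbourhood $N$ of $e$ such that $G$ is not the union of fewer than $d(G)$ translates of $N$. (In particular $G$ is not precompact, since an infinite precompact group would force $b(G)\le\aleph_{0}\le d(G)$.) Fix such an $N$ together with a symmetric neighbourhood $N'$ of $e$ with $N'N'\subseteq N$.

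I would then build the desired set by transfinite recursion of length $\kappa=d(G)$. Fix a dense subset $D=\{d_{\alpha}:\alpha<\kappa\}$ of $G$ in which each point is listed cofinally often. At stage $\alpha$, with $\{s_{\beta}:\beta<\alpha\}$ already chosen, the set $A_{\alpha}:=\{s_{\beta}:\beta<\alpha\}\cup\{e\}$ has cardinality $|\alpha|+1<\kappa=d(G)$, so $A_{\alpha}N$ is a union of fewer than $d(G)$ translates of $N$ and hence, by the choice of $N$, a proper subset of $G$. One chooses $s_{\alpha}\in G\setminus A_{\alpha}N$ that is at the same time close enough to $d_{\alpha}$ (more precisely, to a suitable translate of $d_{\alpha}$ by a word in the previously chosen $s_{\beta}$) that $d_{\alpha}\in\overline{\langle S\rangle}$ is forced in the end; since each point of $D$ is listed cofinally often, it is enough that this succeeds for a cofinal set of stages, whence $D\subseteq\overline{\langle S\rangle}$ and $\langle S\rangle$ is dense. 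The condition $s_{\alpha}\notin A_{\alpha}N$ makes the family $\{s_{\alpha}N':\alpha<\kappa\}\cup\{N'\}$ pairwise disjoint, and a uniformly discrete set of this form is automatically closed and discrete (if $p\neq e$ then a small translate of $N'$ about $p$ meets $S$ in at most one point, and a neighbourhood of $e$ misses $S$ entirely). Thus $S=\{s_{\alpha}:\alpha<\kappa\}$ is a closed suitable set with $|S|\le\kappa=d(G)$, as required.

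The main obstacle is the clash inside the recursion between keeping $\langle S\rangle$ dense (which pulls $s_{\alpha}$ toward $d_{\alpha}$) and keeping $S\cup\{e\}$ closed and discrete (which pushes $s_{\alpha}$ out of the region $A_{\alpha}N$): a union of fewer than $d(G)$ translates of $N$, although it does not cover $G$, can easily cover a small neighbourhood of $d_{\alpha}$, so one cannot simply demand that $s_{\alpha}$ be extremely close to $d_{\alpha}$. Resolving this is the technical heart — one uses homogeneity to translate the obstruction, approximates $d_{\alpha}$ on a scale comparable to $N$ rather than on an arbitrarily fine scale, and handles the cases $d_{\alpha}=e$ and $d_{\alpha}$ an accumulation point of $D$ separately. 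An alternative, and probably shorter, route is to invoke directly a Comfort--Morris / Dikranjan--Tkachenko--type lemma (cf.\ \cite{CM}, \cite{DTT}, \cite{ST2002}) that, for a non-precompact group with $d(G)<b(G)$, already supplies a closed suitable set of size $d(G)$, and to combine it with the easy first inequality above.
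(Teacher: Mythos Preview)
Your first inequality is fine and matches the paper. The difficulty is entirely in the construction of a closed suitable set of size at most $\kappa=d(G)$, and here your proposal has a genuine gap: you correctly identify the tension in the recursion (the requirement $s_{\alpha}\notin A_{\alpha}N$ is a ``large scale'' constraint that can easily swallow any prescribed small neighbourhood of $d_{\alpha}$), but you do not actually resolve it. The phrases ``close enough to a suitable translate of $d_{\alpha}$ by a word in the previously chosen $s_{\beta}$'' and ``approximate $d_{\alpha}$ on a scale comparable to $N$'' are gestures, not arguments; nothing in what you wrote guarantees that at cofinally many stages the region $G\setminus A_{\alpha}N$ meets the small set you need to hit. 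The fallback of citing a ready-made lemma is not a proof either unless you name the lemma and check its hypotheses.

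The paper avoids this tension altogether by \emph{decoupling} discreteness from density. First it builds, by a straightforward recursion using only $d(G)<b(G)$, a family $\{b_{\gamma}:\gamma<\kappa\}$ with $b_{\gamma}\notin\bigcup_{\alpha<\gamma}b_{\alpha}U$; choosing a symmetric $V$ with $V^{4}\subset U$, one gets that $\{b_{\gamma}V:\gamma<\kappa\}$ is \emph{uniformly} discrete (each $gV$ meets at most one of them). No density is attempted at this stage. Density is then obtained \emph{algebraically}: pick any dense subset $\{c_{\gamma}:\gamma<\kappa\}$ of $V$ (possible since $d(V)\le\kappa$) and set
\[
S=\{b_{\gamma}:\gamma<\kappa\}\cup\{b_{\gamma}c_{\gamma}:\gamma<\kappa\}.
\]
Because each $b_{\gamma}V$ contains at most the two points $b_{\gamma},\,b_{\gamma}c_{\gamma}$ of $S$, the set $S$ is closed discrete; and since $b_{\gamma}^{-1}(b_{\gamma}c_{\gamma})=c_{\gamma}$, the subgroup $\langle S\rangle$ contains the dense subset $\{c_{\gamma}\}$ of $V$, so $\overline{\langle S\rangle}$ contains $V$ and all $b_{\gamma}$, hence the open subgroup $H$ they generate. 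One then finishes by adjoining one point from each coset of $H$ (there are at most $\kappa$ cosets since $H$ is open and $d(G)=\kappa$), obtaining a closed suitable set of size $\le\kappa$. This ``pair trick'' $b_{\gamma}\mapsto(b_{\gamma},b_{\gamma}c_{\gamma})$ is the idea your sketch is missing; it removes the clash entirely, so no delicate interleaving is needed.
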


\begin{proof}
Assume that $d(G)=\kappa$. Since $d(G)<b(G)$, there exists a nonempty open subset $U$ of $G$ no $A\subset G$ with $|A|<\kappa$ satisfies $G=AU$. Therefore, by induction, we can find a set $\{b_{\gamma}: \gamma<\kappa\}\subset G$ such that $b_{\gamma}\not\in\bigcup_{\alpha<\gamma}b_{\alpha}U$ for each $\gamma<\kappa$. Now take an open neighborhood $V$ of the neutral element in $G$ such that $V=V^{-1}$ and $V^{4}\subset U$. From \cite[Lemma 1.4.22]{A2008}, it follows that $\{b_{\gamma}V: \gamma<\kappa\}$ is a disjoint family such that, for any $g\in G$, the open set $gV$ meets at most one of the sets $b_{\gamma}V$. Since $V$ is open and $d(G)=\kappa$, we conclude that $d(V)\leq\kappa$, hence there is a dense subset $\{c_{\gamma}: \gamma<\kappa\}$ of $V$. Put $$S=\{b_{\gamma}: \gamma<\kappa\}\cup\{b_{\gamma}c_{\gamma}: \gamma<\kappa\}$$ and let $H$ be the open subgroup of $G$ generated by $V\cup\{b_{\gamma}V: \gamma<\kappa\}$. It easy to see that $S$ is closed discrete and $\langle S\rangle$ is dense in $H$. Let $A$ select one point from each coset of $H$ in $G$. Clearly, $|A|\leq\kappa$ since $d(G)=\kappa$. Then $S\cup A$ is a closed suitable for $G$ and $|S\cup A|\leq\kappa$, which shows $s_{c}(G)\leq\kappa$. Therefore, we have $d(G)= s_{c}(G)+\aleph_{0}$.
\end{proof}

\begin{theorem}\label{t2}
If $G$ is a non-pseudocompact separable topological group with $G\in\mathscr{S}_{c}$, then $s_{c}(G)\leq\aleph_{0}$, that is, $d(G)=s_{c}(G)+\aleph_{0}$.
\end{theorem}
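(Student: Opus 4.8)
The plan is to produce a \emph{countable} closed suitable set for $G$. Since $G$ is separable we have $d(G)=\aleph_0$, so once such a set is found we get $s_c(G)\le\aleph_0$ and hence $d(G)=\aleph_0=s_c(G)+\aleph_0$. The model to imitate is the proof of Theorem~\ref{t1}: there the hypothesis $d(G)<b(G)$ was used only to extract, for an appropriate neighborhood $U$ of $e$, an infinite set $\{b_\gamma\}$ with $b_\gamma\notin\bigcup_{\alpha<\gamma}b_\alpha U$ together with a symmetric neighborhood $V$ with $V^4\subseteq U$, so that the translates $b_\gamma V$ are pairwise disjoint and every $gV$ meets at most one of them. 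I would replace that device by non-pseudocompactness: produce a symmetric open neighborhood $V$ of $e$ and a sequence $\{b_n:n\in\omega\}$ in $G$ with exactly this separation property. Granting this, the rest of the argument of Theorem~\ref{t1} runs verbatim with $\kappa=\aleph_0$: $V$ is open in the separable group $G$, so $d(V)\le\aleph_0$ and one fixes a countable dense set $\{c_n:n\in\omega\}$ of $V$; then $S=\{b_n:n\in\omega\}\cup\{b_nc_n:n\in\omega\}$ is a countable closed discrete subset whose generated subgroup is dense in the open subgroup $H$ generated by $V\cup\bigcup_n b_nV$; and since $d(G)=\aleph_0$ there are at most countably many cosets of $H$ in $G$, so adjoining one point from each of them to $S$ yields a countable closed suitable set for $G$.

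Thus everything reduces to producing $V$ and $\{b_n\}$ from non-pseudocompactness. When $G$ is \emph{not precompact} this is straightforward, in the spirit of \cite[Corollary 5.8]{CM}: choose a symmetric open neighborhood $U$ of $e$ that is not covered by finitely many left translates, take $V$ with $V^4\subseteq U$, and recursively pick $b_n\notin\bigcup_{m<n}b_mU$; the separation property of $\{b_nV\}$ then follows exactly as in Theorem~\ref{t1}. The real obstacle is that a precompact group can still fail to be pseudocompact, and a precompact group admits no infinite family of pairwise disjoint translates of any fixed neighborhood, so the direct construction breaks down. In that case the plan is to pass to a quotient: by the Comfort--Ross theorem a precompact non-pseudocompact group $G$ is not $G_\delta$-dense in its Raikov completion $K$, so there is a closed $G_\delta$ subgroup $N_0$ of $K$ with $G\cdot N_0\ne K$, and the restriction to $G$ of the quotient map $K\to K/N_0$ is a continuous open homomorphism $q\colon G\to G'$ onto a proper dense subgroup $G'$ of the compact \emph{metrizable} group $K/N_0$. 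Then $G'$ is separable (a continuous image of $G$), metrizable and non-compact, hence Lindel\"of and non-compact, so $G'\in\mathscr{S}_c$ by Lemma~\ref{t0}(b), and by the argument of Lemma~\ref{t0}(a) every closed suitable set of the Lindel\"of group $G'$ is countable.

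It then remains to transfer a countable closed suitable set $S'$ of $G'$ back to $G$. Lifting a transversal $\widetilde S$ of $q$ over $S'$ keeps $\widetilde S$ closed and discrete in $G$ (using that $S'$ is closed discrete in $G'$, that $q$ is open, and that distinct points of $S'$ have pairwise disjoint saturated neighborhoods), and $q(\langle\widetilde S\rangle)=\langle S'\rangle$ is dense in $G'$. The delicate point, which I expect to be the main remaining difficulty, is to guarantee density of the generated subgroup also \emph{along} the kernel $N=\ker q$, a closed $G_\delta$ subgroup of $G$: for this one should enlarge $\widetilde S$ by a countably indexed closed discrete set whose generated subgroup meets every basic neighborhood of every point of $N$, exploiting the separability of $G$ together with the $G_\delta$-character of $N$ (alternatively, iterating the quotient construction down a short chain of closed $G_\delta$ subgroups). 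Modulo this bookkeeping, combining the two cases delivers a countable closed suitable set and hence $s_c(G)\le\aleph_0$, as required.
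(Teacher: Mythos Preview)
Your case split (precompact vs.\ not) and your treatment of the non-precompact case coincide with the paper's: the paper observes that non-precompactness is exactly $\omega<b(G)$, and then invokes Theorem~\ref{t1} with $\kappa=\aleph_0$, which is precisely the construction you spell out.

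The gap is in the precompact case. The paper does not construct anything there; it simply quotes \cite[Lemma~3.5]{DTT}, which asserts outright that a separable, precompact, non-pseudocompact group has a countable closed suitable set. What you sketch is an attempt to reprove that lemma, and the part you yourself flag as ``the delicate point'' is exactly the substantive content: lifting a countable closed suitable set $S'$ of the metrizable quotient $G'$ gives you density only modulo the kernel $N=\ker q$, and you have not shown how to manufacture a countable closed discrete set whose span is dense in $N$ (or, equivalently, whose span together with $\langle\widetilde S\rangle$ is dense in $G$). Separability of $G$ does not pass to the closed subgroup $N$, and ``$G_\delta$-character of $N$'' by itself gives no handle on generating $N$ densely; your fallback of ``iterating the quotient construction down a short chain'' needs a termination argument (why does the process end after finitely or countably many steps, and what happens if at some stage the kernel becomes pseudocompact?). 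As written, the precompact branch is a plan rather than a proof, so the proposal is incomplete exactly where the paper leans on the external reference.
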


\begin{proof}
If $\omega<b(G)$, then $d(G)= s_{c}(G)+\aleph_{0}$ by Theorem~\ref{t1}, thus $s_{c}(G)\leq\aleph_{0}$. Assume that $b(G)\leq\omega$, then $G$ is precompact. Since $G$ is separable, non-pseudocompact and precompact, it follows from \cite[Lemma 3.5]{DTT} that $G$ has a countable closed suitable set, hence $s_{c}(G)\leq\aleph_{0}$.
\end{proof}

\begin{example}
There exists a non-pseudocompact separable topological group $G$ such that $G\in\mathscr{S}_{c}$, but $e(G)>\omega$.
\end{example}

\begin{proof}
Let $X$ be a uncountable separable, non-pseudocompact space with a closed discrete subset $A$ such that $|A|=|X|$. For example, we can take $X=\mathbb{S}^{2}$, where $\mathbb{S}$ is the Sorgenfrey line. Then $G=F(X)\times F(X)$ is a separable topological group with a closed discrete subset $A\times A$ with $|A\times A|=|G|$. Clearly, $G$ is separable and non-pseudocompact by \cite[Lemma 7.5.2]{A2008}. Moreover, by \cite[Lemma 3.1]{DTT}, $G$ has a closed suitable set $S$ such that $\langle S \rangle=G$. Then it follows from Theorem~\ref{t2} that $s_{c}(G)\leq\aleph_{0}$. However, it is obvious that $e(G)>\omega$.
\end{proof}

\begin{theorem}
For each maximal topological group $G$, we have $d(G)=s_{c}(G)+\aleph_{0}$.
\end{theorem}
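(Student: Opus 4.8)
The plan is to show that $d(G)$ is squeezed between $s_{c}(G)+\aleph_{0}$ from both sides. One direction is the well-known implication $G\in\mathscr{S}_{c}\Rightarrow d(G)\leq s_{c}(G)+\aleph_{0}$ recorded in Question~\ref{q1}: for any closed suitable set $S$ the dense subgroup $\langle S\rangle$ has cardinality at most $|S|+\aleph_{0}$, so $d(G)\leq |\langle S\rangle|\leq |S|+\aleph_{0}$, and minimizing over $S$ gives the inequality. Since a maximal topological group is by definition a space without isolated points, $G$ is infinite and $d(G)\geq\aleph_{0}$. Hence it will suffice to prove the reverse estimate $s_{c}(G)\leq d(G)$: once we have it, $d(G)\leq s_{c}(G)+\aleph_{0}\leq d(G)+\aleph_{0}=d(G)$ forces $d(G)=s_{c}(G)+\aleph_{0}$.

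To obtain $s_{c}(G)\leq d(G)$ I would re-examine the construction used in the proof of Theorem~\ref{t3}. As there, $G$ has an open countable Boolean subgroup $H$ by \cite{Mal}; since $H$ is open in a space without isolated points, $H$ itself has no isolated points, so $H$ is countably infinite and $d(H)=\aleph_{0}$. Because $H$ is open, the left cosets of $H$ partition $G$ into clopen pieces, so any dense subset of $G$ must meet each coset, giving $d(G)\geq |G/H|$; conversely, a union of translates of a fixed countable dense subset of $H$, one per coset, is dense in $G$, giving $d(G)\leq |G/H|\cdot d(H)$. Combining these with $d(H)=\aleph_{0}$ yields the cardinal identity $d(G)=|G/H|+\aleph_{0}$. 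Now the closed suitable set exhibited in Theorem~\ref{t3} is $A\cup S$, where $S$ is a closed suitable set for the countable group $H$ (so $|S|\leq\aleph_{0}$) and $A$ selects one point from each coset of $H$ contained in $G\setminus H$ (so $|A|\leq |G/H|$). Therefore $s_{c}(G)\leq |A\cup S|\leq |G/H|+\aleph_{0}=d(G)$, as required, and the theorem follows.

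I do not anticipate a real obstacle here. The only steps needing care are the verification of the cardinal identity $d(G)=|G/H|+\aleph_{0}$ for an open countable subgroup $H$ (routine, using that cosets of an open subgroup are clopen and that translations are homeomorphisms) and the bookkeeping that the suitable set from Theorem~\ref{t3} has cardinality bounded by $d(G)$ rather than by something larger. One could alternatively split into cases and apply Theorem~\ref{t1} when $d(G)<b(G)$, but the argument above handles all maximal groups uniformly without that dichotomy.
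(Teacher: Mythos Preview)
Your proposal is correct and follows essentially the same approach as the paper: both use the closed suitable set $A\cup S$ from Theorem~\ref{t3}, observe that $|S|\leq\aleph_{0}$ and that $|A|\leq d(G)$ (because the cosets of the open subgroup $H$ are clopen and must each meet any dense set), and combine this with the general inequality $d(G)\leq s_{c}(G)+\aleph_{0}$. The paper's proof is simply the one-line version of your argument, asserting $|A|\leq d(G)$ without spelling out the coset reasoning you supply.
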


\begin{proof}
By the proof of Theorem~\ref{t3}, we have $|A|\leq d(G)$, hence $d(G)=s_{c}(G)+\aleph_{0}$.
\end{proof}

\bigskip

\end{document}